\documentclass[12pt, reqno]{amsart}
\setlength{\textheight}{22cm}
\setlength{\textwidth}{16cm}
\setlength{\oddsidemargin}{0cm}
\setlength{\evensidemargin}{0cm}
\setlength{\topmargin}{0cm}

\usepackage{amsmath}

\newtheorem{theor}{\indent\sc Theorem}[section]
\newtheorem{corol}[theor]{\indent\sc Corollary}
\newtheorem{lemma}[theor]{\indent\sc Lemma}   
\newtheorem{prop}[theor]{\indent\sc Proposition}

\newtheorem{dfn}[theor]{\indent\sc Definition}

\newtheorem{example}[theor]{\indent\sc Example}

\newcommand{\reals }{{\mathbb R}}
\newcommand{\complex }{{\mathbb C}}
\newcommand{\lorentz }{{\mathbb L}}

\newcommand{\mink}{{\mathbb R}_{1}^{4}}

\newcommand{\hyper}{{\mathbb H}^{3}}

\newcommand{\lpr}[2]{{{\langle}{#1},{#2}{\rangle}}} 

\newcommand{\nlight}[1]{(1+{#1}{\overline{{#1}}},{#1}+{\overline{{#1}}},
-i({#1}-{\overline{{#1}}}),-1+{#1}{\overline{{#1}}})} 

\newcommand{\slight}[1]{(1+{#1}{\overline{{#1}}},{#1}+{\overline{{#1}}},
-i({#1}-{\overline{{#1}}}),1-{#1}{\overline{{#1}}})}

\newcommand{\weis}[2]{({#1}+{#2},1+{#1}{#2},i(1-{#1}{#2}),{#1}-{#2})}

\newcommand {\C}{{\mathcal C}}

\newcommand {\X}{{\mathfrak X}}

\newcommand {\Log}{\mbox{Log}\;}

\begin{document}
 \baselineskip=1.0\baselineskip
  
\title[]{Some Partial Differential Equations and conformal surfaces of the 4-dimensional Minkowski space}
\author[]{M.P. Dussan{$^1$}, \ \  A.P. Franco Filho, \ \ P. Sim\~oes}  

\footnote{Corresponding author.

 E-mail addresses: dussan@ime.usp.br (M.P. Dussan), apadua@ime.usp.br (A.P. Franco Filho)}
\maketitle

\centerline{\small{ Universidade de S\~ao Paulo, Departamento de
Matem\'atica - IME}}
\centerline{\small{CEP: 05508-090. S\~ao Paulo. Brazil}}

\begin{abstract}
This paper introduces a complex representation for spacelike surfaces in the Lorentz-Minkowski space $\mathbb L^4$, 
based in two complex valued functions which can be assumed to be holomorphic or anti-holomorphic. 
When the immersion is contained in quadrics of $\mathbb L^4$, the representation then allows us to obtain 
interesting partial differential equations with holomorphic or anti-holomorphic parameters, within which we find the partial 
Riccati Equation. Using then theory of holomorphic complex functions we construct explicitly new local solutions for those PDEs 
together with its associated geometric solutions. So, several explicit examples are given. 
As geometric consequence, through of our approach we characterize all conformal totally umbilical spacelike immersions into $\mathbb L^4$, and in addition, 
we also show that for each conformal immersion in $\mathbb L^4$ which satisfies the partial Riccati equation there exists a 
Bryant immersion in $\mathbb H^3$, both immersions being congruent by a translation vector. 
\end{abstract}

\vspace{0.2cm}
Keywords: Spacelike surfaces; Light cone; Lorentz-Minkowski space;  Riccati's Equation

\vspace{0.05cm}
MSC: 53C50; 53C42; 53B30; 53A35; 30D60; 34A26

\section{Introduction}
There are several results involving the theory of spacelike surfaces in the Lorentz-Minkowski space $\mathbb L^4$ and in particular, 
those involving the geometry of spacelike surfaces through of the  future light cone, which is very rich and important. 
The results found in the literature came through from different viewpoints, see \cite{L}, \cite{LJ}, \cite{LUY}, \cite{PRR}, \cite{PR}, 
for instance. Some viewpoints involve  basic formulations to obtain the first non-trivial eigenvalue for the Laplacian 
or to obtain special lightlike normal vector fields, which allow to prove results about the local geometry of the surfaces. 
Our approach is to consider PDEs in complex variables, as well as  analysis of holomorphic or anti-holomorphic maps.

\vspace{0.1cm}
 
In fact,  this paper is  focused on spacelike surfaces in $\mathbb L^4$ with certain extrinsic geometric 
conditions via PDEs which appear when we represent  the surfaces locally using a special complex representation for the 
complex derivative of the immersion. Our complex representation involves two 
functions $a$ and $b$ which may be taken to be holomorphic or anti-holomorphic, depending in which quadric is made 
the immersion and which are the geometric conditions assumed.  We note that in a simultaneous paper of the two first authors 
together M. Magid (\cite{DPM}), our complex representation formula obtained here was  already adapted for a split-complex 
representation (or paracomplex) of the derivative, with the objective of obtaining a Weierstrass representation of 
timelike surfaces in $\mathbb R^4_1$  that were solutions of the Bjorling problem in $\mathbb R^4_1$.  
A priori, our complex representation for spacelike surfaces in $\mathbb R^4_1$ established here, it was also already used by 
A. Asperti and J. Vilhena in \cite{AV} to solve the Bjorling problem for maximal spacelike  surfaces in  $\mathbb R^4_1$. 

\vspace{0.1cm}
Inspired by the Weierstrass representation obtained by Hoffman and Osserman in \cite{HO1} and \cite{HO2}, we develop our complex representation assuming two 
smooth maps $a, \ b$ to be complex valued and we look for conditions 
for the existence of a conformal spacelike immersion $(M,f)$  in $\mathbb L^4$ such that $f_w (w) = \mu (w) W(a(w), b(w))$ where 
$\mu$ is a complex integrability factor and the complex valued function $W$ is conveniently given in terms of $a$ and $b$. 
Then assuming $a$ and $b$ to be holomorphic or anti-holomorphic maps, we find interesting partial differential equations whose local 
solutions will be associated to conformal spacelike surfaces in $\mathbb L^4$ or in the lightlike cone with certain curvature 
conditions, as well as, to be associated to conformal spacelike surfaces in the hyperbolic space with lightlike mean curvature. 
Using \cite{H} and \cite{LR}, we also focus on those equations and to construct, in a systematic way, local solutions for those PDEs together with its associated  
explicit spacelike surfaces. For instance, for conformal, totally umbilical spacelike surface in $\mathbb L^4$, we study certain 
PDEs involving two anti-holomorphic parameters, which is associated to conformal immersions into the light cone. 
In fact, analyzing that PDE, we establish  conditions for obtaining positive real valued solutions and as immediate consequence, 
we show that the spacelike surface has to be congruent to a spherical surface $S^2(1/r) \subset \mathbb R^3$ or to  an hyperbolic 
surface $S^2(-1/r) \subset \mathbb R^3_1$.  

In another direction, we also find the partial Riccati equation with two holomorphic parameters, which appears when describing spacelike conformal surfaces in hyperbolic space $\mathbb H^3$ with lightlike mean curvature vector field. In general, these kind of surfaces are well known as Bryant surfaces (\cite{B}). In fact,
 through of several computations we first find a PDE equation linking the functions $a$, $b$ and under a variable change we then find the partial Riccati Equation for which we study local solutions. We construct new local solutions from two given solutions and so, in addition, we identify the geometry associated to the solution. Note that it is difficult to find explicit solutions for the Riccati Equation, so in this paper we give a systematic tool which will allow to go ahead when finding its solutions. \ Then several explicit examples of solutions of partial Riccati Equation and its associate geometry are given. In addition, we also prove one of our main results, which establishes that for each conformal spacelike  surface $M$ in $\mathbb L^4$, which satisfies the Riccati equation with two holomorphic parameters, there exists a Bryant immersion of $M$  in $\mathbb H^3$, such that both surfaces
are congruent each other by a translation vector.

\vspace{0.2cm}
 Section 2 of this paper, is devoted to preliminaries where we establish notation and the importance of maps 
 $a$ and $b$.  We also establish the theory about integrability of complex 1-forms, 
 which will be used to obtain a Weierstrass representation type. 
 Section 3  develops theory about conformal surfaces in the Minkowski-Lorentz  Space $\mathbb L^4$.  In this section we discover  the first partial differential equations involving the maps $a, b$ and $\mu$, namely, equations corresponding to compatibility condition equations and to the factor of integration $\mu$.  Theorem  \ref{110} shows the Weierstrass representation for the surfaces. Section 4 is devoted to conformal immersions into the future
  directed light cone $\mathcal C$. Here, it is found the partial differential equation (\ref{17}), this relates to
 the existence of conformal immersions in the light cone such that  $f_w = \mu(w) W(a, b)$. Some explicit solutions are also found. Theorem 
 \ref{1101} establishes  
 that,  for every conformal totally umbilical immersion from $M$ into $\mathbb L^4$, with $a$ and $b$ anti-holomorphic, there exists a conformal immersion in the 
 light cone, which is similar up to translation. Moreover, the totally umbilical surface has to be congruent to a spherical surface $S^2(1/r) \subset \mathbb R^3$ or to  an hyperbolic surface $S^2(-1/r) \subset \mathbb R^3_1$. Section 5 focuses on the study of partial differential equations associated to the case when the mean curvature vector
 is lightlike in a immersion in the 3-dimensional hyperbolic space. We discover the partial Riccati Equation (\ref{111})
  with two holomorphic parameters when finding the linking equation
 for the maps $a$ and $b$, where $a$ is holomorphic and $b$ is neither holomorphic or anti-holomorphic. Section 6 is devoted to study
 geometric solutions associated to the  partial Riccati equation with holomorphic parameters.
 Then Lemma \ref{54},  Proposition \ref{46.1} together  Proposition \ref{392} give then a systematic tool to construct explicit solutions for the Riccati Equation and  Bryant surfaces.  Several explicit examples are then given. Section 6 finishes  with Theorem \ref{96} which establishes  that every conformal immersion
$(M,f)$ in $\mathbb L^4$ which satisfies the partial Riccati Equation, has to be congruent to a Bryant immersion $(M,h)$ in $\mathbb H^3$. 
 
\section{Preliminaries} 

The Minkowski vector space $\mathbb L^4= \mink$ is the real vector space $\reals^{4}$ endowed with the usual Euclidean topology, 
with the semi-Riemannian metric 
$$\langle \;, \; \rangle = -(d x_{0})^{2} + (d x_{1})^{2} + (d x_{2})^{2} + (d x_{3})^{2},$$ 
oriented vectorially by $\partial_{0} \wedge \partial_{1} \wedge\partial_{2} \wedge\partial_{3}$ and temporally by $\partial_{0}$, where  $\{\partial_{0}, \partial_{1}, \partial_{2}, \partial_{3}\}$ is 
the canonical basis of $\mathbb L^4$.

On $\mathbb L^4$ let $\reals^{3}$ be the vector subspace generated by $\partial_{1}, \partial_{2}, \partial_{3}$. The vectors of 
$\reals^{3}$ are indicated by an upper hat. Namely, if $v \in \reals^{3}$, it is indicated by $\hat{v}$. 

An orthonormal basis $\{E_{k}\}$ of $\mathbb L^4$ is a Minkowski referential if $E_{0}$ is a timelike future directed and 
$E_{0} \wedge E_{1} \wedge E_{2} \wedge E_{3} = \partial_{0} \wedge \partial_{1} \wedge \partial_{2} \wedge \partial_{3}$.

Given $u, v, w \in \mathbb L^4$ the vector ${\X}(u,v,w)$ defined by
$$\lpr{\X(u,v,w)}{z} = - dx_{0} \wedge dx_{1} \wedge dx_{2} \wedge dx_{3} (z,u,v,w), \ \  \; \; \forall z \in \mathbb L^4$$ 
is the cross product of $(u,v,w)$. 

In particular $\X(\partial_{0},u,v) = \hat{u} \times \hat{v}$ is the usual cross product of $\reals^{3}$.

\begin{prop}
Let $V$ be a spacelike vector subspace of $\mathbb L^4$. Then $\{ u,v \} \subset V$ is a linearly dependent set on $\mathbb L^4$ if and only if 
$\{ \hat{u}, \hat{v} \}$ is a linearly dependent set on $\reals^{3}$.
\end{prop}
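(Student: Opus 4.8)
The plan is to exhibit the spatial projection
$\pi \colon \mathbb L^4 \to \reals^{3}$, $v \mapsto \hat v$, which simply discards the $\partial_{0}$-component, and to show that its restriction $\pi|_{V}$ is an injective linear map. Injectivity is precisely the point at which the spacelike hypothesis is used, and once it is in hand both implications of the equivalence become instances of the elementary fact that an injective linear map neither creates nor destroys linear dependence of a pair of vectors.

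First I would dispose of the easy implication, which requires no hypothesis on $V$. Since $\pi$ is linear, any relation $\alpha u + \beta v = 0$ in $\mathbb L^4$ projects to $\alpha \hat u + \beta \hat v = 0$ in $\reals^{3}$; hence $\{u,v\}$ linearly dependent forces $\{\hat u,\hat v\}$ linearly dependent. For the converse I would analyze $\ker \pi$. A vector lies in $\ker\pi$ exactly when its spatial part vanishes, that is, when it is a multiple of $\partial_{0}$; any such nonzero $w = w_{0}\partial_{0}$ satisfies $\lpr{w}{w} = -w_{0}^{2} < 0$ and is therefore timelike. Because $V$ is spacelike, the induced metric on $V$ is positive definite, so $V$ contains no nonzero timelike vector; consequently $\ker\pi \cap V = \{0\}$ and $\pi|_{V}$ is injective. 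The only step that genuinely requires care is this triviality of $\ker\pi \cap V$ — everything else is formal — so I expect that to be the crux of the argument.

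Finally, assuming $\{\hat u,\hat v\}$ linearly dependent, I would choose scalars $\alpha,\beta$ not both zero with $\alpha \hat u + \beta \hat v = 0$, i.e. $\pi(\alpha u + \beta v)=0$. Since $\alpha u + \beta v \in V$ and $\pi|_{V}$ is injective, this gives $\alpha u + \beta v = 0$, so $\{u,v\}$ is linearly dependent in $\mathbb L^4$, closing the equivalence.
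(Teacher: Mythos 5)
Your proposal is correct and rests on exactly the same key observation as the paper's proof: a vector of $V$ whose spatial part vanishes is a multiple of $\partial_{0}$, and since $\lpr{c\,\partial_{0}}{c\,\partial_{0}} = -c^{2}$ while the metric on the spacelike subspace $V$ is positive definite, that vector must be zero. Packaging this as injectivity of the projection $\pi|_{V}$ (and adding the trivial forward implication, which the paper omits) is only a mild rephrasing of the paper's coordinate computation with $v - \lambda u = (v_{0}-\lambda u_{0})\partial_{0}$, so the two arguments are essentially identical.
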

\begin{proof}
In fact, if $\hat{v} = \lambda \hat{u}$ then $v - \lambda u = (v_{0} - \lambda u_{0}) \partial_{0}$. Since $V$ is a 
spacelike vector subspace of $\mathbb L^4$, it follows from 
$\lpr{(v_{0} - \lambda u_{0}) \partial_{0}}{(v_{0} - \lambda u_{0}) \partial_{0}} = - (v_{0} - \lambda u_{0})^{2} \geq 0,$ 
that $v_{0} = \lambda u_{0}$. Therefore $v = \lambda u$.
\end{proof}

If the restriction of $\lpr{\;}{\;}$ to a hyperplane, that is a 3-dimensional vector subspace, is positive definite, indefinite 
or degenerated, then we have respectively spacelike, timelike or lightlike hyperplanes.

In what follows, a plane will be, unless we explicitly say otherwise, a spacelike 2-dimensional vector subspace of 
$\lorentz^{4}$. If $r$ is a positive real number, a $r$-isothermal basis of a plane $E^{2}$ is an ordered basis 
$(X,Y)$ such that 
$$\lpr{X}{X} = \lpr{Y}{Y} = r^{2} \; \; \mbox{ and } \; \; \lpr{X}{Y} = 0.$$ 
On such cases we consider the plane $E^{2}$ oriented by $(X,Y)$. 

The complexification of $\lpr{\;}{\;}$ gives rise to a symmetric bilinear form on $\complex^{4}$, 
also denoted by $\lpr{\;}{\;}$. 
Defining the complex vector $Z = X + i Y \in \complex^{4}$, such that $\lpr{Z}{Z} = 0$ and 
$\lpr{Z}{\overline{Z}} = 2 r^{2} > 0$, 
we have a plane $E^{2} = [X,Y] = \{ \alpha X + \beta Y \in \lorentz^{4} | \ \ \alpha,\beta \in \reals \}$. 
We also note that
$\lpr{Z}{Z} = 0$ and $\lpr{Z}{\overline{Z}} = 2 r^{2} > 0$ imply that $\lpr{X}{X} = \lpr{Y}{Y} = r^{2}$ and $\lpr{X}{Y} = 0$. 

Then it follows quickly that:

\begin{prop}\label{199}
\begin{enumerate} 
\item  Let  $Z=(Z_{0}, Z_{1}, Z_{2},Z_{3}) \in \complex^{4}$  be such that  
$\lpr{Z}{Z} = 0, $
\newline $ \lpr{Z}{\overline{Z}} = 2 r^{2} > 0$ and $ Z_{1} - i Z_{2} \neq 0.$ 
It defines the functions $\mu, \ a $ and \ $b$ \ by 
$$ \mu = \frac{Z_{1} - i Z_{2}}{2}, \; \; \ \ \   a = \frac{Z_{0} + Z_{3}}{Z_{1} -i Z_{2}},  \; \; \ \ \ \  b = \frac{Z_{0} - Z_{3}}{Z_{1} - i Z_{2}}.$$ 
Then it follows that \ $a \overline{b} - 1 \neq 0$. Moreover 
$$
Z = \mu W(a,b) \; \; \; {\rm and} \; \; \; \;
 W(a,b) = \weis{a}{b}.
$$

\item \ Let  $E^{2} = [X,Y]$ be  a fixed spacelike plane. Then its orthogonal complement relative to $\lpr{\;}{\;}$  is the timelike two dimensional 
vector subspace $(E^{2})^{\perp} = [L_{0}(b),L_{3}(a)]$ where $L_{0}(b),L_{3}(a)$ are future directed lightlike vectors given by 
$$
\begin{cases} 
L_{0}(b) = \slight{b} \\ 
 L_{3}(a) = \nlight{a}.
 \end{cases}
 $$ 
\end{enumerate}
\end{prop}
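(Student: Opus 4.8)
The plan is to treat the two parts separately, deriving everything from the two defining relations $\lpr{Z}{Z}=0$ and $\lpr{Z}{\overline{Z}}=2r^{2}>0$ together with $Z_{1}-iZ_{2}\neq 0$. For part (1), I would first expand $\lpr{Z}{Z}=-Z_{0}^{2}+Z_{1}^{2}+Z_{2}^{2}+Z_{3}^{2}=0$ and rewrite it as $(Z_{1}-iZ_{2})(Z_{1}+iZ_{2})=(Z_{0}+Z_{3})(Z_{0}-Z_{3})$. Dividing by $(Z_{1}-iZ_{2})^{2}$ and invoking the definitions of $a$ and $b$ yields $\tfrac{Z_{1}+iZ_{2}}{Z_{1}-iZ_{2}}=ab$. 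Combined with $Z_{0}+Z_{3}=2a\mu$, $Z_{0}-Z_{3}=2b\mu$ and $Z_{1}-iZ_{2}=2\mu$, I would then solve this linear system for the four coordinates, recovering $Z_{0}=\mu(a+b)$, $Z_{1}=\mu(1+ab)$, $Z_{2}=i\mu(1-ab)$, $Z_{3}=\mu(a-b)$, which is exactly $Z=\mu W(a,b)$ with $W(a,b)=\weis{a}{b}$.

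The inequality $a\overline{b}-1\neq 0$ I would extract from the second condition. Using $Z=\mu W(a,b)$ and $\complex$-bilinearity gives $\lpr{Z}{\overline{Z}}=|\mu|^{2}\,\lpr{W(a,b)}{\overline{W(a,b)}}$, and a direct expansion produces the key identity $\lpr{W(a,b)}{\overline{W(a,b)}}=2\bigl(1-a\overline{b}-\overline{a}b+|a|^{2}|b|^{2}\bigr)=2|1-a\overline{b}|^{2}$, the last step being the factorization $(1-a\overline{b})(1-\overline{a}b)$. Since $2r^{2}>0$ and $\mu\neq 0$, this forces $|1-a\overline{b}|^{2}>0$, hence $a\overline{b}\neq 1$.

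For part (2), note first that $E^{2}$ is spacelike, hence non-degenerate, so $(E^{2})^{\perp}$ is a $2$-dimensional subspace, necessarily timelike by the signature $(-,+,+,+)$. I would check by substitution that $L_{0}(b)$ and $L_{3}(a)$ are lightlike and future directed (their zeroth entries $1+|b|^{2}$ and $1+|a|^{2}$ are positive). Next, writing $Z=X+iY$, I would compute $\lpr{L_{0}(b)}{W(a,b)}$ and $\lpr{L_{3}(a)}{W(a,b)}$ and verify both vanish identically; since the $L$'s are real and $Z=\mu W(a,b)$ with $\mu\neq 0$, splitting $\lpr{L}{Z}=\lpr{L}{X}+i\lpr{L}{Y}=0$ into real and imaginary parts shows $L_{0}(b),L_{3}(a)\perp X,Y$, so both lie in $(E^{2})^{\perp}$. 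Finally, linear independence follows from $\lpr{L_{0}(b)}{L_{3}(a)}=-2|1-a\overline{b}|^{2}$, which is nonzero by part (1): two null vectors with nonzero mutual product cannot be proportional, so they form a basis of the $2$-dimensional space $(E^{2})^{\perp}$.

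Every computation here is a routine bilinear expansion, so the only genuine obstacle is careful bookkeeping of the many cross terms. The conceptual heart is the single factorization $1-a\overline{b}-\overline{a}b+|a|^{2}|b|^{2}=|1-a\overline{b}|^{2}$, which controls both the nondegeneracy $\lpr{W(a,b)}{\overline{W(a,b)}}>0$ in part (1) and the linear independence $\lpr{L_{0}(b)}{L_{3}(a)}\neq 0$ in part (2); once this identity is established, both assertions follow mechanically.
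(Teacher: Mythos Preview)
Your argument is correct in every detail: the algebraic recovery of $Z=\mu W(a,b)$ from $\lpr{Z}{Z}=0$, the factorization $\lpr{W(a,b)}{\overline{W(a,b)}}=2|1-a\overline{b}|^{2}$ yielding $a\overline{b}\neq 1$, and the orthogonality and independence checks for $L_{0}(b),L_{3}(a)$ are all sound. The paper itself provides no proof of this proposition (it is introduced with ``Then it follows quickly that'' and only the identity $\lpr{L_{0}(b)}{L_{3}(a)}=-2(1-a\overline{b})(1-b\overline{a})$ is recorded afterward), so your write-up supplies exactly the routine verification the authors leave to the reader.
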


\vspace{0.3cm}
We note in particular that $\lpr{L_{0}(b)}{L_{3}(a)} = - 2(1 - a \overline{b})(1 - b \overline{a}).$ 

\vspace{0.3cm}

Next in this paper, we look for answers for the following question: 

Let $a,b : M \to \mathbb C$ be two smooth maps and $W(a(w),b(w))$ the family of 2-dimensional spacelike vector 
subspaces of $\mathbb L^4$, 
 where 
$$W(a,b) = (a + b,1 + ab,i (1-ab),a- b).$$  Under which conditions we can guarantee the existence of a conformal spacelike immersion $ (M,f)$ from $M$ into $\mathbb L^4$, such that $f_w(w) = \mu(w) W(a(w),b(w))$?
In fact, we will show in next sections, that the existence depends of the solution of the PDE system involving the maps $a$ and $b$. When assuming that $a$ or $b$ are holomorphic or anti-holomorphic, we also show in particular, that the existence of the immersion depends from the existence of solutions of certain partial equations, within which we find the partial Riccati Equation.

 \vspace{0.3cm}

Throughout this paper, $M$ will be an open connected and simply connected subset of the set of the 
complex numbers $\complex$. We will denote by $\mathcal{H}(M)$ the set of holomorphic maps from $M \subset \complex$ 
into $\complex$. A map $f = P + i Q$ from $M$ into $\complex$ is an anti-holomorphic map if, and only if, its conjugate map
$\overline{f} = P - iQ$ is a holomorphic map. The set of all anti-holomorphic maps will be denoted by 
$\overline{\mathcal{H}}(M)$. The set of all continuously differentiable maps from $M$ into $\complex$ we will be denoted by 
${C}^{\infty}(M,\complex)$, and we say that these maps are smooth maps from $M$ into $\complex$.

Let
$$\frac{\partial}{\partial w} = \frac{1}{2} \left( \frac{\partial}{\partial u} - i \frac{\partial}{\partial v} \right) 
\; \; \ \ \mbox{ and } \; \; \ \ 
\frac{\partial}{\partial \overline{w}} = \frac{1}{2} \left( \frac{\partial}{\partial u} + i \frac{\partial}{\partial v} 
\right)$$
be the differential operators defined over the set of all smooth maps from $M$ into $\complex$, where $w = u + i v \in M$. 

It follows that 
a smooth map $f$ from $M$ into $\complex$ 
is a holomorphic map if and only if $\frac{\partial}{\partial \overline{w}} f(w) = 0$ for all $ w \in M$. 

Here we  will also use often  
the notation $\frac{\partial f}{\partial w} = f_{w}$ \ and \ $\frac{\partial f}{\partial \overline{w}} = f_{\overline{w}}$. 

\vspace{0.3cm}

Next we develop the theory about integrability of complex 1-forms. 

\vspace{0.2cm}

Let $\beta = \phi dw$ be a complex $1$-form where $\phi = P - iQ$. So,  
$$\beta = (P - iQ)(du + i dv) = (P du + Q dv) + i(P dv - Q du).$$ 
We will say that the complex $1$-form $\beta$ has no real periods if, and only if, the real $1$-form 
$$\Re(\beta) = P du + Q dv$$ 
is a closed $1$-form, i.e., $d(\Re(\beta)) = (-P_{v} + Q_{u})du \wedge dv = 0$, which is equivalent to $P_{v} = Q_{u}.$ 

In this case, the line integral 
$$I(\gamma,w) = \int_{\gamma} \Re(\phi(\gamma(t)) \gamma'(t)) dt$$ 
does not depends of the continuous path of integration $\gamma : [0,1] \longrightarrow M$ where $\gamma(0) = w_{0}$, 
 $\gamma(1) = w$
and such that there is $\gamma'(t)$ except for a finite set of points.    

\vspace{0.2cm}

Then we take by definition $\int_{w_{0}}^{w} \Re (\beta) := \int_{\gamma} \Re(\phi(\gamma(t)) \gamma'(t)) dt.$

\vspace{0.2cm}
Now, let 
$$L(\gamma,w) = \int_{\gamma} \phi(\gamma(t)) \gamma'(t) dt = I(\gamma,w) + i \int_{\gamma} \Im(\phi(\gamma(t)) \gamma'(t)) dt$$ 
be the integration of the complex $1$-form $\beta = \phi dw$, over a good path $\gamma$. 

Since, $I(\gamma,w)$ only depends of the initial point $w_{0}$ and of the final point $w$, we have that 
$$\Re(L(\gamma,w)) = I(\gamma,w) = \int_{w_{0}}^{w} \Re (\beta)$$ 
and then we define 
$$\Re \int_{w_{0}}^{w} \beta = \Re \int_{\gamma} \phi(\gamma(t)) \gamma'(t) dt = I(\gamma,w) = \int_{w_{0}}^{w} \Re (\beta).$$

\vspace{0.2cm}

\begin{prop}\label{119}
A complex $1$-form $\beta = \phi dw$ has no Real periods if, and only if, $\phi_{\overline{w}}$ is  real. Then, in this case, for each $w \in M$:  
$$\frac{\partial}{\partial w} \left(2 \Re \int_{w_{0}}^{w} \beta \right) = \phi(w).$$
\end{prop}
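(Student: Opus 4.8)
The plan is to treat the two assertions separately, since the first is a purely pointwise computation and the second is an application of the fundamental theorem for line integrals of exact forms on the simply connected domain $M$.

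First I would establish the equivalence. Writing $\phi = P - iQ$ and using the definition $\frac{\partial}{\partial \overline{w}} = \frac{1}{2}\left( \frac{\partial}{\partial u} + i \frac{\partial}{\partial v} \right)$, I would expand
$$\phi_{\overline{w}} = \frac{1}{2}\left[(P_{u} - iQ_{u}) + i(P_{v} - iQ_{v})\right] = \frac{1}{2}\left[(P_{u} + Q_{v}) + i(P_{v} - Q_{u})\right].$$
The real part is $\frac{1}{2}(P_{u} + Q_{v})$ and the imaginary part is $\frac{1}{2}(P_{v} - Q_{u})$, so $\phi_{\overline{w}}$ is real precisely when $P_{v} = Q_{u}$. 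Since the excerpt has already recorded that $\beta = \phi\,dw$ has no real periods if and only if $\Re(\beta) = P\,du + Q\,dv$ is closed, i.e. if and only if $P_{v} = Q_{u}$, chaining the two equivalences yields the first claim.

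For the derivative formula I would assume no real periods and set $F(w) = 2\,\Re \int_{w_{0}}^{w} \beta$, a real-valued function of $w \in M$. By the discussion preceding the statement, $\Re(\beta)$ is a closed $1$-form; because $M$ is simply connected it is exact, the line integral $I(\gamma,w) = \int_{w_{0}}^{w} \Re(\beta)$ is path-independent, and $F = 2 I(\cdot,w)$ is a potential satisfying $dF = 2\,\Re(\beta) = 2(P\,du + Q\,dv)$. Reading off the coefficients gives $F_{u} = 2P$ and $F_{v} = 2Q$. Applying the same operator $\frac{\partial}{\partial w} = \frac{1}{2}\left( \frac{\partial}{\partial u} - i \frac{\partial}{\partial v} \right)$ to this real function then yields
$$\frac{\partial F}{\partial w} = \frac{1}{2}(F_{u} - i F_{v}) = \frac{1}{2}(2P - 2iQ) = P - iQ = \phi(w),$$
which is exactly the stated identity.

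The computations are routine; the only point that requires genuine care, and which I would regard as the real content, is the justification that $F$ is differentiable with $F_{u} = 2P$ and $F_{v} = 2Q$. This rests on the fundamental theorem of calculus for line integrals, namely that the potential of an exact $1$-form recovers the form as its differential. The global hypothesis that $M$ is open, connected and simply connected is precisely what upgrades ``closed'' to ``exact'', and hence guarantees both the path-independence of $I(\gamma,w)$ and the well-definedness of $F$; without it the potential need not exist and the formula would fail.
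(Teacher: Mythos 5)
Your proof is correct and follows essentially the same route as the paper: define the real potential $F = 2\Re\int_{w_0}^{w}\beta$, use path-independence on the simply connected $M$ to get $F_u = 2P$, $F_v = 2Q$, and apply $\partial/\partial w$ to obtain $\phi$. The only difference is that you spell out the computation $\phi_{\overline{w}} = \tfrac{1}{2}\bigl[(P_u + Q_v) + i(P_v - Q_u)\bigr]$ for the equivalence with $P_v = Q_u$, which the paper leaves implicit in the discussion preceding the proposition.
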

\begin{proof}
Define the real valued function $$\Psi = 2 \Re \int_{w_{0}}^{w} \beta = 2 \int_{w_{0}}^{w} P du + Q dv.$$ 
Then, it follows from Calculus that $\nabla \Psi(u,v) = (\Psi_{u},\Psi_{v}) = (2 P(u,v), 2Q(u,v))$, so 
$$\Psi_w = \frac{1}{2}\left(\frac{\partial \Psi}{\partial u} - i \frac{\partial \Psi}{\partial v}\right) = P(u,v) - i Q(u,v) = \phi(w).$$
\end{proof}

\section{Conformal Surfaces in the Minkowski Space $\lorentz^{4}$}

\begin{dfn}
A conformal surface of the Minkowski space $\lorentz^{4}$ is a pair $(M,f)$, where $M \subset \complex$ is a 
connected simply connected open subspace of the complex plane and $f:M \longrightarrow \lorentz^{4}$ is an immersion 
such that, for each $w \in M$, 
\begin{equation}
\lpr{f_{w}(w)}{f_{w}(w)} =0 \ \ \; \mbox{and}  \ \ \; \lpr{f_{w}(w)}{{\overline{f_{w}(w)}}} = \frac{r^{2}(w)}{2}, \; \ \ 
\mbox{ where } \; r(w) > 0.
\end{equation}
\end{dfn}

We observe that the second condition in (1) implies that $\lpr{f_u}{f_u} = \lpr{f_v}{f_v} = r^2$, and  
hence the metric tensor of $(M,f)$ is such that  $g_{11} = g_{22} = r^2$ and $g_{12} =0$. This means the metric tensor is given by 
$d s^{2}(f) = r^{2} d w d{\overline{w}}.$   

\vspace{0.3cm}

In this paper we are interested only in surfaces for which 
$$ {f_{1}}_{w}(w) - i {f_{2}}_{w}(w) \neq 0, \ \ \forall w \in M.
$$
where $f_j, \ j=0, 1,2,3$ denotes the components of $f$. 
Hence $f_{w}(w) = \mu(w) W(a(w),b(w)),$ and we set 
\begin{equation}\label{1'}
a(w) = \frac{{f_0}_{w}(w) + {f_3}_{w}(w)}{{f_1}_{w}(w) - i {f_2}_{w}(w)}, \; \;
b(w) = \frac{{f_0}{w}(w) - {f_3}_{w}(w)}{{f_1}_{w}(w) - i {f_2}_{w}(w)}, \; \;
\mu(w) = \frac{{f_1}_{w}(w) - i {f_2}_{w}(w)}{2},
\end{equation}

where we also assume that both maps $a(w)$ and $b(w)$ are not constant maps. 

\vspace{0.3cm}

Then we have the following proposition.

\begin{prop} Let $(M,f)$ be a conformal immersion from $M$ into $\lorentz^{4}$. 
Then the first and second fundamental quadratic form of the immersion are respectively
$$ds^{2}(f) = 2 \lpr{f_{w}}{f_{\overline{w}}} dw d{\overline{w}} = 4 
\mu {\overline{\mu}} (1 - a \overline{b})(1 - b \overline{a}) dw d{\overline{w}},$$
$$B(f) = [f_{w w}]^{\perp} dw dw + 2 f_{w \overline{w}} dw d{\overline{w}} + 
\overline{[f_{w w}]^{\perp}} d{\overline{w}} d{\overline{w}},$$ 

and the shape operator is given by
$$\lpr{A_{f}^{(\vec{n})}(X)}{Y} = \lpr{B(f)(X,Y)}{\vec{n}} \; \; \mbox{ for each } \; \; \vec{n} \in [f_{w}]^{\perp} \; \; with \ \ 
X,Y \in [f_{w}].$$
\end{prop}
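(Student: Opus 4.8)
The plan is to verify the three stated formulas by direct computation, using conformality and the Weierstrass-type factorization established in Proposition \ref{199}.

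\medskip

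\emph{First fundamental form.} I would start from the definition of a conformal immersion, where the metric is $ds^2(f) = 2\lpr{f_w}{f_{\overline{w}}}\,dw\,d\overline{w}$. Writing $f_w = \mu\,W(a,b)$, I have $f_{\overline{w}} = \overline{f_w} = \overline{\mu}\,\overline{W(a,b)}$, so $\lpr{f_w}{f_{\overline{w}}} = \mu\overline{\mu}\,\lpr{W(a,b)}{\overline{W(a,b)}}$. The key computation is then to evaluate the Hermitian inner product $\lpr{W(a,b)}{\overline{W(a,b)}}$ using the Minkowski metric $-(\cdot)_0^2 + (\cdot)_1^2 + (\cdot)_2^2 + (\cdot)_3^2$ applied to $W(a,b) = \weis{a}{b}$. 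After expanding, the result should simplify to $2(1-a\overline{b})(1-b\overline{a})$, giving the stated factor $4\mu\overline{\mu}(1-a\overline{b})(1-b\overline{a})$. This calculation is routine but must be done carefully with the signs coming from the Lorentzian metric; comparing with the already-noted identity $\lpr{L_0(b)}{L_3(a)} = -2(1-a\overline{b})(1-b\overline{a})$ provides a useful consistency check.

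\medskip

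\emph{Second fundamental form.} For $B(f)$, the strategy is to recall that the second fundamental form is the normal component of the Hessian of $f$. In conformal (isothermal) coordinates $w = u+iv$, one re-expresses the real symmetric form $\sum B(f_u,f_v)\,du\,dv$ etc.\ in terms of the complex operators $\partial_w$ and $\partial_{\overline{w}}$. The identities $f_{uu} + f_{vv} = 4 f_{w\overline{w}}$, $f_{uu} - f_{vv} = 2(f_{ww} + f_{\overline{w}\,\overline{w}})$, and $f_{uv} = \tfrac{1}{i}(f_{ww} - f_{\overline{w}\,\overline{w}})$ (up to the usual constants) let me rewrite the quadratic form $B(f_u,f_u)\,du^2 + 2B(f_u,f_v)\,du\,dv + B(f_v,f_v)\,dv^2$ as a combination of $dw^2$, $dw\,d\overline{w}$, $d\overline{w}^2$. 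Taking normal components and using that $[f_{ww}]^\perp$ and $\overline{[f_{ww}]^\perp}$ are complex conjugates (since $f$ is real-valued) while the mixed term $2 f_{w\overline{w}}$ is automatically normal — here I would invoke that $f_{w\overline{w}}$ is orthogonal to $[f_w]$, which follows by differentiating the conformality relation $\lpr{f_w}{f_w}=0$ with respect to $\overline{w}$ to get $\lpr{f_{w\overline{w}}}{f_w}=0$, and similarly its conjugate — yields exactly the displayed expression for $B(f)$.

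\medskip

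\emph{Shape operator.} The final identity $\lpr{A_f^{(\vec{n})}(X)}{Y} = \lpr{B(f)(X,Y)}{\vec{n}}$ is essentially the definition of the shape (Weingarten) operator associated to a normal direction $\vec{n}$ via the Gauss--Weingarten formulas, restricted to the tangent plane $[f_w]$ with $\vec{n}$ ranging over the normal plane $[f_w]^\perp$. So this part is a matter of stating the standard correspondence between the vector-valued second fundamental form and the scalar second fundamental forms obtained by pairing with normal fields, and noting it is well defined precisely because $B(f)$ takes values in the normal bundle.

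\medskip

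I expect the main obstacle to be the second fundamental form computation: correctly translating between the real isothermal description and the complexified $(dw, d\overline{w})$ description, keeping track of the constants in the Wirtinger-type identities, and justifying that the middle coefficient is genuinely the full normal part $2 f_{w\overline{w}}$ rather than a projection. The first fundamental form reduces to a bookkeeping expansion of the Hermitian norm of $W(a,b)$, and the shape operator statement is definitional, so the conceptual weight of the proof sits in the $B(f)$ step.
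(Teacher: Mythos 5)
Your plan is correct, and in fact the paper offers no proof of this proposition at all: it is stated as a routine consequence of the definitions, with the one nontrivial ingredient (that $f_{w\overline{w}}$ is normal, obtained by differentiating $\lpr{f_{w}}{f_{w}}=0$ in $\overline{w}$) only appearing later, in the proof of Theorem \ref{1.11}. Your outline supplies exactly the omitted verification: the expansion $\lpr{W(a,b)}{\overline{W(a,b)}}=2(1-a\overline{b})(1-b\overline{a})$ gives the first fundamental form (consistent with the remark after the definition that $2\lpr{f_{w}}{f_{\overline{w}}}=r^{2}$), the Wirtinger-type conversion of $(f_{uu})^{\perp}du^{2}+2(f_{uv})^{\perp}du\,dv+(f_{vv})^{\perp}dv^{2}$ yields the coefficients $[f_{ww}]^{\perp}$, $2f_{w\overline{w}}$, $\overline{[f_{ww}]^{\perp}}$ (the middle one being the full vector precisely because $f_{w\overline{w}}$ is already normal and real), and the shape-operator identity is definitional. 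Two small points to tidy up when writing it out: the mixed-derivative identity should read $f_{uv}=i(f_{ww}-f_{\overline{w}\,\overline{w}})$, not $\tfrac{1}{i}(f_{ww}-f_{\overline{w}\,\overline{w}})$; and the factorization $f_{w}=\mu\,W(a,b)$ uses the standing assumption ${f_{1}}_{w}-i{f_{2}}_{w}\neq 0$ made before (\ref{1'}), which is worth citing explicitly.
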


\begin{prop}\label{0.1}
Let $(M,f)$ be a conformal immersion from $M$ into $\lorentz^{4}$, and $\vec{v} = (v_{0},v_{1},v_{2},v_{3})$  be a vector in  $\lorentz^{4}$.
\begin{enumerate}
\item \ The surface $S = f(M)$ is contained into the affine hyperplane 
$H_{\vec{v}} = p_{0} + [\vec{v}]^{\perp} = \{p \in \lorentz^{4} : \lpr{p - p_{0}}{\vec{v}} = 0 \}$ if, and only if, for all $w \in M$, 
$\lpr{f_{w}}{\vec{v}} = 0$.
\vspace{0.2cm}

\item \ If  $\vec{v}$ is such that $\lpr{\vec{v}}{\vec{v}} \neq 0$ and unitary, then
 $S \subset H_{\vec{v}}$ if and only if  there exists a M$\ddot{o}$bius 
transformation $M_{\vec{v}} : \complex \longrightarrow \complex$ defined by 
\begin{equation*}
M_{\vec{v}}(p) = \frac{V_{0} \; p - Z}{V_{3} + \overline{Z}\; p}, \ \; \mbox{with } \; \ V_{0} = v_{0} + v_{3}, \; \ 
V_{3} = - v_{0} + v_{3}, \;  \ Z = v_{1} + i v_{2},
\end{equation*}
such that for each $w \in M$, \ $a(w) = M(b(w))$.
\end{enumerate}
\end{prop}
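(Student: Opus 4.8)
## Proof Proposal for Proposition \ref{0.1}

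The plan is to prove the two parts in sequence, using the first part as the foundation for the second. For part (1), the strategy rests on the observation that $S \subset H_{\vec{v}}$ is equivalent to the function $w \mapsto \lpr{f(w) - p_0}{\vec v}$ being identically zero. Since this is a real-valued (constant) condition on a connected set, I would differentiate: the surface lies in the affine hyperplane if and only if all tangential derivatives of $\lpr{f}{\vec v}$ vanish. Working in the complex variable, $\lpr{f}{\vec v}$ is constant on $M$ precisely when both $\partial_w \lpr{f}{\vec v} = \lpr{f_w}{\vec v} = 0$ and its conjugate vanish; but since $\vec v$ is a fixed real vector, $\lpr{f_{\overline w}}{\vec v} = \overline{\lpr{f_w}{\vec v}}$, so the single complex equation $\lpr{f_w}{\vec v} = 0$ already encodes the full tangential condition. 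The forward direction is immediate by differentiation; for the converse, $\lpr{f_w}{\vec v} = 0$ forces $\lpr{f}{\vec v}$ to have vanishing gradient on the connected set $M$, hence to be constant, and choosing $p_0 = f(w_0)$ places $S$ in $H_{\vec v}$.

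For part (2), I would expand the condition $\lpr{f_w}{\vec v} = 0$ from part (1) using the explicit representation $f_w = \mu W(a,b)$ with $W(a,b) = \weis{a}{b}$. Since $\mu \neq 0$, the condition reduces to $\lpr{W(a,b)}{\vec v} = 0$. The heart of the computation is to expand this Minkowski inner product explicitly, recalling the sign convention $\lpr{\;}{\;} = -(dx_0)^2 + (dx_1)^2 + (dx_2)^2 + (dx_3)^2$. Writing out $\lpr{W(a,b)}{\vec v}$ with the components of $W(a,b)$, I expect the terms to collect into an expression that is \emph{bilinear} in $a$ and $b$ — specifically of the form $\alpha\, ab + \beta\, a + \gamma\, b + \delta = 0$, where the coefficients $\alpha, \beta, \gamma, \delta$ are linear combinations of $v_0, v_1, v_2, v_3$. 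The natural grouping $V_0 = v_0 + v_3$, $V_3 = -v_0 + v_3$, and $Z = v_1 + iv_2$ is precisely what makes these coefficients clean, and solving the resulting relation for $a$ in terms of $b$ should yield exactly the Möbius transformation
$$
a = \frac{V_0\, b - Z}{V_3 + \overline{Z}\, b}.
$$

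The main obstacle I anticipate is the bookkeeping in the inner-product expansion: one must carefully track the imaginary unit appearing in the third component $i(1-ab)$ of $W(a,b)$ together with the conjugate $\overline Z = v_1 - iv_2$, and verify that the cross terms combine so that the coefficient structure matches the stated $V_0, V_3, Z$ exactly rather than up to some spurious factor or conjugation. A secondary subtlety is the role of the hypothesis $\lpr{\vec v}{\vec v} \neq 0$ and the unit normalization: I expect these to guarantee that the denominator $V_3 + \overline Z\, b$ does not vanish identically and that the map $M_{\vec v}$ is a genuine (nondegenerate) Möbius transformation — its determinant being essentially $-\lpr{\vec v}{\vec v}$ up to sign, so nonvanishing of $\lpr{\vec v}{\vec v}$ is exactly what prevents degeneracy. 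Both directions of the equivalence then follow from part (1), since $a(w) = M_{\vec v}(b(w))$ is algebraically equivalent to $\lpr{W(a,b)}{\vec v} = 0$, which is equivalent to $\lpr{f_w}{\vec v} = 0$, which by part (1) is equivalent to $S \subset H_{\vec v}$.
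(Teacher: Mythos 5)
Your proposal is correct and follows essentially the same route as the paper: expand $\lpr{f_{w}}{\vec{v}}=0$ via $f_{w}=\mu W(a,b)$ into the bilinear relation $aV_{3}-bV_{0}+Z+ab\overline{Z}=0$ and solve for $a$, with the hypothesis $\lpr{\vec{v}}{\vec{v}}=\pm 1$ used exactly as you anticipate, namely to rule out the vanishing of the denominator $V_{3}+\overline{Z}\,b$ (the paper notes that $b=-V_{3}/\overline{Z}$ would force $V_{0}V_{3}+Z\overline{Z}=\lpr{\vec{v}}{\vec{v}}=0$). Your part (1) argument by differentiating $\lpr{f-p_{0}}{\vec{v}}$ and using connectedness is the standard step the paper leaves implicit.
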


\begin{proof} First it notes that 
$\lpr{f_{w}}{\vec{v}} = 0$ if and only if
$-(a + b)v_{0} + (a - b)v_{3} + (1 + ab)v_{1}  + i(1 - ab)v_{2} =  
a V_{3} - bV_{0} + Z + ab{\overline{Z}} = 0$. Now, if \ $b(w_{0}) = \frac{-V_{3}}{\overline{Z}}$ \ 
then 
$$0 = a V_{3} - \frac{-V_{3}}{\overline{Z}}V_{0} + Z + a\frac{-V_{3}}{\overline{Z}} {\overline{Z}} = 
\frac{V_{0} V_{3} + Z \overline{Z}}{\overline{Z}} \ \ if \ and \ only \ if \  \; \lpr{\vec{v}}{\vec{v}} = 0.$$ 
Finally, if  we assume that $\lpr{\vec{v}}{\vec{v}} = \pm 1$ it follows that $a(w) = \frac{V_{0} \; b(w) - Z}{V_{3} + \overline{Z}\; b(w)}.$
\end{proof}
\begin{corol}
$S = f(M)$ is contained into an affine lightlike hyperplane if, and only if, either $a(f)$ or $b(f)$ is a 
constant map.

\end{corol}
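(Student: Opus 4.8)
The plan is to reduce containment in an affine lightlike hyperplane to a single algebraic identity in $a$ and $b$, and then extract constancy by a connectedness argument. First I would record that an affine hyperplane $H_{\vec v} = p_{0} + [\vec v]^{\perp}$ is lightlike precisely when its normal $\vec v$ is a nonzero lightlike vector: the induced form on the $3$-dimensional direction space $[\vec v]^{\perp}$ is degenerate if and only if $\vec v \in [\vec v]^{\perp}$, i.e. $\lpr{\vec v}{\vec v} = 0$. Combining this with part (1) of Proposition \ref{0.1}, the assertion ``$S$ lies in an affine lightlike hyperplane'' becomes ``there is a nonzero lightlike $\vec v$ with $\lpr{f_{w}}{\vec v} = 0$ for every $w$''; and since $f_{w} = \mu\, W(a,b)$ with $\mu \neq 0$, this is equivalent to $\lpr{W(a,b)}{\vec v} \equiv 0$ on $M$.

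The computational core is the pair of factorization identities
\begin{equation*}
\lpr{W(a,b)}{L_{3}(a_{0})} = -2(a - a_{0})(1 - \overline{a_{0}}\,b), \qquad \lpr{W(a,b)}{L_{0}(b_{0})} = -2(b - b_{0})(1 - \overline{b_{0}}\,a),
\end{equation*}
which I would verify by expanding $W(a,b) = \weis{a}{b}$ against the explicit future-directed lightlike vectors $L_{3}(a_{0})$ and $L_{0}(b_{0})$ from Proposition \ref{199}. These immediately settle the direction $(\Leftarrow)$: if $a \equiv a_{0}$, the first identity gives $\lpr{f_{w}}{L_{3}(a_{0})} \equiv 0$, so $S \subset H_{L_{3}(a_{0})}$, a lightlike hyperplane; symmetrically, if $b \equiv b_{0}$ one uses $L_{0}(b_{0})$.

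For $(\Rightarrow)$ I would first note that, up to positive scaling and sign (with $H_{\vec v} = H_{-\vec v}$), every lightlike direction is either $L_{3}(a_{0})$ or $L_{0}(b_{0})$: a future-directed lightlike vector has $v_{0} \pm v_{3}$ not both $\leq 0$, and the families $L_{3}$ and $L_{0}$ exhaust the cases $v_{0} - v_{3} > 0$ and $v_{0} + v_{3} > 0$ respectively. Hence $\lpr{W(a,b)}{\vec v} \equiv 0$ reads, in the first case, $(a - a_{0})(1 - \overline{a_{0}}\,b) \equiv 0$ on $M$. The hard part is passing from this pointwise dichotomy to global constancy. The key observation is that the two zero sets $\{a = a_{0}\}$ and $\{1 - \overline{a_{0}}\,b = 0\}$ are \emph{disjoint}: at a common point one would have $a = a_{0}$ and $b = 1/\overline{a_{0}}$, whence $a\overline{b} = 1$, contradicting the immersion condition $(1 - a\overline{b})(1 - b\overline{a}) \neq 0$ built into the expression for $ds^{2}(f)$. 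Being disjoint closed subsets whose union is the connected set $M$, one of them must be empty, so $a \equiv a_{0}$ or $b \equiv 1/\overline{a_{0}}$; either way $a$ or $b$ is constant. The case $\vec v = L_{0}(b_{0})$ is handled identically via $b\overline{a} = 1$.

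The main obstacle is exactly this last deduction: for merely smooth (not necessarily holomorphic or anti-holomorphic) maps $a,b$, a pointwise product identity does not by itself force one factor to vanish identically, so the argument cannot invoke the identity theorem. What rescues it is the regularity hypothesis, which forbids $a\overline{b} = 1$ and thereby separates the two candidate zero sets, after which connectedness of $M$ finishes the proof. Finally, I would check the degenerate cases $a_{0} = 0$ and $b_{0} = 0$ separately, where the relevant second factor is identically $1$ and the conclusion $a \equiv 0$ (resp. $b \equiv 0$) is read off directly from the factorization identity.
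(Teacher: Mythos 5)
Your proof is correct, and it reaches the conclusion by a genuinely different mechanism than the paper's. The paper's own argument takes the relation $aV_{3} - bV_{0} + Z + ab\overline{Z} = 0$ coming from $\lpr{f_{w}}{\vec{v}} = 0$, writes $a = \frac{V_{0}b - Z}{V_{3} + \overline{Z}b}$ as in Proposition \ref{0.1}(2), and observes that for a lightlike normal the ``determinant'' $V_{0}V_{3} + Z\overline{Z} = \lpr{\vec{v}}{\vec{v}}$ vanishes, whence $a_{w} = a_{\overline{w}} = 0$ and $a$ is constant; this is quick, but it tacitly assumes $V_{3} + \overline{Z}b \neq 0$ (the excluded case being precisely the other alternative, $b \equiv -V_{3}/\overline{Z}$ constant), and the converse implication is left implicit. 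You instead normalize the lightlike normal into one of the two families $L_{3}(a_{0})$, $L_{0}(b_{0})$, factor the orthogonality relation as $\lpr{W(a,b)}{L_{3}(a_{0})} = -2(a - a_{0})(1 - \overline{a_{0}}b)$ (which is exactly the paper's relation specialized to a lightlike $\vec{v}$, and is consistent with the pairings $\lpr{W_{\overline{w}}}{L_{3}(a)} = -2a_{\overline{w}}(1 - b\overline{a})$ computed in the proof of Theorem \ref{1.11}), and then extract constancy by a point-set argument: the two zero sets are disjoint closed subsets of the connected set $M$ because the regularity $a\overline{b} \neq 1$, $b\overline{a} \neq 1$ forbids a common zero. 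What your route buys is completeness and robustness: both alternatives ($a$ constant or $b$ constant) appear symmetrically, both normal families and the degenerate values $a_{0} = 0$, $b_{0} = 0$ are covered, the reverse implication is spelled out, and no differentiation is needed---which matters since $a$ and $b$ are only smooth here, so an identity-theorem shortcut is unavailable. The cost is the extra verification of the factorization identities and of the fact that every lightlike direction is, up to sign and positive scale, some $L_{3}(a_{0})$ or $L_{0}(b_{0})$, both of which you justify correctly.
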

\begin{proof}
In fact in this case we have that $\lpr{\vec{v}}{\vec{v}} = 0$ and $\lpr{f_{w}}{\vec{v}} = 0$. \ If $a(w) = \frac{V_{0} \; b(w) - Z}{V_{3} + \overline{Z}\; b(w)}$, then, 
$a_{w} = \frac{(V_{0}V_{3} + Z \overline{Z}) b_{w}}{(V_{3} + \overline{Z}\; b(w))^{2}} = 0$ and 
$a_{\overline{w}} = \frac{(V_{0}V_{3} + Z \overline{Z}) b_{\overline{w}}}{(V_{3} + \overline{Z}\; b(w))^{2}} = 0$. 
\end{proof}

\vspace{0.4cm}

For the next result we use the definition that $$\Log z = \{w \in \complex : e^{w} = z \} = \ln \sqrt{z \overline z} + i(\arg(z) + 2 k \pi) 
\; \; \mbox{ where } \; \;  - \pi < \arg(z) < \pi.$$

\vspace{0.2cm}

\begin{theor}\label{1.11}
Let $(M,f)$ be a conformal immersion such that $f_w (w) = \mu(w) W(a(w), b(w))$. Then 
\begin{enumerate}
\item For $L_o$ and $L_3$ given in Proposition \ref{199} item (2), we have that
\begin{eqnarray}
 [f_{w w}]^{\perp} = \frac{\mu a_{w}}{1 - a \overline{b}} L_{0}(b) + \frac{\mu b_{w}}{1 - b \overline{a}} L_{3}(a).
 \end{eqnarray}  
$$f_{w \overline{w}} = \frac{\mu a_{\overline{w}}}{1 - a \overline{b}} L_{0}(b) + 
\frac{\mu b_{\overline{w}}}{1 - b \overline{a}} L_{3}(a).$$
Moreover the compatibility condition is given by $f_{w \overline{w}}(w) \in \lorentz^{4}$ for each $w \in M$.\\
\item  The compatibility condition implies that
\begin{equation}\label{1}
\frac{\mu a_{\overline{w}}}{1 - a \overline{b}} = 
\frac{{\overline{\mu}} \; {\overline{a}}_{w}}{1 - b \overline{a}} \; \; \mbox{ and } \; \; 
\frac{\mu b_{\overline{w}}}{1 - b \overline{a}} = 
\frac{{\overline{\mu}} \; {\overline{b}}_{w}}{1 - a \overline{b}},
\end{equation} 
hence 
\begin{equation}\label{1.1}
\frac{b_{\overline{w}} {\overline{a}}_{w}}{(1 - b \overline{a})^{2}} = 
\frac{a_{\overline{w}} {\overline{b}}_{w}}{(1 - a \overline{b})^{2}}.
\end{equation}  
\item  \ The factor of integration $\mu(w)$ satisfies 
\begin{equation}\label{2}
\frac{\partial}{\partial \overline{w}} Log \mu = 
\frac{\overline{b} a_{\overline{w}}}{1 - a \overline{b}} + \frac{\overline{a} b_{\overline{w}}}{1 - b \overline{a}}.
\end{equation}
\end{enumerate}
\end{theor}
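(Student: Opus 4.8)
The plan is to work throughout in the frame adapted to $(M,f)$: the complexified tangent plane $[f_w]$ is spanned by $f_w=\mu W(a,b)$ and $\overline{f_w}=\overline{\mu}\,\overline{W}$, while by Proposition \ref{199}(2) the normal plane is spanned by the two \emph{real}, future directed lightlike vectors $L_0(b)$ and $L_3(a)$; in particular $L_0(b)$ and $L_3(a)$ are $\lpr{\;}{\;}$-orthogonal to both $W$ and $\overline{W}$. Everything reduces to the two auxiliary vectors $P=\partial W/\partial a=(1,b,-ib,1)$ and $Q=\partial W/\partial b=(1,a,-ia,-1)$, because $W_w=a_w P+b_w Q$ and $W_{\overline{w}}=a_{\overline{w}}P+b_{\overline{w}}Q$. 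First I would record the bilinear products (all routine): $\lpr{P}{W}=\lpr{Q}{W}=0$ (differentiate $\lpr{W}{W}\equiv0$), $\lpr{P}{L_{0}(b)}=\lpr{Q}{L_{3}(a)}=0$, $\lpr{Q}{L_{0}(b)}=-2(1-a\overline{b})$, $\lpr{P}{L_{3}(a)}=-2(1-b\overline{a})$, $\lpr{P}{\overline{W}}=-2\overline{b}(1-b\overline{a})$ and $\lpr{Q}{\overline{W}}=-2\overline{a}(1-a\overline{b})$, together with the already noted $\lpr{L_{0}(b)}{L_{3}(a)}=-2(1-a\overline{b})(1-b\overline{a})$ and $\lpr{W}{\overline{W}}=2(1-a\overline{b})(1-b\overline{a})$. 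Since $L_0,L_3$ are null, projecting against them is immediate, and these numbers give the orthogonal decompositions
\begin{equation*}
P=\frac{-\overline{b}}{1-a\overline{b}}\,W+\frac{1}{1-a\overline{b}}\,L_{0}(b),\qquad
Q=\frac{-\overline{a}}{1-b\overline{a}}\,W+\frac{1}{1-b\overline{a}}\,L_{3}(a)
\end{equation*}
into tangential plus normal parts.

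For item (1) I differentiate $f_w=\mu W$ to get $f_{ww}=\mu_w W+\mu(a_w P+b_w Q)$ and $f_{w\overline{w}}=\mu_{\overline{w}}W+\mu(a_{\overline{w}}P+b_{\overline{w}}Q)$. The terms $\mu_w W,\ \mu_{\overline{w}}W$ and the tangential parts of $P,Q$ all lie in $[f_w]$, so taking normal components and using the decompositions above yields exactly the stated formulas for $[f_{ww}]^{\perp}$ and for the normal part of $f_{w\overline{w}}$. To upgrade the second one to an equality $f_{w\overline{w}}=[f_{w\overline{w}}]^{\perp}$, I show $f_{w\overline{w}}$ is purely normal: differentiating $\lpr{f_w}{f_w}=0$ in $\overline{w}$ gives $\lpr{f_{w\overline{w}}}{f_w}=0$, and since $f$ is $\lorentz^{4}$-valued the vector $f_{w\overline{w}}=\tfrac14\Delta f$ is real, so (using $\overline{f_w}=f_{\overline{w}}$) $\lpr{f_{w\overline{w}}}{f_{\overline{w}}}=\overline{\lpr{f_{w\overline{w}}}{f_w}}=0$; hence $f_{w\overline{w}}\perp f_w,\overline{f_w}$. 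The fact that this vector is real (it equals $\tfrac14\Delta f$) is exactly the compatibility condition $f_{w\overline{w}}\in\lorentz^{4}$ recorded in item (1).

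Item (2) is then the reality statement made explicit. Because $L_{0}(b)=\slight{b}$ and $L_{3}(a)=\nlight{a}$ are real vectors, conjugating the formula for $f_{w\overline{w}}$ (and using $\overline{a_{\overline{w}}}=\overline{a}_{w}$, etc.) gives $\overline{f_{w\overline{w}}}=\dfrac{\overline{\mu}\,\overline{a}_{w}}{1-b\overline{a}}L_{0}(b)+\dfrac{\overline{\mu}\,\overline{b}_{w}}{1-a\overline{b}}L_{3}(a)$. Since $L_0(b),L_3(a)$ are linearly independent, imposing $f_{w\overline{w}}=\overline{f_{w\overline{w}}}$ and matching coefficients delivers the two identities in \eqref{1}. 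Eliminating $\mu/\overline{\mu}$ by taking the ratio of these two identities (equivalently, cross-multiplying them) immediately yields \eqref{1.1}.

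For item (3) I use that the tangential part of $f_{w\overline{w}}$ vanishes. Collecting the tangential contributions in $f_{w\overline{w}}=\mu_{\overline{w}}W+\mu(a_{\overline{w}}P+b_{\overline{w}}Q)$ via the decompositions of $P,Q$ above (note both are multiples of $W$ only, so no $\overline{W}$ appears) gives
\begin{equation*}
[f_{w\overline{w}}]^{\mathrm{tan}}=\left(\mu_{\overline{w}}-\mu\Big(\frac{\overline{b}\,a_{\overline{w}}}{1-a\overline{b}}+\frac{\overline{a}\,b_{\overline{w}}}{1-b\overline{a}}\Big)\right)W=0.
\end{equation*}
As $W\neq0$ and $\mu\neq0$, dividing by $\mu$ and recognizing $\mu_{\overline{w}}/\mu=\partial_{\overline{w}}\Log\mu$ gives \eqref{2}. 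The only genuinely delicate points are the orthogonality of $L_0(b),L_3(a)$ to the whole tangent plane (which I take from Proposition \ref{199}(2)) and the two reality arguments --- that $f_{w\overline{w}}$ is real and that $L_0,L_3$ are real --- everything else being the bookkeeping of the bilinear products listed at the outset.
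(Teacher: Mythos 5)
Your proof is correct and follows essentially the same route as the paper: both arguments project onto the adapted frame $\{W,\overline{W},L_{0}(b),L_{3}(a)\}$ using the very same bilinear pairings, determine the normal coefficients of $f_{ww}$ and $f_{w\overline{w}}$ by pairing against the null normals, obtain \eqref{1} from the reality of $f_{w\overline{w}}$, and obtain \eqref{2} from the vanishing of its $W$-component (the paper phrases this as $\lpr{f_{w\overline{w}}}{\overline{W}}=0$). Your only departures are organizational --- decomposing $W_{a}=P$ and $W_{b}=Q$ once and for all, and spelling out why $f_{w\overline{w}}$ is orthogonal to $\overline{f_{w}}$ as well as to $f_{w}$ --- which slightly sharpens a step the paper leaves implicit.
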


\begin{proof}
Since the immersion is conformal $\lpr{f_{w}}{f_{w}} = 0$, hence  $\lpr{f_{w \overline{w}}}{f_{w}} = 0$. Thus $f_{w \overline{w}}$ is
an orthogonal vector fields of the Minkowski space $\lorentz^{4}$ along of the immersion $(M,f)$. Then
\begin{equation}\label{3}
f_{w \overline{w}} = \mu_{\overline{w}} W(a,b) + \mu W_{\overline{w}}(a,b) = \alpha L_{0}(b) + \beta L_{3}(a).
\end{equation}

Since $\lpr{W}{W} = 0$ it follows that $\lpr{W_{\overline{w}}}{W} = 0$, and hence  
$\lpr{f_{w \overline{w}}}{\overline{W}} = 0$. Now since 
$$W(a,b) = \weis{a}{b} = a(1,b,-ib,1) + (b,1,i,-b),$$
$$W_{w} = a_{w}(1,b,-ib,1) + b_{w}(1,a,-ia,-1),$$ 
$$W_{\overline{w}} = a_{\overline{w}}(1,b,-ib,1) + b_{\overline{w}}(1,a,-ia,-1),$$
$$L_{3}(a) = \nlight{a} = \overline{a} (a,1,i,a) + (1,a,-ia,-1),$$
$$L_{0}(b) = \slight{b} = \overline{b} (b,1,i,-b) + (1,b,-ib,1),$$  
we have in particular that $\lpr{L_{3}(a)}{L_{0}(b)} = -2(1 - a \overline{b})(1 - b \overline{a})$ and 
$$\lpr{W_{\overline{w}}}{L_{3}(a)} = 
-2 a_{\overline{w}} (1 - b \overline{a}), \ \ \ \ \lpr{W_{\overline{w}}}{L_{0}(b)} = -2 b_{\overline{w}} (1 - a \overline{b}).$$ 

Hence it follows from equation (\ref{3}) that $\alpha \lpr{L_{3}(a)}{L_{0}(b)} = \lpr{\mu W_{\overline{w}}}{L_{3}(a)}$ and then $$\alpha = 
\frac{\mu a_{\overline{w}}}{1 - a \overline{b}}.$$
In similar way one gets all the expressions of item 1. 

2) \ This is trivial, remembering that $\overline{a_{\overline{w}}} = \overline{a}_w$.

3) \ Since $\lpr{f_{w \overline{w}}}{\overline{W}} = 0$, it follows from Equation (\ref{3}) and from the expressions above, that 
$$\frac{\partial}{\partial \overline{w}} \Log \mu = - \frac{\lpr{W_{\overline{w}}}{\overline{W}}}{\lpr{W}{\overline{W}}} =
\frac{\overline{b} a_{\overline{w}}}{1 - a \overline{b}} + \frac{\overline{a} b_{\overline{w}}}{1 - b \overline{a}}.$$
\end{proof}

Our next result partially solves the question established in section above. More specifically,  it shows that the conditions which guarantee the existence of a conformal spacelike immersion 
$ (M,f)$ from $M$ into $\mathbb L^4$, such that $$f_w(w) = \mu W(a,b)=  \mu (a + b,1 + ab,i (1-ab),a- b),$$ for $a,b : M \to \mathbb C$ two smooth maps, are the existence of solutions of the PDEs system form by Equation (\ref{2}) together with the compatibility conditions (\ref{1}).

\begin{theor}\label{110}
Let $a,b$ maps from $M$ to $\complex$ such that $a(w) \overline{b(w)} \neq 1$ for each $w \in M$. 
If $\mu$ is a solution for the equation (\ref{2}) and the compatibility conditions (\ref{1}) hold, then 
the map 
$$f(w) = p_{0} + 2 \Re \int_{w_{0}}^{w} \mu(\xi) W(a(\xi),b(\xi)) d \xi,$$
is a conformal immersion $(M,f)$ from $M$ in $\mathbb L^4$ such that $f_{w}(w) = \mu(w) W(a(w),b(w))$.
\end{theor}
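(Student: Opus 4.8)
The plan is to reduce everything to a no-real-periods statement for the four component $1$-forms $\beta_j = \mu\,W_j(a,b)\,dw$, $j=0,1,2,3$, and then invoke Proposition \ref{119}. By that proposition, each $\beta_j$ has no real periods exactly when $(\mu W_j)_{\overline{w}}$ is real, and in that case $\frac{\partial}{\partial w}\bigl(2\Re\int_{w_0}^{w}\beta_j\bigr)=\mu W_j$. Since $M$ is simply connected, the no-real-periods condition makes each line integral path-independent, so $f$ is well defined, real valued (it is $p_0$ plus twice a real part), and satisfies $f_w=\mu W(a,b)$. Hence the whole theorem collapses to the single vector assertion that $(\mu W)_{\overline{w}}=\mu_{\overline{w}}W+\mu W_{\overline{w}}$ is a real vector, together with a routine conformality check at the end.

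To prove that vector assertion I would expand $(\mu W)_{\overline{w}}$ in the basis $\{W,\overline{W},L_0(b),L_3(a)\}$ of $\complex^{4}$; this is a genuine basis because $\lpr{W}{\overline{W}}=2(1-a\overline{b})(1-b\overline{a})\neq 0$ forces $W,\overline{W}$ to span the complexified plane, while $\lpr{L_0(b)}{L_3(a)}\neq 0$ makes $L_0,L_3$ span the complexified orthogonal complement. Differentiating $\lpr{W}{W}=0$ yields $\lpr{W_{\overline{w}}}{W}=0$, which annihilates the $\overline{W}$-component. The $W$-component equals $\mu$ times $\bigl(\frac{\partial}{\partial\overline{w}}\Log\mu-\frac{\overline{b}a_{\overline{w}}}{1-a\overline{b}}-\frac{\overline{a}b_{\overline{w}}}{1-b\overline{a}}\bigr)$, which is exactly the defect in Equation (\ref{2}) and hence vanishes by hypothesis. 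Therefore $(\mu W)_{\overline{w}}=\alpha L_0(b)+\beta L_3(a)$, and the same inner-product computations already carried out in Theorem \ref{1.11}(1) identify $\alpha=\frac{\mu a_{\overline{w}}}{1-a\overline{b}}$ and $\beta=\frac{\mu b_{\overline{w}}}{1-b\overline{a}}$.

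Now $L_0(b)$ and $L_3(a)$ are real vectors that are linearly independent over $\reals$, so $\alpha L_0(b)+\beta L_3(a)$ is real if and only if $\alpha,\beta\in\reals$. But the compatibility conditions (\ref{1}) read precisely $\alpha=\overline{\alpha}$ and $\beta=\overline{\beta}$ (using $\overline{a_{\overline{w}}}=\overline{a}_w$), so $\alpha,\beta$ are real and $(\mu W)_{\overline{w}}$ is real componentwise. This gives the no-real-periods condition for every $\beta_j$, hence $f_w=\mu W(a,b)$ as above. For conformality I would compute directly from $f_w=\mu W$: since $\lpr{W}{W}=0$ one has $\lpr{f_w}{f_w}=\mu^{2}\lpr{W}{W}=0$, and $\lpr{f_w}{\overline{f_w}}=|\mu|^{2}\lpr{W}{\overline{W}}=2|\mu|^{2}(1-a\overline{b})(1-b\overline{a})$, which is strictly positive because $a\overline{b}\neq 1$ and $\mu$ is nonvanishing (it solves the logarithmic Equation (\ref{2})). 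Setting $r^{2}=4|\mu|^{2}(1-a\overline{b})(1-b\overline{a})>0$ verifies the defining conditions (1); positivity of $r^{2}$ also makes $f_u,f_v$ orthogonal and nonzero, so $f$ is a spacelike immersion.

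I expect the main obstacle to be organizational rather than conceptual: the crux is recognizing that Equation (\ref{2}) is exactly the statement that the $W$-component of $(\mu W)_{\overline{w}}$ vanishes, while the compatibility conditions (\ref{1}) are exactly the reality of the normal coefficients $\alpha,\beta$. In effect the computations of Theorem \ref{1.11} are being read backwards, so that what was there \emph{derived from} an existing conformal immersion is here \emph{used to construct} one via integration; the only genuinely new ingredient is the passage through Proposition \ref{119} to guarantee integrability and path-independence.
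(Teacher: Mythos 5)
Your proposal is correct and follows essentially the same route as the paper: both show that Equation (\ref{2}) eliminates the tangential ($W$) component of $(\mu W)_{\overline{w}}$, leaving $\frac{\mu a_{\overline{w}}}{1-a\overline{b}}L_{0}(b)+\frac{\mu b_{\overline{w}}}{1-b\overline{a}}L_{3}(a)$, which the compatibility conditions (\ref{1}) make real, so that Proposition \ref{119} yields $f_{w}=\mu W(a,b)$ and conformality follows from $\lpr{W}{W}=0$ and $\lpr{W}{\overline{W}}=2(1-a\overline{b})(1-b\overline{a})>0$. Your expansion in the basis $\{W,\overline{W},L_{0},L_{3}\}$ is just a slightly more explicit organization of the same computation the paper performs.
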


\begin{proof} 
Set  
$$\beta = \Phi dw = \mu W dw = (\mu (a + b) dw, \mu(1 + ab) dw, i\mu(1 - ab)dw, \mu(a - b)dw).$$ 
Since $\mu$ is a solution of the equation (\ref{2}) we have that 
$$\Phi_{\overline{w}} = \mu \left(- \frac{\lpr{\overline{W}}{W_{\overline{w}}}}{\lpr{W}{\overline{W}}} W + W_{\overline{w}}\right) = 
\frac{\mu a_{\overline{w}}}{1 - a \overline{b}}L_{0}(b) + \frac{\mu b_{\overline{w}}}{1 - b \overline{a}}L_{3}(a),$$  
which, by equations (\ref{1}), is a 
real valued vector for each $w \in M$.  It follows that $(M,f)$ is an immersion from $M$ in $\mathbb L^4$, for which, by Proposition \ref{119},  $f_{w}(w) = \mu(w)W(a(w),b(w))$, and so it also is conformal. In fact, 
one has that $f_{w\overline{w}}(w) = \Phi_{\overline{w}}(w)$ is a real valued vector of $\mathbb L^4,$ orthogonal to the spacelike plane $[\Re(W(a(w),b(w))),\Im(W(a(w),b(w)))] \subset \mathbb L^4$, \ with  $\left<f_w, f_w \right>=0$ and $\left<f_{w}, \bar{f_w} \right> = \frac{r^2}{2}$ for $r(w) >0$.   
\end{proof}

\begin{prop}\label{23}
Let $a$ and $b$ be two anti-holomorphic maps from $M$ into $\mathbb C$. For each holomorphic map
$h$ from $M$ into $\mathbb C$ we have that
$$\mu (w) = \frac{h(w)}{|1 - \overline {a(w)} b(w)|^2}$$
is a solution for the equation (\ref{2}). If $a$ and $b$ satisfy  the equation of compatibility (\ref{1}), then $h \ne  0$ is a real number.
\end{prop}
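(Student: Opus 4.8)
The plan is to handle the two assertions separately, since the first is a direct verification while the second is where the substance lies.

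For the first assertion I would begin by unpacking anti-holomorphicity: since $a,b\in\overline{\mathcal H}(M)$, their conjugates $\overline a,\overline b$ are holomorphic, so $a_w=b_w=0$ and $\overline a_{\overline w}=\overline b_{\overline w}=0$. Writing $\mu=h/\big((1-\overline a b)(1-a\overline b)\big)$, I would compute $\partial_{\overline w}\Log\mu=\partial_{\overline w}\Log h-\partial_{\overline w}\Log(1-\overline a b)-\partial_{\overline w}\Log(1-a\overline b)$. The first term vanishes because $h$ is holomorphic, and in the other two the terms carrying $\overline a_{\overline w}$ and $\overline b_{\overline w}$ drop out, leaving exactly $\frac{\overline b\, a_{\overline w}}{1-a\overline b}+\frac{\overline a\, b_{\overline w}}{1-b\overline a}$, which is the right-hand side of equation (\ref{2}). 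This shows $\mu$ solves (\ref{2}) for every holomorphic $h$.

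For the second assertion I would substitute $\mu$ and $\overline\mu=\overline h/\big((1-\overline a b)(1-a\overline b)\big)$ into the compatibility equations (\ref{1}). Using $1-b\overline a=1-\overline a b$, the fact that $|1-\overline a b|^2$ is real and positive, and the identity $\overline a_w=\overline{a_{\overline w}}$, the common denominators cancel and the first equation of (\ref{1}) collapses to $h\,a_{\overline w}(1-\overline a b)=\overline{h\,a_{\overline w}(1-\overline a b)}$; that is, $h\,a_{\overline w}(1-\overline a b)$ is real-valued. Symmetrically, the second equation of (\ref{1}) says that $h\,b_{\overline w}(1-a\overline b)$ is real-valued.

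Finally I would extract the consequence for $h$. Multiplying the two reality conditions and dividing by the positive factor $|1-\overline a b|^2$ shows that $h^2\,a_{\overline w}\,b_{\overline w}$ is real-valued. Here $F:=h^2$ is holomorphic while $G:=a_{\overline w}\,b_{\overline w}$ is anti-holomorphic, so the relation $FG=\overline{FG}$ reads $F/\overline G=\overline F/G$; the left-hand side is holomorphic and the right-hand side anti-holomorphic, hence both equal a constant, which comparison with its own conjugate forces to be real. Feeding this back into the individual reality conditions, and using that $a$ and $b$ are non-constant (so that $a_{\overline w}$ and $b_{\overline w}$ vanish only on a discrete set) together with the classical fact that a holomorphic function which is real-valued on a connected open set is constant, I would conclude that $h$ is a real constant; it is nonzero because $\mu$ is the integrating factor of an immersion. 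The main obstacle is precisely this last step: the reduction to reality conditions is routine, but passing from "$h\,a_{\overline w}(1-\overline a b)$ and $h\,b_{\overline w}(1-a\overline b)$ are real" to "$h$ is a nonzero real constant" must exploit the holomorphic/anti-holomorphic rigidity carefully rather than a naive argument on arguments, and this is where I would concentrate the effort and cross-check against the consequence (\ref{1.1}).
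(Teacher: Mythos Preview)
The paper states this proposition without proof, so there is no argument of its own to compare against directly. Your verification of the first assertion is correct and is the natural computation.

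For the second assertion, your reduction to the reality of $h\,a_{\overline w}(1-\overline a b)$ and of $h\,b_{\overline w}(1-a\overline b)$ is correct, as is the deduction that $h^{2}=c\,\overline a_{w}\,\overline b_{w}$ for some real constant $c$. But the ``feeding back'' step you flag as the main obstacle is a genuine gap, and it cannot be closed in the stated generality. Take $a=b=i\,\overline w$ on a small disk about the origin: both maps are anti-holomorphic and non-constant, $1-a\overline b=1-\overline a b=1-|w|^{2}$ is real, condition (\ref{1.1}) holds (each side equals $(1-|w|^{2})^{-2}$), yet substituting $\mu=h/(1-|w|^{2})^{2}$ into (\ref{1}) gives $ih=-i\overline h$, which forces $h$ to be a purely \emph{imaginary} constant. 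Your own relation $h^{2}=c\,\overline a_{w}\,\overline b_{w}=c\cdot(-i)(-i)=-c$ is consistent with this and shows exactly why the argument cannot be pushed to $h\in\mathbb R$.

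What your method does establish rigorously is that $h$ must be \emph{constant}. The paper's nearby Theorem \ref{16}(2), which carries a short proof, takes a simpler route: compatibility says $\dfrac{\mu\,a_{\overline w}}{1-a\overline b}$ is real, so the ratio of any two admissible integrating factors is a real-valued holomorphic function, hence a real constant. That argument, like yours, yields uniqueness of $h$ up to a real scalar but does not by itself force $h\in\mathbb R$; the second sentence of the proposition appears to be stated somewhat loosely.
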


\begin{theor}\label{16}
If $\mu_{0}$ and $\mu_{1}$ are two solution of the equation (\ref{2}), then there exists a holomorphic map $h(w)$ from $M$ 
into $\complex$, such that $\mu_{1}(w) = h(w) \mu_{0}(w)$ for all $w \in M$. 

Moreover,  one has the following two cases.
\begin{enumerate}
\item  If $a,b \in \mathcal{H}(M)$ then $\mu \in \mathcal{H}(M)$, and the compatibility conditions (\ref{1}) hold trivially.

\item \ If $a$ or $b$ is not a holomorphic map and the compatibility conditions (\ref{1}) hold, then 
$\frac{\mu_{1}}{\mu_{0}} = h$ is a constant real valued map. 
\end{enumerate}
\end{theor}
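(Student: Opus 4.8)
The plan is to exploit the fact that equation (\ref{2}) determines $\mu$ only up to a holomorphic factor, because its right-hand side depends solely on $a$ and $b$. Interpreting $\frac{\partial}{\partial \overline{w}}\Log\mu$ as $\mu_{\overline{w}}/\mu$ (legitimate since the standing assumption that ${f_1}_w - i {f_2}_w \neq 0$ makes $\mu = ({f_1}_w - i{f_2}_w)/2$ nowhere zero), I would first abbreviate the right side of (\ref{2}) as $\Theta = \frac{\overline{b} a_{\overline{w}}}{1 - a\overline{b}} + \frac{\overline{a} b_{\overline{w}}}{1 - b\overline{a}}$, a function of $a$ and $b$ alone. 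Since $\mu_0$ and $\mu_1$ both solve (\ref{2}), we have $(\mu_0)_{\overline{w}}/\mu_0 = (\mu_1)_{\overline{w}}/\mu_1 = \Theta$. Setting $h = \mu_1/\mu_0$, the quotient rule gives $h_{\overline{w}} = h\big((\mu_1)_{\overline{w}}/\mu_1 - (\mu_0)_{\overline{w}}/\mu_0\big) = 0$, so $h$ is holomorphic and $\mu_1 = h\mu_0$. This settles the first assertion.

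For case (1), if $a,b \in \mathcal{H}(M)$ then $a_{\overline{w}} = b_{\overline{w}} = 0$, hence $\Theta = 0$; the defining equation then reads $\mu_{\overline{w}} = 0$, so every solution $\mu$ is holomorphic. The compatibility conditions (\ref{1}) hold trivially because, using $\overline{a}_w = \overline{a_{\overline{w}}}$ and $\overline{b}_w = \overline{b_{\overline{w}}}$, all four terms vanish identically.

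The substance lies in case (2), where I would extract a reality constraint from the compatibility conditions. Writing $P = \frac{a_{\overline{w}}}{1 - a\overline{b}}$ and using $\overline{1 - a\overline{b}} = 1 - b\overline{a}$ together with $\overline{a}_w = \overline{a_{\overline{w}}}$, one checks $\overline{P} = \frac{\overline{a}_w}{1 - b\overline{a}}$, so the first equation in (\ref{1}) becomes precisely $\mu P = \overline{\mu P}$, i.e. $\mu P \in \reals$ for every solution $\mu$; the second equation similarly gives $\mu Q \in \reals$, where $Q = \frac{b_{\overline{w}}}{1 - b\overline{a}}$. Since $a$ or $b$ fails to be holomorphic, at least one of $a_{\overline{w}}, b_{\overline{w}}$ is not identically zero; as $1 - a\overline{b}$ never vanishes, the corresponding $P$ (or $Q$) is nonzero on a nonempty open subset $U \subset M$. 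On $U$ both $\mu_0 P$ and $\mu_1 P$ are real, whence $h = \mu_1/\mu_0 = (\mu_1 P)/(\mu_0 P)$ is real-valued there.

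Finally I would invoke that a holomorphic function assuming only real values on the open set $U$ is locally constant (by the Cauchy--Riemann equations), and then the identity theorem on the connected domain $M$ forces $h$ to equal this real constant throughout $M$. The points requiring care are the interpretation of $\frac{\partial}{\partial \overline{w}}\Log\mu$ as $\mu_{\overline{w}}/\mu$ with $\mu$ nonvanishing, and the observation that ``$a$ or $b$ not holomorphic'' genuinely produces an open set on which $P$ or $Q$ is nonzero --- this is exactly what lets me upgrade a pointwise reality statement into global constancy of the holomorphic ratio $h$.
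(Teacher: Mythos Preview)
Your proposal is correct and follows essentially the same route as the paper: form the ratio $h=\mu_1/\mu_0$, use equation (\ref{2}) to get $h_{\overline w}=0$, and then in case (2) use the compatibility condition (\ref{1}) to force $h$ real-valued, hence constant. You are in fact slightly more careful than the paper in noting that $a_{\overline w}$ (or $b_{\overline w}$) need only be nonzero on an open subset and then invoking the identity theorem; the paper simply writes ``$a_{\overline w}\neq 0$'' and proceeds as if this held everywhere.
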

\begin{proof}
Since $\mu_{0}$ and $\mu_{1}$ are two solutions of  (\ref{2})  it follows that $(\Log \mu_{1} - \Log \mu_{0})_{\overline{w}} = 0$.
Hence we have the holomorphicity $\frac{\mu_{1}}{\mu_{0}} = h \in \mathcal{H}(M)$. 

 (1) \ It is trivial.

(2) \ Assume without loss of generality that the map $a$ is not holomorphic, which means $a_{\overline{w}} \neq 0$. Then using equation
(\ref{1}) we have that  both maps 
 $\frac{\mu_{0} a_{\overline{w}}}{1 - a \overline{b}}$ and 
$\frac{h \mu_{0} a_{\overline{w}}}{1 - a \overline{b}}$ are real valued, which implies that $h(w)$ is a holomorphic real 
valued map, and so  $h_w = 0$. Hence $h$ is a constant (nonzero) real number.      
\end{proof}

\begin{corol}
Let $(M,f)$ and $(M,g)$ be two conformal immersions given respectively by 
$$f(w) = p_{0} + 2 \Re \int_{w_{0}}^{w} \mu_{0}(\xi) W(a(\xi),b(\xi)) d \xi \; \; \mbox{ and } \; \; 
g(w) = p_{1} + 2 \Re  \int_{w_{1}}^{w} \mu_{1}(\xi) W(a(\xi),b(\xi)) d \xi.$$ 
Then if $a(w)$ or $b(w)$ is not a holomorphic map,  there exists a real constant $k \neq 0$ such that 
\ $g(w) = (p_{1} -  k \; p_{0}) + k f(w)$ \ for each $w \in M$. 
\vspace{0.1cm}

Moreover, if $ds^{2}(f) = ds^{2}(g)$ and $k>0$,  then $g(w) = (p_{1} - p_{0}) + f(w)$, and so these two surfaces are 
congruent.
\end{corol}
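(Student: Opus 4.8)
The plan is to exploit Theorem \ref{16} directly, since it already does the analytic heavy lifting. The two immersions $f$ and $g$ share the same pair of functions $(a,b)$, so their integration factors $\mu_0$ and $\mu_1$ are both solutions of the single equation (\ref{2}). By Theorem \ref{16}, there is a holomorphic $h$ with $\mu_1 = h\,\mu_0$. The hypothesis that $a$ or $b$ fails to be holomorphic places us squarely in case (2) of that theorem, whence $h \equiv k$ is a nonzero real constant.

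With $k$ in hand, the first main step is to pass from the relation between the factors to a relation between the immersions. I would write $g_w = \mu_1 W(a,b) = k\,\mu_0 W(a,b) = k\,f_w$, and likewise $g_{\overline{w}} = k\,f_{\overline{w}}$ because $k$ is a real constant (so conjugation leaves it fixed and it commutes past the $\overline{w}$-derivative). Thus the real differential of $g - k f$ vanishes identically on the connected set $M$, forcing $g - k f$ to be a constant vector $c \in \lorentz^4$. Evaluating the constant is then a matter of the boundary data: from the integral formulas one reads off that $g$ and $kf$ differ by $p_1 - k\,p_0$ (the lower-limit/base-point contributions), giving $g(w) = (p_1 - k\,p_0) + k\,f(w)$.

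For the second assertion I would compute how the metric scales. From the Proposition giving $ds^2(f) = 4\mu\overline{\mu}(1 - a\overline{b})(1 - b\overline{a})\,dw\,d\overline{w}$, replacing $\mu_0$ by $\mu_1 = k\mu_0$ multiplies the conformal factor by $k\overline{k} = k^2$, so $ds^2(g) = k^2\,ds^2(f)$. The hypothesis $ds^2(f) = ds^2(g)$ then yields $k^2 = 1$, and combined with $k > 0$ this forces $k = 1$. Substituting $k = 1$ into the displayed relation gives $g(w) = (p_1 - p_0) + f(w)$, so the two surfaces differ by the single translation vector $p_1 - p_0$ and are therefore congruent.

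I expect the only genuinely delicate point to be the justification that $h$ is constant rather than merely holomorphic — but this is exactly what case (2) of Theorem \ref{16} supplies, so the work is already done; everything else is the bookkeeping of differentiating the Weierstrass-type integral representation and reading off the constant of integration from the base points. One should be mildly careful that $k$ being \emph{real} is what lets it commute with complex conjugation in the step $\overline{g_w} = k\,\overline{f_w}$, ensuring $g - kf$ has vanishing full (real) differential and not merely vanishing $w$-derivative.
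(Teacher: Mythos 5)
Your argument is correct and is exactly the one the paper intends: the corollary is stated immediately after Theorem \ref{16} with no separate proof, precisely because case (2) of that theorem gives $\mu_{1}=k\mu_{0}$ with $k$ a nonzero real constant, after which differentiating, using that $k$ is real to control both $\partial_{w}$ and $\partial_{\overline{w}}$, and comparing conformal factors ($ds^{2}(g)=k^{2}ds^{2}(f)$, so $k=1$ when $k>0$) yields the claim just as you wrote. The only point to keep in mind is that reading the constant of integration as $p_{1}-k\,p_{0}$ tacitly uses the base points $w_{0}$ and $w_{1}$ coinciding (otherwise the constant also absorbs $2k\,\Re\int_{w_{0}}^{w_{1}}\mu_{0}W\,d\xi$), a convention the paper's statement itself assumes.
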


\section{Conformal Immersions into the Future Directed Light Cone $\C$}

The future directed light cone $\C$ is the subset of the Minkowski space: 
$$\C = \{ L \in \lorentz^{4} | \lpr{L}{L} = 0, \ \;  L_{0} = - \lpr{L}{\partial_{0}} > 0 \}.$$
We note that if $A \in \C$ then there exist $\lambda \in \reals$ and $x \in \complex$, such that one of following expressions is obtained

$$ A = \lambda \nlight{x} \; \; \mbox{ or } \ \ A = \lambda (1,0,0,1), $$
$$ A = \lambda \slight{x} \; \; \mbox{ or } \; \; A = \lambda (1,0,0,-1).$$

\vspace{0.2cm}

\begin{lemma}\label{12}
For a connected open subset $M \subset \complex$, for a positive smooth function $\lambda: M \to \mathbb{R}$ and  
for a smooth function  $x: M \to \mathbb{C}$, let 
$$f(w) = \lambda (w) L(x(w)) = \lambda(w) \nlight{x(w)}$$ 
\vspace{0.1cm}
be a smooth map from $M$ in $\mink$. Then, $(M,f)$ is a conformal immersion from $M$ in the light cone 
$\C$ if, and only if, either 

\vspace{0.1cm}
{\rm (i)}  $x(w)$ is a holomorphic function from $M$  in $\mathbb{C}$ such that $\vert x_{w} \vert \neq 0$ for each $w \in M$, or 

\vspace{0.1cm}
{\rm (ii)}  $\overline{x}(w)$ is a holomorphic function (i.e \ $x(w)$ is anti-holomorphic function) from $M$ in 

\hspace{0.4cm} $\mathbb{C}$ such that $\vert \overline{x}_{w} \vert \neq 0$ for 
each $w \in M$.    
\end{lemma}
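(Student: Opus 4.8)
The plan is to reduce the two defining conditions of a conformal immersion into $\C$ to pointwise algebraic identities in the Wirtinger derivatives $x_{w}$ and $x_{\overline{w}}$, and then to promote the resulting pointwise dichotomy to a global one by invoking connectedness of $M$.

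First I would dispose of the membership in $\C$, which is automatic. The vector $L(x)=\nlight{x}$ has real entries, is null since $\lpr{L(x)}{L(x)}=0$, and is future directed; as $\lambda>0$, the map $f=\lambda L(x)$ therefore satisfies $\lpr{f}{f}=0$ and has positive zeroth component, so its image lies in $\C$ regardless of any further hypothesis on $x$. Thus the whole content of the statement is carried by the two conformality conditions $\lpr{f_{w}}{f_{w}}=0$ and $\lpr{f_{w}}{\overline{f_{w}}}>0$.

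Next I would differentiate. Setting $\eta=(x,1,i,x)$, so that $\overline{\eta}=(\overline{x},1,-i,\overline{x})$, a short computation yields $L(x)_{w}=x_{w}\,\overline{\eta}+\overline{x}_{w}\,\eta$ and hence $f_{w}=\lambda_{w}L(x)+\lambda\,(x_{w}\,\overline{\eta}+\overline{x}_{w}\,\eta)$. The linear-algebra facts I need, all checked by inspection, are that $\eta$ and $\overline{\eta}$ are null with $\lpr{\eta}{\overline{\eta}}=2$, and that $L(x)$ is real and orthogonal to $\eta$, to $\overline{\eta}$ and to itself. Because $L(x)$ is orthogonal to everything appearing in $f_{w}$ and $\overline{f_{w}}$, every term carrying $\lambda_{w}$ or $\lambda_{\overline{w}}$ drops out of both inner products, leaving $\lpr{f_{w}}{f_{w}}=4\lambda^{2}\,x_{w}\,\overline{x}_{w}$ and $\lpr{f_{w}}{\overline{f_{w}}}=2\lambda^{2}\,(|x_{w}|^{2}+|x_{\overline{w}}|^{2})$. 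Since $\lambda>0$, conformality is then equivalent to requiring, at every point, that $x_{w}\,\overline{x}_{w}=0$ (equivalently, since $\overline{x}_{w}=\overline{x_{\overline{w}}}$, that $x_{w}=0$ or $x_{\overline{w}}=0$) while $|x_{w}|^{2}+|x_{\overline{w}}|^{2}>0$; that is, at each $w$ exactly one of $x_{w}(w)$ and $x_{\overline{w}}(w)$ vanishes and the other is nonzero.

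The hard part is upgrading this pointwise alternative to the global dichotomy (i)/(ii), that is, excluding that $x$ be holomorphic on one part of $M$ and anti-holomorphic on another. Here I would argue by connectedness: the sets $U=\{w\in M : x_{\overline{w}}(w)=0\}$ and $N=\{w\in M : x_{w}(w)=0\}$ are zero sets of continuous functions, hence closed, while the conditions just obtained say precisely that $M=U\cup N$ with $U\cap N=\emptyset$. Each set is then the complement of the other and so is open as well as closed, and connectedness of $M$ forces one of them to equal $M$. If $U=M$ then $x_{\overline{w}}\equiv0$, so $x$ is holomorphic with $x_{w}$ nowhere zero, which is case (i); if $N=M$ then $x_{w}\equiv0$, so $x$ is anti-holomorphic and $|\overline{x}_{w}|=|x_{\overline{w}}|$ is nowhere zero, which is case (ii). Finally, the converse is the same computation read in reverse: under (i) one has $x_{\overline{w}}=0$ and $x_{w}\neq0$, whence $\lpr{f_{w}}{f_{w}}=0$ and $\lpr{f_{w}}{\overline{f_{w}}}=2\lambda^{2}|x_{w}|^{2}>0$, so $f$ is a conformal immersion into $\C$; case (ii) is symmetric.
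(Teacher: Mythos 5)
Your proof is correct and follows essentially the same route as the paper: reduce conformality to the pointwise conditions $x_{w}\overline{x}_{w}=0$ and $|x_{w}|^{2}+|\overline{x}_{w}|^{2}>0$ via the null frame $L(x),\eta,\overline{\eta}$, then use connectedness of $M$ to globalize the dichotomy (your phrasing of this step via the closed zero sets of $x_{w}$ and $x_{\overline{w}}$ is in fact a bit cleaner than the paper's argument with closures of $M_{1},M_{2}$, and your coefficient $4\lambda^{2}x_{w}\overline{x}_{w}$ is the correct one where the paper writes $2\lambda^{2}$, which is immaterial to the conclusion).
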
 

\begin{proof}
First we note that $(M,f)$ is a conformal immersion if, and only if, 
\begin{equation}\label{291}
 x_{w} {\overline{x}}_{w} = 0  \; \; \; \; \; \mbox{ and } \; \; \; \;  \; \vert x_{w} \vert + \vert \overline{x}_{w} \vert \neq 0.
 \end{equation}
 Indeed, $f_{w} = \lambda_{w} L(x(w)) + \lambda x_{w}(\overline{x},1,-i,\overline{x}) + 
\lambda {\overline{x}}_{w}(x,1,i,x)$ thus, $\lpr{f_{w}}{f_{w}} = 0$ if, and only if, 
$$\lambda^{2} x_{w} {\overline{x}}_{w} \lpr{(\overline{x},1,-i,\overline{x})}{(x,1,i,x)} = 2\lambda^{2} x_{w} {\overline{x}}_{w} = 0.$$

Now, $\overline{f_{w}} = f_{\overline{w}} = \lambda_{\overline{w}} L(x(w)) + 
\lambda x_{\overline{w}}(\overline{x},1,-i,\overline{x}) + \lambda {\overline{x}}_{\overline{w}}(x,1,i,x)$. Thus, 
$\lpr{f_{w}}{f_{\overline{w}}} > 0$ if, and only if,  
$$\lpr{f_{w}(w)}{\overline{f_{w}(w)}} = 2 \lambda^{2} (\vert x_{w}(w) \vert^{2} + \vert \overline{x}_{w}(w) \vert^{2}) > 0.$$ 
Hence we have the assured. 
 
 \vspace{0.1cm}
 
Moreover, equations (\ref{291}) hold if, and only if, the sets $M_{1}$ and $M_{2} = M \setminus M_{1}$ defined by  
$$M_{1} = \{p \in M : \overline{x}_{w}(p) = 0 \neq \vert x_{w}(p) \vert\} \; \; \; \mbox{ and } \;  \; \; 
M_{2} = \{p \in M : x_{w}(p) = 0 \neq \vert \overline{x}_{w}(p) \vert\},$$ 
are closed in $M$. Since, $x_{w}$, $\overline{x}_{w}$ are continuous functions, if, 
$$p \in (\overline{M_{1}} \cap M_{2}) \cup (\overline{M_{2}} \cap M_{1})$$ then, 
we obtain $x_{w}(p) = 0 = \overline{x}_{w}(p)$ that contradicts second equation of  (\ref{291}). Thus, equations (\ref{291}) hold in all $M$ if, and only if, 
$\overline{M_{1}} \cap \overline{M_{2}} = \emptyset$, what means that the connected open subset 
$M \subset \complex$ is a disjoint union of these two open and closed subsets $M_{1}$ and $M_{2}$. Then, item (i) holds if, and only if, 
$M_{1} = M$ and $M_{2} = \emptyset$, and item (ii) holds if, and only if, $M_{2} = M$ and $M_{1} = \emptyset$.     
\end{proof}

\begin{lemma}\label{14}
Let $(M,f)$ be a conformal immersion from $M$ into the light cone $\C$ where $f(w) = \lambda(w) L(x(w))$, with  $x(w) \neq 0$ and $\lambda_{w}(w) \neq 0$ for all $w \in M$. Hence 
\begin{enumerate}
\item \ If $x(w)$ is a holomorphic map from $M$ into 
$\complex$, then  
$$\mu(f) = \overline{x} \lambda_{w}, \; \; b(f) = \frac{1}{\overline{x}}, \; \; 
a(f) = x + \frac{\lambda x_{w}}{\lambda_{w}}, \; \mbox{ thus } \; 
f_{w} = \overline{x} \lambda_{w} \; W\left( x + \frac{\lambda x_{w}}{\lambda_{w}}, \frac{1}{\overline{x}}\right).$$ 

\item \ If $x(w)$ is an anti-holomorphic map from $M$ into $\complex$, then  
$$\mu(f) = (\overline{x} \lambda)_{w}, \; \; a(f) = x, \; \; 
b(f) = \frac{\lambda_{w}}{(\lambda \overline{x})_{w}}, \; \mbox{ thus } \; 
f_{w} = (\overline{x} \lambda)_{w} \; W\left( x, \frac{\lambda_{w}}{(\lambda \overline{x})_{w}}\right).$$
\end{enumerate}
\end{lemma}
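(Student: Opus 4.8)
The plan is to prove Lemma \ref{14} by direct computation, reducing it to the definitions of $\mu$, $a$, $b$ recorded in Proposition \ref{199} item (1) and equation (\ref{1'}). The hypotheses give $f(w) = \lambda(w) L(x(w))$ with $L(x) = \nlight{x}$, so everything flows from writing out $f_w$ explicitly and reading off the three quantities
$$
\mu = \frac{{f_1}_w - i {f_2}_w}{2}, \qquad a = \frac{{f_0}_w + {f_3}_w}{{f_1}_w - i {f_2}_w}, \qquad b = \frac{{f_0}_w - {f_3}_w}{{f_1}_w - i {f_2}_w}.
$$
The computation of $f_w$ has already been carried out inside the proof of Lemma \ref{12}: one has the clean decomposition
$$
f_w = \lambda_w\, L(x) + \lambda\, x_w\,(\overline{x},1,-i,\overline{x}) + \lambda\, \overline{x}_w\,(x,1,i,x).
$$
I would take this as my starting point, since it isolates the three vector directions and their scalar coefficients, and both cases of the lemma are obtained simply by killing one of the derivative terms.

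\emph{Case (1): $x$ holomorphic.} Here $\overline{x}_w = \overline{x_{\overline{w}}} = 0$, so the last term drops out and
$$
f_w = \lambda_w\,(1 + x\overline{x},\, x + \overline{x},\, -i(x - \overline{x}),\, -1 + x\overline{x}) + \lambda\, x_w\,(\overline{x},1,-i,\overline{x}).
$$
Then I would compute the second complex component minus $i$ times the third: from $L(x)$ this contributes $(x+\overline{x}) - i\cdot(-i)(x-\overline{x}) = 2\overline{x}$, and from the $(\overline{x},1,-i,\overline{x})$ term it contributes $1 - i(-i) = 0$. Hence ${f_1}_w - i{f_2}_w = 2\overline{x}\,\lambda_w$, giving $\mu = \overline{x}\,\lambda_w$ immediately. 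Next, ${f_0}_w + {f_3}_w$ collects the first-plus-fourth components: $L(x)$ gives $(1+x\overline{x}) + (-1+x\overline{x}) = 2x\overline{x}$ and the derivative term gives $\overline{x}+\overline{x}=2\overline{x}$, so ${f_0}_w + {f_3}_w = 2x\overline{x}\,\lambda_w + 2\overline{x}\,\lambda x_w = 2\overline{x}(x\lambda_w + \lambda x_w)$. Dividing by $2\overline{x}\lambda_w$ yields $a = x + \lambda x_w/\lambda_w$, as claimed. Similarly ${f_0}_w - {f_3}_w$ gives $2\lambda_w$ from $L(x)$ (the $x\overline{x}$ terms cancel) and $0$ from the derivative term, so $b = 2\lambda_w/(2\overline{x}\lambda_w) = 1/\overline{x}$. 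The factored form $f_w = \mu\,W(a,b)$ then follows from Proposition \ref{199}.

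\emph{Case (2): $x$ anti-holomorphic.} Now $x_w = 0$, so instead the middle term drops and $f_w = \lambda_w L(x) + \lambda\,\overline{x}_w\,(x,1,i,x)$. I would run the identical three combinations. This time ${f_1}_w - i{f_2}_w$ picks up $2\overline{x}\lambda_w$ from $L(x)$ and, from $(x,1,i,x)$, the contribution $1 - i\cdot i = 2$, giving $2\overline{x}\lambda_w + 2\lambda\overline{x}_w = 2(\overline{x}\lambda)_w$ after recognizing the product rule (using $\overline{x}_w = (\overline{x})_w$ since $\lambda$ is real and $x_w = 0$ makes $(\overline{x}\lambda)_w = \overline{x}\lambda_w + \lambda\overline{x}_w$). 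Hence $\mu = (\overline{x}\lambda)_w$. Then ${f_0}_w + {f_3}_w = 2x\overline{x}\lambda_w + 2x\lambda\overline{x}_w = 2x(\overline{x}\lambda)_w$, so $a = x$; and ${f_0}_w - {f_3}_w = 2\lambda_w + 0 = 2\lambda_w$, so $b = \lambda_w/(\lambda\overline{x})_w$. The decomposition $f_w = \mu W(a,b)$ again follows.

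\textbf{Main obstacle.} There is no deep difficulty here: the lemma is essentially a bookkeeping exercise once $f_w$ is expanded, and the only point requiring genuine care is correctly tracking the two linear combinations ${f_0}_w \pm {f_3}_w$ and the combination ${f_1}_w - i{f_2}_w$ through the coefficients of $L(x)$ and of the single surviving derivative vector. The subtlety I would flag is the hypothesis $\lambda_w \neq 0$: it is exactly what guarantees the denominator ${f_1}_w - i{f_2}_w = 2\overline{x}\lambda_w$ (resp. $2(\overline{x}\lambda)_w$) is nonzero so that $a$, $b$, $\mu$ are well-defined, matching the standing assumption ${f_1}_w - i{f_2}_w \neq 0$ from the setup of equation (\ref{1'}); and $x \neq 0$ ensures $b = 1/\overline{x}$ makes sense in Case (1). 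I would state these checks explicitly rather than leave them implicit, since they are where the stated hypotheses are actually consumed.
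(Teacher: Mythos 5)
Your computation is correct and follows essentially the same route as the paper: both proofs just read off $\mu$, $a$, $b$ from the formulas (\ref{1'}) after expanding $f_{w}$, the paper doing it slightly more compactly by first recording the general identities ${f_0}_{w}+{f_3}_{w}=2(\lambda x\overline{x})_{w}$, ${f_0}_{w}-{f_3}_{w}=2\lambda_{w}$, ${f_1}_{w}-i{f_2}_{w}=2(\lambda\overline{x})_{w}$ and then specializing to each case. One small caveat on your closing remark: in case (2) the hypothesis $\lambda_{w}\neq 0$ does not by itself force $(\overline{x}\lambda)_{w}=\overline{x}_{w}\lambda+\overline{x}\lambda_{w}\neq 0$; the nonvanishing of ${f_1}_{w}-i{f_2}_{w}$ there is really the paper's standing assumption from the setup of (\ref{1'}), not a consequence of $\lambda_{w}\neq 0$ and $x\neq 0$ alone.
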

\begin{proof}
To show these facts we apply the formulas (\ref{1'}). Then we have
$${f_0}_{w} + {f_3}_{w} = 2 (\lambda x \overline{x})_{w}, \; \; \; \;  {f_0}_{w} - {f_3}_{w} = 2 \lambda_{w}, \; \; \; \; 
{f_1}_{w} - i {f_2}_{w} = 2 (\lambda \overline{x})_{w}.$$

So, if one assumes, for instance, that the map $x(w)$ is an anti-holomorphic map, 
it follows  
$$
a(f) = x, \; \; \; \; 
b(f) = \frac{f_{w}^{0} - f_{w}^{3}}{f_{w}^{1} - i f_{w}^{2}} =
\frac{\lambda_{w}}{\overline{x} \lambda_{w} + \overline{x}_{w} \lambda},
$$
and we have the expression for $f_w$ of item (2).
\end{proof}

We note that, the same conclusion of Lemma \ref{12} can be obtained  when it is assumed that  $f(w) = \rho (w) L_0 (x(w))$,  where by definition 
$$L_0 (x) = (1 + x \bar x, x + \bar x, -i(x - \bar x), 1 - x \bar x) \; \; \mbox{ for all } \; \; x \in \complex \; \mbox{ and } \; \rho  > 0.$$
In similar form we have a version of Lemma \ref{14} for this case, namely, 

\vspace{0.3cm}
(1) \   If $x(w)$ is a holomorphic map from $M$ into 
$\complex$, then  
\begin{equation} \label{15.1}
\mu(f) = \overline{x} \rho_{w}, \; \; a(f) = \frac{1}{\overline{x}}, \; \; 
b(f) = x + \frac{\rho x_{w}}{\rho_{w}}, \; \mbox{ thus } \; 
f_{w} = \overline{x} \rho_{w} \; W\left( \frac{1}{\overline{x}},   x + \frac{\rho x_{w}}{\rho_{w}} \right).
\end{equation}

\ (2) \ If $x(w)$ is an anti-holomorphic map from $M$ into $\complex$, then  
\begin{equation} \label{15}
\mu(f) = (\overline{x} \rho)_{w}, \; \; b(f) = x, \; \; 
a(f) = \frac{\rho_{w}}{(\rho \overline{x})_{w}}, \; \mbox{ thus } \; 
f_{w} = (\overline{x} \rho)_{w} \; W\left(\frac{\rho_{w}}{(\rho \overline{x})_{w}}, x \right).
\end{equation}

\begin{dfn}
A conformal immersion $(M,f)$ is totally umbilical if, and only if, $[f_{w w}(w)]^{\perp} = 0$ for all $w \in M$. 
Equivalently, for each unitary and orthogonal vector fields $\vec{n} \in [f_{w}]^{\perp}$, the shape 
operator $A_{f}^{(\vec{n})}$ is a multiple of the identity. \end{dfn}

Hence it follows, using Theorem 1.7, that the immersion  $(M,f)$ is totally umbilical if and only if $a_w = 0 = b_w$, that is 
$a, b \in {\overline{\mathcal{H}}}(M)$.

\vspace{0.3cm}

\begin{lemma}\label{19}
Let $a$, $b$ be two anti-holomorphic maps from $M$ into $\complex$ 
such that equation (\ref{1.1}) is satisfied. Then 
\begin{enumerate}
\item  It is possible to construct a conformal immersion $(M,f)$ from $M$ into the light cone $\C$, $f(w) = \lambda(w)L(a(w))$, 
such that $f_{w} = \mu(f) W(a,b)$ if, and only if, the function $\lambda(w)$ is real valued and satisfies the equation 
\begin{equation}\label{17}
\frac{\partial}{\partial w} \ln \lambda = \frac{\overline{a}_{w} b}{1 - \overline{a} b}.
\end{equation}

\item It is possible to construct a second conformal immersion $(M,g)$ from $M$ into the light cone $\C$, $g(w) = \rho(w) L_{0}(b(w))$,  
such that $g_{w} = \mu(g) W(a,b)$  if, and only if,the map $\rho(w)$ is real valued  and satisfies the equation
\begin{equation}\label{17.1}
\frac{\partial}{\partial w} \ln \rho = \frac{a\overline{b}_{w}}{1 - a\overline{b}}.
\end{equation}

\item For each $r \in \mathbb{R}$, $G(r,w) = p_{1} + f(w) + r g(w)$ is a conformal totally umbilical immersion in $\mathbb L^4$ such that 
$$G_{w} = (\mu(f) + r \mu(g))W(a,b).$$ 

\item  Given a totally umbilical immersion $(M,F)$ such that $F_w = \mu(F) W (a,b)$ there exist $r(F)  \in \mathbb R$ and a point $p_F \in
\mathbb L^4$ such that $G(r(F), w )= p_F + F(w)$.
\end{enumerate}
\end{lemma}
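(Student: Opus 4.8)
The plan is to prove the four assertions of Lemma~\ref{19} in order, leaning heavily on the machinery already assembled: Theorem~\ref{110} (Weierstrass reconstruction), Lemma~\ref{14} together with its $L_0$-analogue (\ref{15.1})--(\ref{15}), and the characterization $a,b\in\overline{\mathcal H}(M)$ of totally umbilical immersions. Throughout, since $a,b$ are anti-holomorphic we have $a_w=b_w=0$ and $\overline a_{\overline w}=\overline b_{\overline w}=0$, so $\overline a,\overline b$ are holomorphic; this collapses the integrability factor equation (\ref{2}) to $(\Log\mu)_{\overline w}=\overline b\,a_{\overline w}/(1-a\overline b)+\overline a\,b_{\overline w}/(1-b\overline a)$ and makes (\ref{1.1}) the standing hypothesis.

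\emph{Part (1).} First I would write $f(w)=\lambda(w)L(a(w))$ and impose that $f$ be conformal into $\C$; by Lemma~\ref{12} this forces $a$ holomorphic or $\overline a$ holomorphic, and since here $a\in\overline{\mathcal H}(M)$ we are in case (ii). Then I would invoke the anti-holomorphic branch of Lemma~\ref{14}, item (2), which gives $a(f)=x=a$ and $b(f)=\lambda_w/(\overline a\lambda_w+\overline a_w\lambda)$, together with $\mu(f)=(\overline a\lambda)_w$. The requirement $b(f)=b$ is then an equation for $\lambda$: cross-multiplying $\lambda_w=b(\overline a\lambda_w+\overline a_w\lambda)$ and solving for $(\ln\lambda)_w=\lambda_w/\lambda$ yields exactly (\ref{17}), namely $(\ln\lambda)_w=\overline a_w b/(1-\overline a b)$. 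The condition that $\lambda$ be real valued is what makes $f(M)\subset\C$ (positivity of the zeroth component); so the existence of the immersion is equivalent to a real solution $\lambda$ of (\ref{17}), and (\ref{1.1}) is precisely the compatibility/integrability condition guaranteeing such a real $\lambda$ exists. Part (2) is the mirror image: use $g(w)=\rho(w)L_0(b(w))$, apply the $L_0$-version (\ref{15}), match $a(g)=a$, and obtain (\ref{17.1}).

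\emph{Part (3).} Here I would simply add. Since $f_w=\mu(f)W(a,b)$ and $g_w=\mu(g)W(a,b)$ with the \emph{same} $W(a,b)$, linearity gives $G_w=(f+rg)_w=(\mu(f)+r\mu(g))W(a,b)$ for every $r\in\reals$. It remains to check that $G=p_1+f+rg$ is itself a conformal immersion, i.e. that $\mu(G):=\mu(f)+r\mu(g)$ is a genuine integrability factor: but any $\reals$-linear combination of solutions of the linear equation (\ref{2}) is again a solution (the right-hand side of (\ref{2}) depends only on $a,b$, not on $\mu$, and $(\Log\mu)_{\overline w}=\mu_{\overline w}/\mu$ makes the condition $\mu_{\overline w}=\mu\cdot(\text{fixed RHS})$ linear in $\mu$). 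Finally, $a_w=b_w=0$ forces $[G_{ww}]^\perp=0$ by the formula (2) of Theorem~\ref{1.11}, so $G$ is totally umbilical.

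\emph{Part (4).} This is the converse and the only real obstacle. Given a totally umbilical $(M,F)$ with $F_w=\mu(F)W(a,b)$, I would use Theorem~\ref{16}(2): because $a$ (or $b$) is not holomorphic, any two solutions of (\ref{2}) differ by a \emph{real constant}. Applying this with the reference solution $\mu(f)$ built in Part~(1) shows $\mu(F)=r(F)\,\mu(f)+$ (a multiple of $\mu(g)$); more precisely, $\mu(f)$ and $\mu(g)$ span the real solution space of (\ref{2}) up to the free real constant, so $\mu(F)=\mu(f)+r(F)\mu(g)=\mu(G(r(F),\cdot))$ for a suitable $r(F)\in\reals$. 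Since $F$ and $G(r(F),\cdot)$ then have identical $F_w=G_w$, they differ by a constant of integration, giving $G(r(F),w)=p_F+F(w)$ for some $p_F\in\lorentz^4$. The delicate point is verifying that $\mu(f),\mu(g)$ really exhaust the solution space and that the additive real constant from Theorem~\ref{16} is absorbed correctly into the parameter $r(F)$ rather than producing a spurious scaling; this requires checking that $\mu(g)/\mu(f)$ is non-constant, which follows from $a,b$ being non-constant anti-holomorphic maps and hence genuinely distinct data in (\ref{17}) versus (\ref{17.1}).
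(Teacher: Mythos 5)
Parts (1)--(3) of your proposal follow essentially the same route as the paper: apply Lemma \ref{14} (and its $L_{0}$-analogue (\ref{15})) with $x=a$, respectively $x=b$, equate $b(f)=b$, respectively $a(g)=a$, to obtain (\ref{17}) and (\ref{17.1}), use (\ref{1.1}) to ensure $\lambda,\rho$ can be taken real, and add the two immersions for (3). That part is fine.

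In Part (4), however, your closing ``delicate point'' is misconceived and, as stated, would fail. You claim the argument requires $\mu(g)/\mu(f)$ to be \emph{non-constant} and that this follows from $a,b$ being non-constant. The opposite is true: since $a,b\in\overline{\mathcal{H}}(M)$ are not holomorphic and the compatibility conditions hold, Theorem \ref{16}(2) forces $\mu(g)/\mu(f)$ to be a \emph{nonzero real constant}, so the verification you propose cannot succeed. Moreover it is not needed. The real solution space of (\ref{2}) is one-dimensional here (every solution is a constant real multiple of any fixed one), not something that $\mu(f),\mu(g)$ need to ``span'' independently. The correct, and much shorter, step --- which is exactly what the paper does --- is: by Theorem \ref{16}(2) write $\mu(f)=t\,\mu(F)$ and $\mu(g)=s\,\mu(F)$ with $t,s$ nonzero real constants; then $r(F)=\frac{1-t}{s}$ gives $\mu(f)+r(F)\mu(g)=\mu(F)$, hence $F_{w}=f_{w}+r(F)g_{w}=G_{w}(r(F),\cdot)$, and integration yields $G(r(F),w)=p_{F}+F(w)$ with $p_{F}$ determined by the values at $w_{0}$. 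So your overall strategy for (4) is the paper's, but the justification you flag as essential is both false and superfluous; only $s\neq 0$ (automatic, since $g$ is an immersion) is required. A further small point, common to your write-up and the paper's ``it is clear'' in (3): for $G(r,\cdot)$ to be an immersion one needs $\mu(f)+r\mu(g)=(1+rk)\mu(f)\neq 0$, which excludes the single value $r=-1/k$ where $k=\mu(g)/\mu(f)$.
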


\begin{proof}

(1) Take  the conformal immersions $(M,f)$ from $M$ into the light cone $\C$ defined by formula $f(w) = \lambda(w) L(a(w))$. Then since $a$ is an 
anti-holomorphic map, we have, by (2) in Lemma \ref{14} that $a(f) = a$,  and moreover that $b(f) = b$  if and only if $b = \frac{\lambda_{w}}{(\lambda\overline{a})_{w}}$. But the latter happens  if, and only if,
$$ 
\frac{\partial}{\partial w} \ln \lambda = \frac{\overline{a}_{w} b}{1 - \overline{a} b}.$$ 
Now from (\ref{1.1}) it follows that $(\ln \lambda)_{w \bar w}$ is a real valued function which implies that one can choose the function $\lambda(w)$ being real valued.

(2) It is analogous to item (2) using $L_o(b)$ in formula (\ref{15}) for this case. 

(3) It is clear.

(4) From (2) of Theorem \ref{16} it follows that: $\mu(f) = t \mu(F)$ and $\mu(g) = s \mu(F)$ for $t, s \in \mathbb R$  with $t \neq 0 \neq s$. Therefore, for $r(F) = \frac{1 - t}{s}$ we have that 
$F_{w} = f_{w} + r(F) g_{w}$.
So,  $p_{1} = 
p_{0} - f(w_{0}) - r(F) g(w_{0})$, which all implies that $F(w) = G(r(F),w)$.
\end{proof}

Note that, using the above conditions, $\lpr{f + rg}{f + rg} = - 4r \rho \lambda \vert 1 - a \overline{b} \vert^{2} = 0$ if, and only if $r = 0$

\begin{prop}\label{14.1}
Let $a,b$ be two anti-holomorphic maps from $M$ into $\mathbb{C}$. Then 
\begin{enumerate}
\item  Equation (\ref{17})  has real valued solution if and only if  $(\Log(1 - b \overline{a}))_{w \overline{w}} \in C^{\infty}(M,\mathbb{R}).$  This happens if only if condition  (\ref{1.1}) is satisfied.

\item If  $a$, $b$ and $\lambda$ satisfy the statement (1) of Lemma \ref{19}, 
then a real valued solution of equation (\ref{17}) is given by:
\begin{equation}\label{21.1}
\lambda = \frac{h}{\overline{a}_{w}(1 - a \overline{b})}, \; \; \mbox{ for some } \; \; 0 < h \in \mathbb{R}.
\end{equation}  
\end{enumerate}
\end{prop}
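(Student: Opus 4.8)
The plan is to handle the two items by different mechanisms: item (1) is a direct application of the period criterion of Proposition \ref{119}, while item (2) is obtained by an algebraic elimination that deliberately avoids substituting the proposed $\lambda$ into the equation.

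For item (1), I rewrite (\ref{17}) as $(\ln\lambda)_w=\Phi$ with $\Phi:=\frac{\overline a_w\,b}{1-\overline a b}$ its right-hand side. Finding a positive real $\lambda$ is the same as finding a real potential $\ln\lambda$ with $\partial_w(\ln\lambda)=\Phi$, and by Proposition \ref{119} this is possible exactly when the form $\Phi\,dw$ has no real periods, i.e. when $\Phi_{\overline w}$ is real; one then takes $\ln\lambda=2\Re\int_{w_0}^{w}\Phi\,dw$ and $\lambda=e^{\ln\lambda}>0$. Conversely, if a real $\lambda$ solves (\ref{17}) then $\Phi_{\overline w}=(\ln\lambda)_{w\overline w}=\tfrac14\Delta(\ln\lambda)$ is automatically real. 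So ``real solution exists'' is equivalent to ``$\Phi_{\overline w}$ real'', and it remains to identify this with the two stated conditions.

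Here the anti-holomorphy of $a,b$ does the work: $b_w=0$ and $\overline a_{\overline w}=0$. A one-line computation gives $(\Log(1-b\overline a))_w=\frac{-\overline a_w b}{1-\overline a b}=-\Phi$, hence $(\Log(1-b\overline a))_{w\overline w}=-\Phi_{\overline w}$, which matches the middle condition. Differentiating $\Phi$ once more and using $\overline a_{w\overline w}=0$ collapses the numerator (the factor $\overline D+\overline a b$ equals $1$, writing $\overline D=1-\overline a b$), leaving
$$\Phi_{\overline w}=\frac{\overline a_w\,b_{\overline w}}{(1-b\overline a)^{2}}.$$
Comparing with its conjugate $\frac{a_{\overline w}\,\overline b_w}{(1-a\overline b)^{2}}$ shows $\Phi_{\overline w}\in\mathbb{R}$ if and only if (\ref{1.1}) holds, closing the chain of equivalences.

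For item (2) the tempting move is to plug the proposed $\lambda$ into (\ref{17}); this brings in the second derivative $\overline a_{ww}$ and does not close using (\ref{1.1}) alone, so instead I use existence plus the structure of $\mu$. By item (1) and Lemma \ref{19}(1) the hypotheses produce a genuine conformal immersion $f=\lambda L(a)$ into $\C$ with $\mu(f)=(\overline a\lambda)_w$. Being conformal, $\mu(f)$ solves (\ref{2}) by Theorem \ref{1.11}(3) and the compatibility conditions (\ref{1}) hold by Theorem \ref{1.11}(2), so Proposition \ref{23} forces $\mu(f)=\frac{h}{(1-\overline a b)(1-a\overline b)}$ for a nonzero real constant $h$. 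Now I eliminate $\lambda_w$: from (\ref{17}) we have $\lambda_w=\frac{\overline a_w b}{1-\overline a b}\,\lambda$, so expanding and factoring yields $\overline a_w\lambda\bigl(1+\frac{\overline a b}{1-\overline a b}\bigr)=\frac{h}{(1-\overline a b)(1-a\overline b)}$. The crucial simplification is the identity $1+\frac{\overline a b}{1-\overline a b}=\frac{1}{1-\overline a b}$, which cancels $1-\overline a b$ and gives $\overline a_w\lambda=\frac{h}{1-a\overline b}$, that is the formula (\ref{21.1}); positivity and reality are inherited from the $\lambda$ built in item (1) (and this shows a posteriori that $\overline a_w(1-a\overline b)$ is real), with the sign of $h$ fixed by $\lambda>0$. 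The main obstacle is exactly recognizing that item (2) must be proved by this elimination rather than by direct verification, since the direct route demands a second-order identity in $\overline a_{ww}$ that (\ref{1.1}) does not supply.
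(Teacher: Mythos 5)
Your argument is correct and follows essentially the same route as the paper: in item (1) the same computation identifying $\Phi_{\overline w}=\frac{\overline a_w b_{\overline w}}{(1-\overline a b)^2}=-(\Log(1-b\overline a))_{w\overline w}$ and matching it with (\ref{1.1}) (with Proposition \ref{119} supplying the real potential, just as the paper does implicitly in Lemma \ref{19}), and in item (2) the same elimination of $\lambda_w$ from $\mu(f)=(\overline a\lambda)_w$ with the particular solution $\mu=h/\vert 1-\overline a b\vert^2$ of (\ref{2}) from Proposition \ref{23} (strictly, the claim that $\mu(f)$ must have this form also uses Theorem \ref{16}(2), which both you and the paper leave tacit). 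No gaps worth flagging.
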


\begin{proof}
(1) \  Since $a$ and $b$ are anti-holomorphic maps, we note that $$(\Log(1 - b \overline{a}))_{w \overline{w}} = - \frac{b_{\bar w} \bar a_w}{(1- b \bar a)^2}.$$ 
Hence it follows immediately that $(\Log(1 - b \overline{a}))_{w \overline{w}}\in C^{\infty}(M,\mathbb{R})$ if and only if  (\ref{1.1}) is satisfied. 

Now if $\lambda$ is solution of equation (\ref{17}) one has that
 $$(\Log(1 - b \overline{a}))_{w \overline{w}} = - (\ln \lambda)_{w \bar w} = - \frac{\lambda_{w \bar w} \lambda - \lambda_w \lambda_{\bar w}}{\lambda^2}.$$
Hence it follows that $\lambda$ is real valued if and only if $(\Log(1 - b \overline{a}))_{w \overline{w}}  \in C^{\infty}(M,\mathbb{R})$.
 
\vspace{0.3cm}

(2)\ By item 2 of Lemma \ref{14} , one has that $x = a$, \ $b(f) = b$, \ $\mu(f) = \overline{a}_{w} \lambda + \overline{a} \lambda_{w}$ with $\lambda_{w} = b \mu(f)$. Now from Proposition \ref{23}  we take the particular 
solution of Equation (\ref{2}) given by 
 $\mu = h/ \vert 1 - \overline{a}b\vert^{2}$ for someone positive $h \in \mathbb{R}$. Then from 
 $$
 \overline{a}_{w} \lambda + \overline{a} \lambda_{w} = h/ \vert 1 - \overline{a}b\vert^{2}
 $$
we get the particular solution $\lambda$ given by (\ref{21.1}).
\end{proof}


\begin{prop}\label{21}
Let $\vec v = (v_0, v_1, v_2, v_3)$ be an unitary vector in $\mathbb L^4$ and 
\newline  $V_0 = v_0 + v_3, \ V_3 = -v_0 + v_3$, \ $Z = v_1 + i v_2$. Then fixing an anti-holomorphic map $b \in \overline{\mathcal{H}}(M)$, the equation 
$$\frac{\partial}{\partial w} \ln \lambda = \frac{b{\overline{a}}_{w}}{1 - b \overline{a}} \; \; \mbox{ where } \; \; 
a(w) = \frac{V_{0} b(w) - Z}{V_{3} + \overline{Z} b(w)},$$
has a real valued solution given by 
$$\lambda = k \left( \frac{V_{3}^{2} + V_{3}(Z \overline{b} + \overline{Z} b) + Z \overline{Z} b \overline{b}}
{V_{3} + Z \overline{b} + \overline{Z} b - V_{0} b \overline{b}} \right), \ \  with \ \   k > 0.$$
\end{prop}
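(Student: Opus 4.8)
The plan is to verify the stated formula directly, since the hypothesis $a(w) = \frac{V_{0} b - Z}{V_{3} + \overline{Z} b}$ is exactly the M\"obius relation $a = M_{\vec v}(b)$ of Proposition \ref{0.1}, which places the surface in the hyperplane $H_{\vec v}$, and the displayed equation coincides with equation (\ref{17}). Rather than appeal to Proposition \ref{14.1}, I would substitute the candidate $\lambda$ into the equation and check both that it is real valued and that it satisfies the ODE. Three facts drive every simplification: the components $v_{j}$ are real, so $V_{0}, V_{3} \in \reals$ and $\overline{Z}$ is the conjugate of $Z$; the map $b$ is anti-holomorphic, so $b_{w} = 0$ and every $w$-derivative falls only on the holomorphic function $\overline{b}$; and $\vec v$ is unitary, so $V_{0} V_{3} + Z \overline{Z} = -v_{0}^{2} + v_{1}^{2} + v_{2}^{2} + v_{3}^{2} = \lpr{\vec v}{\vec v} = \epsilon$ with $\epsilon \in \{-1, +1\}$.

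First I would compute the right-hand side. Conjugating gives $\overline{a} = \frac{V_{0}\overline{b} - \overline{Z}}{V_{3} + Z\overline{b}}$, and differentiating in $w$ (only $\overline{b}$ varies) the cross terms cancel, leaving $\overline{a}_{w} = \frac{(V_{0} V_{3} + Z\overline{Z})\,\overline{b}_{w}}{(V_{3} + Z\overline{b})^{2}} = \frac{\epsilon\,\overline{b}_{w}}{(V_{3} + Z\overline{b})^{2}}$, which is where unitarity first enters. A short computation also gives $1 - b\overline{a} = \frac{D}{V_{3} + Z\overline{b}}$, where $D = V_{3} + Z\overline{b} + \overline{Z} b - V_{0} b\overline{b}$ is precisely the denominator appearing in the claimed $\lambda$. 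Hence the right-hand side of the equation equals $\frac{b\overline{a}_{w}}{1 - b\overline{a}} = \frac{\epsilon\, b\,\overline{b}_{w}}{(V_{3} + Z\overline{b})\,D}$.

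It then remains to show that $\frac{\partial}{\partial w}\ln\lambda$ equals this. The key observation is that the numerator of $\lambda$ factors as $N := V_{3}^{2} + V_{3}(Z\overline{b} + \overline{Z} b) + Z\overline{Z} b\overline{b} = (V_{3} + Z\overline{b})(V_{3} + \overline{Z} b) = |V_{3} + Z\overline{b}|^{2}$, which is manifestly real, while $D$ is manifestly real as well; this already proves that $\lambda$ is real valued. Writing $\frac{\partial}{\partial w}\ln\lambda = \frac{N_{w}}{N} - \frac{D_{w}}{D}$ and using $b_{w} = 0$ gives $\frac{N_{w}}{N} = \frac{Z\overline{b}_{w}}{V_{3} + Z\overline{b}}$ and $D_{w} = (Z - V_{0} b)\overline{b}_{w}$. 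Putting both terms over the common denominator $(V_{3} + Z\overline{b})D$, the numerator becomes $\overline{b}_{w}\bigl[ZD - (Z - V_{0} b)(V_{3} + Z\overline{b})\bigr]$, and the crux of the whole argument is the cancellation inside the bracket: upon expansion the pure $\overline{b}$ term and the two $b\overline{b}$ terms drop out, leaving $(V_{0} V_{3} + Z\overline{Z})b = \epsilon b$. This yields $\frac{\partial}{\partial w}\ln\lambda = \frac{\epsilon\, b\,\overline{b}_{w}}{(V_{3} + Z\overline{b})\,D}$, matching the right-hand side. I expect this bracket cancellation---together with the correct bookkeeping of which monomials survive and the timely use of $V_{0} V_{3} + Z\overline{Z} = \epsilon$---to be the only genuine obstacle; note that $\epsilon$ appears identically on both sides, so the conclusion holds for either sign of $\lpr{\vec v}{\vec v}$, and the constant $k > 0$ is free because the equation constrains only $\frac{\partial}{\partial w}\ln\lambda$.
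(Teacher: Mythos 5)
Your verification is correct. With $b_w=0$ one indeed gets $\overline{a}_w=(V_0V_3+Z\overline{Z})\,\overline{b}_w/(V_3+Z\overline{b})^2$ and $1-b\overline{a}=D/(V_3+Z\overline{b})$ with $D=V_3+Z\overline{b}+\overline{Z}b-V_0b\overline{b}$, the bracket $ZD-(Z-V_0b)(V_3+Z\overline{b})$ collapses to $(V_0V_3+Z\overline{Z})\,b$, so both sides equal $\epsilon\, b\,\overline{b}_w/((V_3+Z\overline{b})D)$, and the factorization $N=(V_3+Z\overline{b})(V_3+\overline{Z}b)$ with $N,D$ real gives the reality of $\lambda$. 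The route, however, differs from the paper's, which derives the formula rather than checking it: there one rewrites the right-hand side as $-\partial_w\Log(1-b\overline{a})$ (again because $b_w=0$), splits off the real factor $D$ to obtain $\partial_w\ln(\lambda D)=\partial_w\Log(V_3+Z\overline{b})$, and then uses that $\lambda D$ is real while the right side is holomorphic to integrate, concluding $\lambda D=k\,(V_3+Z\overline{b})(V_3+\overline{Z}b)$ with $k>0$. Your substitution argument is more elementary and avoids the integration step, but it presupposes the answer; the paper's argument shows where $\lambda$ comes from and, in effect, that this is the general real-valued solution up to the constant $k$. Note also that the unitarity of $\vec{v}$, which you invoke to set $V_0V_3+Z\overline{Z}=\epsilon=\pm1$, is not really needed: as you observe $\epsilon$ cancels between the two sides, and the paper's proof never uses it (it only guarantees that the M\"obius transformation relating $a$ and $b$ is nondegenerate).
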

\begin{proof} Since $b$ is anti-holomorphic map the given map $a(w)$ is also anti-holomorphic. Moreover,  
$$\frac{\partial}{\partial w} \ln \lambda = - \frac{\partial}{\partial w} \Log (1 - b \overline{a})
\; \; \mbox{ where } \; \; \lambda \in C^{\infty}(M,\reals),$$ 

$$- \Log(1 - b \overline{a}) = 
- \Log \left( \frac{V_{3} + Z \overline{b} + \overline{Z} b - V_{0} b \overline{b}}{V_{3} + Z \overline{b}} \right),$$ 
and   $V_{3} + Z \overline{b} + \overline{Z} b - V_{0} b \overline{b} \in \mathbb R$. Hence 
it follows that 
$$\frac{\partial}{\partial w} \ln(\lambda (V_{3} + Z \overline{b} + \overline{Z} b - V_{0} b \overline{b})) = 
\frac{\partial}{\partial w} \Log( V_{3} + Z \overline{b}).$$
Now since $\bar b$ and $Log$ are holomorphic maps  we have that $\frac{\partial}{\partial w} \Log( V_{3} + Z \overline{b})$ is also holomorphic. 
Therefore, $\ln(\lambda (V_{3} + Z \overline{b} + \overline{Z} b - V_{0} b \overline{b})) = \ln k +  \ln((V_{3} + Z \overline{b})(V_{3} + b \overline{Z}))$ for each $k > 0$.
\end{proof}

\begin{theor}\label{29}
Let $a,b \in \overline{\mathcal{H}}(M)$. If, the equations (\ref{17}) and (\ref{17.1}), namely, 
$$\frac{\partial}{\partial w} \ln \lambda = \frac{b{\overline{a}}_{w}}{1 - b \overline{a}} \; \; \mbox{ and } \; \;
\frac{\partial}{\partial w} \ln \rho = \frac{a{\overline{b}}_{w}}{1 - a \overline{b}}$$ 
can be solved for $\lambda$ and $\rho$ real valued functions, then there is a M$\ddot{o}$bius transformation $M_{\vec{v}}$ 
such that $a(w) = M_{\vec{v}}(b(w))$ for each $w \in M$. 
\end{theor}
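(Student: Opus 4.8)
The plan is to reduce the statement to a purely function-theoretic assertion about the holomorphic maps $\overline{a},\overline{b}$ and then solve it by polarization. First I would record that, by Proposition \ref{14.1}(1) together with its evident analogue for $\rho$, the solvability of (\ref{17}) and (\ref{17.1}) with real-valued $\lambda,\rho$ is equivalent to the single compatibility condition (\ref{1.1}); since the right-hand side of (\ref{1.1}) is the complex conjugate of the left-hand side, the hypothesis is exactly that $\frac{b_{\overline{w}}\,\overline{a}_{w}}{(1-b\overline{a})^{2}}$ is real on $M$. Setting $P=\overline{a}$ and $Q=\overline{b}$, which are holomorphic and (by our standing assumption) non-constant, and using $b_{\overline{w}}=\overline{Q_{w}}$, $\overline{a}_{w}=P_{w}$, $1-b\overline{a}=1-P\overline{Q}$, this becomes $\frac{P_{w}\,\overline{Q_{w}}}{(1-P\overline{Q})^{2}}\in\reals$.

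Next I would introduce a local coordinate. On the open dense set where $Q_{w}\neq0$ I can write $P=\Phi(Q)$ for a holomorphic $\Phi$, and after cancelling the positive factor $|Q_{w}|^{2}$ the condition reduces to $k(\zeta):=\frac{\Phi'(\zeta)}{(1-\Phi(\zeta)\overline{\zeta})^{2}}\in\reals$, where $\zeta=Q(w)$ ranges over an open subset of $\complex$. A direct computation gives $\partial_{\overline{\zeta}}\log k=\frac{2\Phi}{1-\Phi\overline{\zeta}}$; because $k$ is real this forces $\partial_{\zeta}\log k=\overline{\partial_{\overline{\zeta}}\log k}=\frac{2\overline{\Phi}}{1-\overline{\Phi}\zeta}$, whereas computing $\partial_{\zeta}\log k$ straight from the definition yields $\frac{\Phi''}{\Phi'}+\frac{2\Phi'\overline{\zeta}}{1-\Phi\overline{\zeta}}$. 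Equating the two expressions produces one real-analytic identity binding $\Phi,\Phi',\Phi''$ with $\overline{\Phi}$ and $\overline{\zeta}$.

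The key step, and the one I expect to be the main obstacle, is to polarize this identity. Replacing $\overline{\zeta}$ by an independent variable $\eta$ and $\overline{\Phi}$ by the reflected holomorphic map $\Phi^{*}(\eta)=\overline{\Phi(\overline{\eta})}$, the identity analytically continues to $\frac{\Phi''(\zeta)}{\Phi'(\zeta)}+\frac{2\Phi'(\zeta)\eta}{1-\Phi(\zeta)\eta}=\frac{2\Phi^{*}(\eta)}{1-\zeta\Phi^{*}(\eta)}$ for independent $\zeta,\eta$. Fixing a generic $\zeta$, the left-hand side is a fractional-linear function of $\eta$, while the right-hand side is the fractional-linear map $X\mapsto\frac{2X}{1-\zeta X}$ evaluated at $X=\Phi^{*}(\eta)$; inverting the latter exhibits $\Phi^{*}$, and hence $\Phi$, as a Möbius transformation. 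The delicate points here are justifying the continuation to independent variables and checking the non-degeneracy (guaranteed by $\Phi$ non-constant) needed to perform the inversion.

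Finally I would substitute the Möbius form $\Phi(\zeta)=\frac{\alpha\zeta+\beta}{\gamma\zeta+\delta}$ back into $k$, obtaining $k=\frac{\alpha\delta-\beta\gamma}{(\delta+\gamma\zeta-\beta\overline{\zeta}-\alpha|\zeta|^{2})^{2}}$. Reality of $k$ for all $\zeta$, after using the freedom to rescale the coefficients so that the denominator is real-valued, forces $\alpha,\delta\in\reals$ and $\gamma=-\overline{\beta}$. Writing $V_{0}=\alpha$, $V_{3}=\delta$, $Z=\gamma$ (so $\beta=-\overline{Z}$), this is precisely the normalization $V_{0}V_{3}+|Z|^{2}=\alpha\delta-\beta\gamma=\pm1$ that characterizes a unit vector $\vec{v}\in\lorentz^{4}$, and conjugating $P=\Phi(Q)$ gives $a=\frac{V_{0}\,b-Z}{V_{3}+\overline{Z}\,b}=M_{\vec{v}}(b)$, the Möbius transformation of Proposition \ref{0.1}(2). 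As this is an identity between anti-holomorphic functions valid on an open subset of the connected set $M$, it extends to all of $M$ by the identity theorem, completing the proof; Proposition \ref{21} furnishes exactly the converse computation.
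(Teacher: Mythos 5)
Your argument is correct, but it follows a genuinely different route from the paper. The paper's proof is geometric: using Lemma \ref{19} it builds the two light-cone immersions $f=\lambda L(a)$ and $g=\rho L_{0}(b)$, normalizes $\lambda\rho(1-a\overline{b})(1-b\overline{a})$ to a constant by differentiating its logarithm, forms $h=f+g$ (which lies in a pseudosphere) and, via Theorem \ref{16}, the field $n=f-kg$ with $n_{w}=0$; the constant unit normal $\vec{v}=n/|n|$ then yields $a=M_{\vec{v}}(b)$ by Proposition \ref{0.1}(2). You instead reduce the hypothesis, via Proposition \ref{14.1}, to the single reality condition (\ref{1.1}), write $\overline{a}=\Phi(\overline{b})$ locally where $\overline{b}_{w}\neq0$, and turn reality of $k(\zeta)=\Phi'(\zeta)/(1-\Phi(\zeta)\overline{\zeta})^{2}$ into a holomorphic identity in two independent variables by polarization; for fixed generic $\zeta$ both sides are fractional-linear in $\eta$ (the left side has determinant $-2\Phi'\neq0$, the right side determinant $2$), so $\Phi$ is forced to be a Möbius map, and imposing reality of $k$ on the Möbius form recovers, after a real and a unimodular rescaling of the coefficients, exactly the unitary vector $\vec{v}$ of Proposition \ref{0.1}(2) (the nonvanishing determinant $V_{0}V_{3}+|Z|^{2}=\lpr{\vec{v}}{\vec{v}}$ guarantees $\vec{v}$ is not lightlike, so it can be normalized to $\pm1$), with the identity theorem extending $a=M_{\vec{v}}(b)$ from the open set to all of $M$. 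The steps you flag as delicate are indeed the ones needing care --- the two-variable identity principle for vanishing on the anti-diagonal (valid near points where $\Phi'\neq0$ and $1-\Phi(\zeta)\overline{\zeta}\neq0$, the latter being the standing assumption $a\overline{b}\neq1$), and the connectedness argument that lets you rotate the denominator onto the real axis --- but they are standard. What each approach buys: the paper's construction stays entirely within its own machinery, produces the normal vector $n$ explicitly, and reads off the causal character of $\vec{v}$ from the sign of $k$; your argument is self-contained complex analysis, needs no auxiliary immersions, and isolates the purely function-theoretic content of the theorem (anti-holomorphic $a,b$ with the reality condition (\ref{1.1}) are necessarily Möbius-related), with Proposition \ref{21} supplying the converse, as you note.
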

\begin{proof}
We define $f(w) = \lambda(w) L(a(w))$ and $g(w) = \rho(w) L_{0}(b(w))$. Then by Lemma \ref{19}, 
we have two conformal immersions from $M$ into the light cone $\C$, such that
$$f_{w} = \eta W(a,b) \; \; \mbox{ and } \; \; g_{w} = \xi W(a,b).$$ 

We are assured it is possible to take $\lambda$ and $\rho$ such that $\lambda \rho (1 - a \overline{b})(1 - b \overline{a})$ is  a positive constant real valued function. 
Indeed, if we have two solutions of equation (\ref{17}), namely, $\lambda_{1}$ and $\lambda_{2}$  (similarly for equation (\ref{17.1})), then, 
there is a nonzero real number $t \in \reals$ such that $\lambda_{2} = t \lambda_{1}$, so we may pick $\lambda$ and
$\rho$  
such that $f$ and $g$ are two future directed light like vector fields of $\lorentz^{4}$. Now, from
$$\frac{\partial}{\partial w} \ln(\lambda \rho (1 - a \overline{b})(1 - b \overline{a})) = 
\frac{\partial}{\partial w} \ln \lambda + \frac{\partial}{\partial w} \ln \rho - 
\frac{b{\overline{a}}_{w}}{1 - b \overline{a}} - \frac{a{\overline{b}}_{w}}{1 - a \overline{b}} = 0,$$ 
then it follows that $\lambda \rho (1 - a \overline{b})(1 - b \overline{a})$ is constant function a positive real valued, as we wanted. 

Next we denote that constant function $\lambda \rho (1 - a \overline{b})(1 - b \overline{a})$ by $r$.
Then for that $r$ we define the conformal immersion in $\mathbb L^4$ 
$$h(w) = f(w) + g(w)$$  with conditions 
$$ \lpr{h(w)}{h(w)} = -4r \; \ \ \mbox{ and } \; \ \  h_{w} = \mu W(a,b).$$ 
Now from Theorem \ref{16}, it follows that $\eta(w) = k \xi(w)$ where $k \in \reals$ is a nonzero real constant.
Then for that $k$ we define a vector field orthogonal to $[h_{w}]$, namely,  
$$n(w) = f(w) - k g(w).$$
One notes that 
$$ n_{w} = (\eta - k \xi) W(a,b) = 0, \; \ \ \ \ \forall w \in M.$$
Therefore it follows that $n(w)$ is a constant vector field orthogonal to the surface $S = h(M)$.  From Proposition \ref{0.1} it follows that for \ $\vec v = \frac{n(w)}{| n(w)|}$ \ the M\"obius  transformation $M_{\vec{v}}$ \ is such that 
$a(w) = M_{\vec{v}}(b(w))$. Finally one also notes  that,   
if $k > 0$ (resp. $k < 0$) then $\vec{v}$ is a spacelike vector (resp. timelike vector) in $\lorentz^{4}$. 
\end{proof}

\begin{theor}\label{1101}
Let $(M,F)$ be a conformal totally umbilical immersion from $M$ into $\mathbb L^4$ such that $a(F)$ and $b(F)$ are non constants 
anti-holomorphic maps. Then 
\begin{enumerate}
\item  There exists a conformal immersion $(M,f)$ from $M$ into the light cone $\C$, such that 
$$a(F) = a(f), \; \; \; b(F) = b(f), \ \ \mu(F) =  \mu(f).$$

\item There exists an affine hyperplane $H(w_{0},\vec{v}) = F(w_{0}) + [\vec{v}]^{\perp}$ such that $F(M) \subset H(w_{0},\vec{v})$. Moreover, 
\end{enumerate}

{\rm (2.1)} If $\lpr{\vec{v}}{\vec{v}} = -1$ then $F(M)$ is congruent to a spherical surface $S^{2}(1/r)$ of the

\hspace{0.8cm} Euclidean space $\mathbb R^{3}$. 

\vspace{0.2cm}

{\rm (2.2)} If $\lpr{\vec{v}}{\vec{v}} = 1$ then $F(M)$ is congruent to a hyperbolic surface $S^{2}(-1/r)$ of the 

\hspace{0.8cm} Lorentzian space $\mathbb R_{1}^{3}$. 
\end{theor}

\begin{proof}
(1) From hypothesis we have that $a(F)$ and $b(F)$ satisfy equations (\ref{1}) and (\ref{1.1}).  Now it takes the real solution of Equation (\ref{17}) for $a(F)$ and $b(F)$, given by formula (\ref{21.1}) for $h = 1$, namely, 
$$\lambda = \frac{1}{\overline{a(F)}_{w} (1 - a(F)\overline{b(F)})}.$$ 
 Then  we construct the conformal immersion $g$ in the light cone $\mathcal C$ given by $g(w) = \lambda(w) L(a(F)(w))$ with 
$g_w = \mu(g)  W(a(F), b(F))$, for which from Lemma \ref{14}, one also knows that $\mu(g) = \overline{a(F)}_w \lambda + \overline{ a(F)} \lambda_w$.  Now since $\mu(F)$ and $\mu(g)$ are solutions of Equation (\ref{2}), it follows from Theorem \ref{16} item 2, that there exists a constant real number $c$ such that $\frac{\mu(F)}{\mu(g)} = c$.

 Next we observe that we can choose implicitly $c=1$, since we can construct a new  conformal immersion $f$ from $M$ into the light cone $\C$, such that 
$a(F) = a(f), \  b(F) = b(f)$ and $\mu(F) =  \mu(f).$ In fact we take $f$ such that $\mu(f) = c \mu(g)$, so $\mu(f) = \overline{a(F)}_w (c \lambda) + \overline{ a(F)} (c \lambda)_w$, which implies that we are taking the solution $c\lambda$ of Equation (\ref{17}). Then $f$ has the expression $f(w) = c\lambda(w) W(a(F), b(F))$.

\vspace{0.2cm}
  
  (2)  It follows from Lemma \ref{19} item 4, that there exists $t \in \mathbb R$ such that $$F(w) = f(w) + t g(w),$$ for $f$ and $g$ conformal immersions in the light cone $\mathcal C$, given by $f(w) = \lambda(w) L(a(F)(w))$,
 $g(w) = \rho(w) L_0 (b(F) (w))$, where $\lambda$, $\rho$ are real valued functions, with $f_w = \mu(f) W(a(F),b(F))$, \ $g_w = \mu(g) W(a(F),b(F))$. Moreover, one has that equations (\ref{17}) and (\ref{17.1}) are satisfied.   
 Then we use the same argument used in the proof of Theorem \ref{29},  to get the vector field 
 $$
n(w) = f(w) - tk g(w)$$
where $k \in \mathbb R - \{0\}$  is such that $\mu(f) = t k \mu(g)$. Then it follows that $\vec{v} = \frac{n(w)}{\vert n(w)\vert}$
is a unit constant vector field orthogonal to the surface $F(w)$. Hence by  Proposition \ref{0.1}  we have that $F(M) \subset H(w_{0},\vec{v})$, where  $H(w_{0},\vec{v}) = F(w_{0}) + [\vec{v}]^{\perp}$ is 
the affine hyperplane associated to $\vec v$.

 Moreover,  if $\left< \vec v,  \vec v \right> = - 1 $ then $F(M)$ is contained in a spacelike affine hyperplane. Since $F$ is totally umbilical in $\mathbb L^4$, it follows that  $F(M)$ is congruent to a spherical surface $S^{2}(1/r)$ of the Euclidean space $\mathbb R^{3}$. But if $\left< \vec v,  \vec v \right> =1 $ it follows that $F(M)$ is contained in a timelike affine hyperplane and hence  $F(M)$ is congruent to an hyperbolic surface $S^{2}(-1/r)$ of the Lorentzian space $\mathbb R_{1}^{3}$.  

  \end{proof}

\section{When the Mean Curvature Vector $H(f)$ is a Lightlike Vector Field}

This section is dedicated to studying equations which occur when we assume the geometric condition of mean curvature vector $H(f)$ to be  lightlike vector field and its relation to conformal immersions into the 
hyperbolic space $\mathbb H^3$. Here we will finish solving the initial question about under which conditions there exists conformal immersion $(M,f)$  in $\mathbb L^4$ such that $f_w = \mu W(a,b)$, now assuming $a\in \mathcal H(M)$ but $b$ is neither holomorphic or anti-holomorphic function.

\vspace{0.2cm}
We start by noting that  the mean curvature vector $H(f)$ is given by $H(f) = \frac{2}{g_{11}(f)} f_{w \overline{w}}$.  Then
 from Theorem \ref{1.11} it follows that $H(f)$  is a 
lightlike vector field in $\lorentz^{4}$, if, and only if, either $a(w)$ is a holomorphic map 
and $b(w)$ is not a holomorphic map, or,  $a(w)$ is not a holomorphic map and $b(w)$ is holomorphic. 
Moreover we see that $H(f) = 0$, if, and only if $f_{w \overline{w}} = 0$, and this last equation holds if, and only if, 
$a, b \in {\mathcal{H}}(M)$. 

\vspace{0.3cm}

Our fist result is consequence of  Lemma \ref{14}.

\begin{prop}\label{32}
Let $(M,f)$ be a conformal immersion from $M$ into the light cone $\C$, such that $f(w) = \lambda(w) L(x(w))$ with 
$x \in {\mathcal{H}}(M)$.  Then the mean curvature vector $H(f)$ is a lightlike vector field of $\mathbb L^4$, if and only if, $a(f) \in {\mathcal{H}}(M)$.

 Moreover  on that condition, we have that
 
 \begin{enumerate}
 \item  \ $\frac{\partial}{\partial w} \ln \lambda = \frac{x_{w}}{a - x} \; \; \mbox{ therefore } \; \; 
\lambda(w) = \lambda_{0} e^{2 \Re \int_{w_{0}}^{w} \frac{x_{\xi}}{a(\xi) - x(\xi)} d \xi}.$
\item  \ $f_{w \overline{w}} = \lambda_{w \overline{w}} L \left(x + \frac{\lambda x_{w}}{\lambda_{w}}\right) \; \; \mbox{ and } \; \;
(\ln \lambda)_{w \overline{w}} = \frac{\lambda \lambda_{w \overline{w}} - \lambda_{w} \lambda_{\overline{w}}}{\lambda^{2}} = 0.$

\item \ $g_{11}(f) = 4 \lambda^{2} \vert x_{w} \vert^{2} \; \; \mbox{ thus } \; \; 
H(f) = \frac{\lambda_{w \overline{w}}}{ 2 \lambda^{2} \vert x_{w} \vert^{2}} 
L\left(x + \frac{\lambda x_{w}}{\lambda_{w}}\right)$ \\

\item \ $ K(f) = 0,$  where $K(f)$ is the Gaussian curvature of surface $S = f(M)$. 
\end{enumerate}
\end{prop}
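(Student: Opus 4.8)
The plan is to read everything off from the two descriptions already in hand: the parametrization of $a(f),b(f),\mu(f)$ from Lemma \ref{14}(1), and the decomposition of $f_{w\overline{w}}$ along the null frame $L_{0}(b),L_{3}(a)$ from Theorem \ref{1.11}. Since $x\in\mathcal{H}(M)$, Lemma \ref{14}(1) gives $b(f)=1/\overline{x}$, which is anti-holomorphic and non-constant (because $|x_{w}|\neq 0$), so $b_{\overline{w}}\neq 0$, and $\mu(f)=\overline{x}\lambda_{w}\neq 0$ using $x\neq 0$ and $\lambda_{w}\neq 0$. By Theorem \ref{1.11},
$$f_{w\overline{w}}=\frac{\mu a_{\overline{w}}}{1-a\overline{b}}L_{0}(b)+\frac{\mu b_{\overline{w}}}{1-b\overline{a}}L_{3}(a),$$
and because $\lpr{L_{0}(b)}{L_{3}(a)}=-2(1-a\overline{b})(1-b\overline{a})\neq 0$, a combination $\alpha L_{0}(b)+\beta L_{3}(a)$ is null exactly when $\alpha\beta=0$. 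Here the coefficient of $L_{3}(a)$ never vanishes, so $H(f)=\frac{2}{g_{11}}f_{w\overline{w}}$ is lightlike (and nonzero) if and only if the coefficient of $L_{0}(b)$ vanishes, i.e.\ $a_{\overline{w}}=0$, i.e.\ $a(f)\in\mathcal{H}(M)$. This settles the equivalence.

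For the quantitative part I work where $a(f)$ is holomorphic. Item (1) is immediate from $a(f)=x+\lambda x_{w}/\lambda_{w}$: subtracting $x$ and dividing by $x_{w}\neq 0$ gives $x_{w}/(a-x)=\lambda_{w}/\lambda=(\ln\lambda)_{w}$, and since this $w$-derivative is holomorphic while $\lambda$ is real and positive, Proposition \ref{119} reconstructs $\lambda$ as $\lambda_{0}\exp\!\bigl(2\Re\int_{w_{0}}^{w}\tfrac{x_{\xi}}{a-x}\,d\xi\bigr)$ with $\lambda_{0}=\lambda(w_{0})$. Because $(\ln\lambda)_{w}=x_{w}/(a-x)$ is holomorphic, differentiating once more in $\overline{w}$ yields the identity $(\ln\lambda)_{w\overline{w}}=0$, equivalently $\lambda\lambda_{w\overline{w}}=\lambda_{w}\lambda_{\overline{w}}$; this is half of item (2) and will be the organizing principle for the rest.

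Item (2) then follows by dropping the first term of the display above ($a_{\overline{w}}=0$) and substituting $\mu=\overline{x}\lambda_{w}$, $b_{\overline{w}}=-\overline{x_{w}}/\overline{x}^{2}$ and $1-b\overline{a}=-\lambda\overline{x_{w}}/(\overline{x}\lambda_{\overline{w}})$; the scalar collapses to $\lambda_{w}\lambda_{\overline{w}}/\lambda$, which equals $\lambda_{w\overline{w}}$ by the identity just proved, and $L_{3}(a)=L(x+\lambda x_{w}/\lambda_{w})$, giving the stated form. For item (3) I use the first fundamental form $ds^{2}(f)=4\mu\overline{\mu}(1-a\overline{b})(1-b\overline{a})\,dw\,d\overline{w}$, so that $g_{11}=4\mu\overline{\mu}(1-a\overline{b})(1-b\overline{a})$; the same substitutions give $\mu\overline{\mu}=|x|^{2}|\lambda_{w}|^{2}$ and $(1-a\overline{b})(1-b\overline{a})=\lambda^{2}|x_{w}|^{2}/(|x|^{2}|\lambda_{w}|^{2})$, whose product is $\lambda^{2}|x_{w}|^{2}$, hence $g_{11}=4\lambda^{2}|x_{w}|^{2}$ and $H(f)=\frac{2}{g_{11}}f_{w\overline{w}}$ is as claimed. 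Finally, for item (4) I write the induced metric conformally as $ds^{2}=r^{2}\,dw\,d\overline{w}$ with $r^{2}=g_{11}$, so $K=-\frac{4}{r^{2}}(\ln r)_{w\overline{w}}$; since $\ln r=\mathrm{const}+\ln\lambda+\tfrac12\ln x_{w}+\tfrac12\ln\overline{x_{w}}$, and each summand is annihilated by $\partial_{w}\partial_{\overline{w}}$ (by $(\ln\lambda)_{w\overline{w}}=0$ and the holomorphicity of $x_{w}$), we conclude $K=0$.

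The conceptual content is light: the equivalence is a one-line consequence of Theorem \ref{1.11} once one notices the $L_{3}(a)$-coefficient cannot vanish. The place to be careful, and the only real work, is the bookkeeping that turns the abstract coefficient $\frac{\mu b_{\overline{w}}}{1-b\overline{a}}$ into $\lambda_{w\overline{w}}$ and simplifies $g_{11}$ and $\ln r$; every clean cancellation there rests on the identity $(\ln\lambda)_{w\overline{w}}=0$, so I would establish that at the outset and propagate it through items (2)--(4).
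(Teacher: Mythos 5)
Your proposal is correct and follows essentially the same route as the paper: it reads $a(f)=x+\lambda x_w/\lambda_w$, $b(f)=1/\overline{x}$, $\mu(f)=\overline{x}\lambda_w$ off Lemma \ref{14}(1), gets the lightlike characterization from the $L_{0}(b)$, $L_{3}(a)$ decomposition of $f_{w\overline{w}}$ in Theorem \ref{1.11}, and then derives $(\ln\lambda)_w=x_w/(a-x)$, hence $(\ln\lambda)_{w\overline{w}}=0$, to obtain items (1)--(4). The only difference is cosmetic: you spell out explicitly (via the nonvanishing of the $L_{3}(a)$-coefficient) the equivalence that the paper simply quotes from its remark at the start of Section 5, and you carry out the coefficient and curvature computations in slightly more detail.
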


\begin{proof}
From  Lemma \ref{14} item (1), one has that $b(f) = \frac{1}{\bar x}$ is anti-holomorphic map, and so $H(f)$ is a lightlike vector field of $\mathbb L^4$, if and only if, $a(f) \in {\mathcal{H}}(M)$.
\vspace{0.1cm}

Now, since that $a(f) \in {\mathcal{H}}(M)$, again  from Lemma \ref{14} item (1),  it follows that $a(f) =  x + \frac{\lambda x_w}{\lambda_w}$, which implies immediately  that $\frac{\partial}{\partial w} \ln \lambda = \frac{x_{w}}{a - x} $.  By integrating one has the expression for $\lambda$ and so we have proved item (1).

\vspace{0.2cm}

For (2), we that
$$
f_{w \overline w} = \frac{\mu(f) b_{\overline w}}{1 - \overline a b} L(a)=  \frac{\lambda_w}{\lambda} \lambda_{\overline w} L(x + \frac{\lambda x_w}{\lambda _w}) = \frac{\lambda_{\overline w} x_w}{a - x} L(x + \frac{\lambda x_w}{\lambda _w}) = \lambda_{w \bar w} L(x + \frac{\lambda x_w}{\lambda _w}).$$
The second statement is obvious.

\vspace{0.2cm}
(3) \  Since $ds^2(f) = 4 \mu \overline \mu |1 - a(f) \bar b(f)| dw d\overline w$, \  $\mu = \bar x \lambda_w$ and $\lambda$ is real valued function, we have that $ds^2(f) = 4 \lambda^2 |x_w|^2 $. So, $g_{11}(f) = 4 \lambda^2 |x_w|^2$. The other statement is immediately.

\vspace{0.2cm}

(4) \ Since $K(f) = - \frac{\Delta \ln (g_{11}(f))}{g_{11}(f)}$, $x \in \mathcal H(M)$ and $(\ln \lambda)_{w \bar w}=0$, we get that $K(f) =0$.
\end{proof}

\begin{prop}\label{392}
Let $(M,f)$ and $(M,g)$  two conformal immersions  from $M$ into the light cone $\C$, given by
$$f(w) = \lambda(w) L(x(w)) \; \; \mbox{ and } \; \; g(w) = \rho(w) L(y(w))$$
where $x,  y  \in \mathcal{H}(M)$. \   If the immersions satisfy that 
\begin{equation}\label{30}
a(f) = a(g) \in \mathcal{H}(M) \ \ \ {and} \ \ \  \lambda \rho (x - y)(\overline{x} - \overline{y}) = 1,
\end{equation}
then $h(w) = \frac{1}{2} (f(w) + g(w))$ defines a conformal immersion from $M$ into the hyperbolic space $\hyper$
with lightlike mean curvature vector.
\end{prop}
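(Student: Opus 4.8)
Let $(M,f)$ and $(M,g)$ be two conformal immersions from $M$ into the light cone $\C$, given by $f(w)=\lambda(w)L(x(w))$ and $g(w)=\rho(w)L(y(w))$ where $x,y\in\mathcal H(M)$. If the immersions satisfy $a(f)=a(g)\in\mathcal H(M)$ and $\lambda\rho(x-y)(\overline x-\overline y)=1$, then $h(w)=\frac12(f(w)+g(w))$ defines a conformal immersion from $M$ into the hyperbolic space $\hyper$ with lightlike mean curvature vector.

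=== PROOF PROPOSAL ===

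The plan is to verify three things in turn: that $h$ lands in $\hyper$, that it is a conformal immersion, and that its mean curvature vector is lightlike. Throughout I will use the two normalizations in the hypothesis $(\ref{30})$ and the structural formulas for immersions into $\C$ supplied by Lemma \ref{14} and Proposition \ref{32}.

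First I would show $\lpr{h}{h}=-1$, so that $h(M)\subset\hyper$. Writing $L(x)=\nlight{x}$, a direct computation of the Lorentz inner product of two light-cone vectors gives $\lpr{L(x)}{L(y)}=-2(x-y)(\overline x-\overline y)$, hence $\lpr{f}{g}=-2\lambda\rho(x-y)(\overline x-\overline y)=-2$ by the second condition in $(\ref{30})$; since $\lpr{f}{f}=\lpr{g}{g}=0$ on the light cone, this yields $\lpr{h}{h}=\tfrac14(\lpr{f}{f}+2\lpr{f}{g}+\lpr{g}{g})=\tfrac14(-4)=-1$. Because $f,g$ are future-directed lightlike, $h$ is future-directed timelike of unit length, so it parametrizes the correct sheet of $\hyper$.

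Next I would establish that $h$ is a conformal immersion. Since $h_w=\tfrac12(f_w+g_w)$ and both $f_w=\mu(f)W(a(f),b(f))$, $g_w=\mu(g)W(a(g),b(g))$ share the same first slot $a(f)=a(g)=:a\in\mathcal H(M)$, I would compute $b(f)=1/\overline x$ and $b(g)=1/\overline y$ from Lemma \ref{14}(1). The key is that $h_w$, being a combination of $W$'s with a common first argument, is again proportional to some $W(a,b(h))$; conformality then reduces to checking $\lpr{h_w}{h_w}=0$ and $\lpr{h_w}{\overline{h_w}}>0$. I expect $\lpr{h_w}{h_w}=0$ to follow because $h_w$ lies in the complexified tangent plane spanned by $W(a,\cdot)$, which is isotropic; the positivity of $\lpr{h_w}{\overline{h_w}}$ is where the normalizations keep the expression from degenerating, and I would use $a\in\mathcal H(M)$ together with $\lambda,\rho$ real to confirm nondegeneracy.

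Finally, for the lightlike mean curvature I would use $H(h)=\frac{2}{g_{11}(h)}h_{w\overline w}$ and decompose $h_{w\overline w}=\tfrac12(f_{w\overline w}+g_{w\overline w})$. By Proposition \ref{32}(2), each piece is lightlike: $f_{w\overline w}=\lambda_{w\overline w}L(x+\lambda x_w/\lambda_w)=\lambda_{w\overline w}L(a)$ and likewise $g_{w\overline w}=\rho_{w\overline w}L(a)$, where crucially both point along the \emph{same} lightlike direction $L(a)$ since $a(f)=a(g)=a$. Therefore $h_{w\overline w}=\tfrac12(\lambda_{w\overline w}+\rho_{w\overline w})L(a)$ is a (real) multiple of the single lightlike vector $L(a)$, so $H(h)$ is lightlike. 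The main obstacle I anticipate is the conformality step: confirming that $h_w$ collapses to the form $\mu(h)W(a,b(h))$ and that the conformal factor $\lpr{h_w}{\overline{h_w}}$ stays strictly positive, which is exactly where both hypotheses in $(\ref{30})$—the shared $a$ and the product normalization—must be used simultaneously, rather than the comparatively mechanical inner-product evaluations in the other two steps.
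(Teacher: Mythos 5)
Your steps for $\lpr{h}{h}=-1$ and for the lightlike mean curvature are correct and are exactly the paper's computations: $\lpr{L(x)}{L(y)}=-2(x-y)(\overline{x}-\overline{y})$ together with the normalization gives $\lpr{f}{g}=-2$, and $f_{w\overline{w}}=\lambda_{w\overline{w}}L(a)$, $g_{w\overline{w}}=\rho_{w\overline{w}}L(a)$ (Proposition \ref{32}) make $h_{w\overline{w}}$ a real multiple of the single lightlike direction $L(a)$. The isotropy argument for $\lpr{h_w}{h_w}=0$ is also fine: since $a(f)=a(g)$, one has $\lpr{W(a,b(f))}{W(a,b(g))}=0$, which is precisely the paper's $\lpr{h_w}{h_w}=\tfrac12\lpr{f_w}{g_w}=0$.

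The genuine gap is the step you yourself flag as the main obstacle: the strict positivity of $\lpr{h_w}{h_{\overline{w}}}$. Saying you ``expect'' to confirm nondegeneracy from $a\in\mathcal{H}(M)$ and $\lambda,\rho$ real is not an argument, and a direct expansion does not settle it cheaply: $\lpr{h_w}{h_{\overline{w}}}=\tfrac14\bigl(\lpr{f_w}{f_{\overline{w}}}+\lpr{g_w}{g_{\overline{w}}}+2\Re\lpr{f_w}{g_{\overline{w}}}\bigr)$, and the cross term has no obvious sign, so positivity does not follow from the two conformal factors of $f$ and $g$ alone. The paper closes this with a short Lorentzian argument that you could have assembled from the material you already use in your last step: from $\lpr{h}{h}=-1$ one gets $\lpr{h_w}{h}=0$, hence $\lpr{h_w}{h_{\overline{w}}}=-\lpr{h_{w\overline{w}}}{h}$; by Proposition \ref{32}(2), $\lambda_{w\overline{w}}=\vert\lambda_w\vert^{2}/\lambda>0$ and $\rho_{w\overline{w}}=\vert\rho_w\vert^{2}/\rho>0$, so $h_{w\overline{w}}=\tfrac12(\lambda_{w\overline{w}}+\rho_{w\overline{w}})L(a)$ is a nonzero future-directed lightlike vector, and pairing it with the timelike vector $h$ gives $\lpr{h_{w\overline{w}}}{h}<0$, whence $\lpr{h_w}{h_{\overline{w}}}>0$. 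Without this (or an equivalent computation of the cross term), your proposal does not yet prove that $h$ is a conformal immersion; with it, your argument coincides with the paper's.
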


\begin{proof}
In fact using Equation (\ref{30}) one has that $\left<L(x) , L(y) \right>= - \frac{2}{\lambda \rho}$, which implies that
$\left<h(w), h(w)\right> = -1$.
Now using that $a(f) = a(g)$ and the expressions $$f_w = \bar x \lambda_w W(a(f), b(f)), \; \; \; \; \ g_w = \bar y \rho_w W(a(g), b(g))$$ we have that
$\left<h_w, h_w\right> = \frac{1}{2} \left<f_w , g_w \right> = 0.$ 

Now, since $\left<h_w, h\right> =0$ we have that $\left<h_w, h_{\bar w}\right> = - \left<h_{w \bar w}, h \right> \ne 0$.  Since $h_{w \bar w}$ and $h$ are, respectively, lightlike and timelike vectors, it follows that $ \left<h_{w \bar w}, h \right> < 0$, and hence $\left<h_w, h_{\bar w}\right>>0$. 

Moreover, using the expression for $f_{w \bar w}$ and  $g_{w \bar w}$ given in Proposition \ref{32}, we have that  
$$
\left<H(h), H(h)\right> = \frac{1}{g_{11}^2(h)} (\lambda^2_{w \bar w} + 2 \lambda_{w \bar w} \rho_{w \bar w} + \rho^2_{w \bar w}) \left<L(a), L(a)\right> = 0.
$$
\end{proof}

\subsection{The Linking Equation for Surfaces in $\mathbb H^3$}

 In this subsection we establish a interesting partial equation relating the maps $a(w),  b(w)$ of a conformal immersion from $M$ into the hyperbolic space. We will see in next section that after a change variable that equation will become into the partial Riccati's equation with holomorphic parameters. 

\vspace{0.2cm}

Let $h : M \to \lorentz^{4}$ be a conformal immersion from $M$ into $\hyper$, thus $\lpr{h}{h} = -1$.
From these conditions it follows that
$$ h_{w} = \mu W(a,b) \; \; \mbox{ where } \; \; W(a,b) = \weis{a}{b}$$
$$L_{3}(a) = \nlight{a}$$
$$L_{0}(b) = \slight{b}.$$

Since $\lpr{h}{h} = -1$, one has that $\lpr{h_{w}}{h} = 0$ which implies that $$h(w) = \alpha(w) L_{3}(a(w)) + \beta(w) L_{0}(b(w)).$$
Since  
$\lpr{h_{w}}{L_{0}} = 0 = \lpr{h_{w}}{L_{3}} $ it follows  that $ \frac{\alpha_{w}}{\alpha} = - \frac{\lpr{{L_{3}}_{w}}{L_{0}}}{\lpr{L_{3}}{L_{0}}}$ \ and \ 
$\frac{\beta_{w}}{\beta} = - \frac{\lpr{{L_{0}}_{w}}{L_{3}}}{\lpr{L_{3}}{L_{0}}}.$ Now after some computations we get that 
\begin{equation}
\frac{\alpha_{w}}{\alpha} = \frac{a_{w} \overline{b}}{1 - a \overline{b}} + 
\frac{{\overline{a}}_{w} b}{1 - \overline{a} b}, \   \; \; \; \; \ \; \; \; 
\frac{\beta_{w}}{\beta} = \frac{b_{w} \overline{a}}{1 - b \overline{a}} + 
\frac{{\overline{b}}_{w} a}{1 - \overline{b} a} 
\end{equation}
Since $\alpha$ and $\beta$ are real valued smooth maps, we have that 
$\overline{\frac{\alpha_{w}}{\alpha}} = \frac{\alpha_{\overline{w}}}{\alpha}$ and 
$\overline{\frac{\beta_{w}}{\beta}} = \frac{\beta_{\overline{w}}}{\beta}$, hence 
\begin{equation}\label{35}
\frac{\alpha_{\overline{w}}}{\alpha} = \frac{{\overline{a}}_{\overline{w}} b}{1 - b \overline{a}} + 
\frac{{\overline{b} {a}}_{\overline{w}}}{1 - a \overline{b}}, \ \; \; \; \ \; \; \;
\frac{\beta_{\overline{w}}}{\beta} = \frac{{\overline{b}}_{\overline{w}} a}{1 - a \overline{b}} + 
\frac{{b}_{\overline{w}} \overline{a}}{1 - \overline{a} b}.
\end{equation}
Now, since $\left<h_w, h\right> = 0$ and $W(a, b)$ is a lightlike vector, it follows that
$ \lpr{h_{ww}}{h} = - \lpr{h_{w}}{h_{w}} = 0$, and hence we get that  
\begin{equation}\label{34} 
\alpha a_{w} (1 - b \overline{a}) = - \beta b_{w} 
(1 - a \overline{b}).
\end{equation}

\vspace{0.2cm}

It notes that, from Equation (\ref{34}),  it follows that $a \in \overline{\mathcal H}(M)$ if, and only if, 
$b  \in \overline{\mathcal H}(M)$. So, under such condition we would have a totally umbilical surface $(M,h)$.

\vspace{0.2cm}
Hence next we assume that $a_{w} \neq 0 \neq b_{w}$. Then, taking the logarithmic derivative relative to $\overline{w}$ on Equation (\ref{34}) we have:
\begin{equation}\label{40}
\frac{\alpha_{\overline{w}}}{\alpha} + \frac{a_{w{\overline{w}}}}{a_{w}} - 
\frac{b {\overline{a}}_{\overline{w}}}{1 - b \overline{a}} - 
\frac{\overline{a} b_{\overline{w}}}{1 - b \overline{a}} = 
\frac{\beta_{\overline{w}}}{\beta} + \frac{b_{w{\overline{w}}}}{b_{w}} - 
\frac{a {\overline{b}}_{\overline{w}}}{1 - a \overline{b}} - 
\frac{\overline{b} a_{\overline{w}}}{1 - a \overline{b}}.
\end{equation}

\vspace{0.3cm}
\begin{theor} Let $h : M \to \lorentz^{4}$ be a conformal immersion from $M$ into $\hyper$ with $h_w = \mu W(a, b)$,  $a_{w} \neq 0 \neq b_{w}$ and $a\ne b$. Then
\begin{enumerate}
\item The functions $a$ and $b$ are linked by the equation   
\begin{equation}\label{36.1}
\frac{a_{w{\overline{w}}}}{a_{w}} + \frac{2 \overline{b}}{1 - a \overline{b}} a_{\overline{w}} = 
\frac{b_{w{\overline{w}}}}{b_{w}} + \frac{2 \overline{a}}{1 - b \overline{a}} b_{\overline{w}}.
\end{equation}

\vspace{0.1cm}

\item  If $a \in \mathcal{H}(M)$ then the linking equation is 
\begin{equation}\label{39.1}
b_{w{\overline{w}}} + \frac{2 \overline{a}}{1 - b \overline{a}} b_{w} b_{\overline{w}} = 0 \; \; 
\mbox{ with the condition } \; \; b_{w} \; b_{\overline{w}} \neq 0.
\end{equation}

\vspace{0.1cm}

\item  Taking $\varphi = \frac{1}{\overline{b}}$, Equation (\ref{39.1}) becomes the partial Riccati's equation with holomorphic parameters, namely, 
\begin{equation}\label{111}
\frac{\varphi_{w}}{(\varphi - a)^{2}} = P, \; \; \mbox{ where } \; \; a,P \in \mathcal{H}(M).
\end{equation}
\end{enumerate}
Moreover, the smooth map $\varphi$ from $M$ into $\complex$ is given by the conformal 
immersion $h:M \to \hyper$, and $\varphi$ is neither a holomorphic map nor an anti-holomorphic map.  
\end{theor}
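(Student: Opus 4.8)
The plan is to leverage the fact that the hard computational ingredients—equations (\ref{35}), (\ref{34}), and the logarithmic identity (\ref{40})—are already in hand, so that item (1) reduces to substitution and cancellation. First I would insert the two formulas of (\ref{35}) for $\alpha_{\overline{w}}/\alpha$ and $\beta_{\overline{w}}/\beta$ into the linking identity (\ref{40}). On the left-hand side the contribution $\overline{a}_{\overline{w}}b/(1-b\overline{a})$ produced by $\alpha_{\overline{w}}/\alpha$ cancels against the term $-b\overline{a}_{\overline{w}}/(1-b\overline{a})$ already present, and symmetrically on the right the term $a\overline{b}_{\overline{w}}/(1-a\overline{b})$ cancels against the contribution of $\beta_{\overline{w}}/\beta$. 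Collecting the two surviving $a_{\overline{w}}$ terms and the two surviving $b_{\overline{w}}$ terms, and using $1-\overline{a}b=1-b\overline{a}$, then yields (\ref{36.1}) directly.

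For item (2) I would simply impose $a\in\mathcal{H}(M)$, so that $a_{\overline{w}}=0$ and hence $a_{w\overline{w}}=(a_w)_{\overline{w}}=0$; the whole left-hand side of (\ref{36.1}) vanishes, and multiplying the remaining identity by $b_w$ gives (\ref{39.1}). The nondegeneracy $b_w\,b_{\overline{w}}\neq 0$ then follows: $b_w\neq 0$ is a standing hypothesis, and since for an immersion into $\hyper$ one has $\lpr{h_{w\overline{w}}}{h}=-\lpr{h_w}{h_{\overline{w}}}\neq 0$, the functions $a$ and $b$ cannot both be holomorphic, so with $a$ holomorphic we must have $b_{\overline{w}}\neq 0$.

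The substantive step is item (3). Here I would first conjugate (\ref{39.1}), using $\overline{g_w}=\overline{g}_{\overline{w}}$, $\overline{g_{\overline{w}}}=\overline{g}_w$ and commutativity of the mixed derivative, to rewrite it purely in $\overline{b}$:
\[
\overline{b}_{w\overline{w}}+\frac{2a}{1-a\overline{b}}\,\overline{b}_w\,\overline{b}_{\overline{w}}=0.
\]
Then I would substitute $\overline{b}=1/\varphi$, compute $\overline{b}_w=-\varphi^{-2}\varphi_w$, $\overline{b}_{\overline{w}}=-\varphi^{-2}\varphi_{\overline{w}}$ and $\overline{b}_{w\overline{w}}=2\varphi^{-3}\varphi_w\varphi_{\overline{w}}-\varphi^{-2}\varphi_{w\overline{w}}$, and clear the factor $1-a/\varphi=(\varphi-a)/\varphi$. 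After multiplying through by $-\varphi^2$ and combining the two $\varphi_w\varphi_{\overline{w}}$ terms via $2+2a/(\varphi-a)=2\varphi/(\varphi-a)$, the equation collapses to $\varphi_{w\overline{w}}=2\varphi_w\varphi_{\overline{w}}/(\varphi-a)$.

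Finally I would set $P:=\varphi_w/(\varphi-a)^2$ and check it is holomorphic: differentiating in $\overline{w}$ and using $a_{\overline{w}}=0$ gives $P_{\overline{w}}=[\varphi_{w\overline{w}}(\varphi-a)-2\varphi_w\varphi_{\overline{w}}]/(\varphi-a)^3$, whose numerator vanishes exactly by the equation just derived, so $P\in\mathcal{H}(M)$ and (\ref{111}) holds. That $\varphi$ comes from $h$ is immediate from $\varphi=1/\overline{b}$ and the definition (\ref{1'}) of $b$; that $\varphi$ is neither holomorphic nor anti-holomorphic follows since $\varphi_w=-\overline{b}^{-2}\,\overline{b_{\overline{w}}}\neq 0$ and $\varphi_{\overline{w}}=-\overline{b}^{-2}\,\overline{b_w}\neq 0$, using $b_{\overline{w}}\neq 0\neq b_w$. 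I expect the main obstacle to be the bookkeeping in the change of variables $\varphi=1/\overline{b}$—in particular tracking the conjugation rules and the factor $(\varphi-a)/\varphi$ correctly—rather than any conceptual difficulty.
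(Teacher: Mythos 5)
Your proposal is correct and follows essentially the same route as the paper: substituting (\ref{35}) into (\ref{40}) and cancelling to get (\ref{36.1}), specializing to $a\in\mathcal{H}(M)$ for (\ref{39.1}), and performing the change of variable $\varphi=1/\overline{b}$ to reach (\ref{111}), where your explicit $P=\varphi_w/(\varphi-a)^2$ agrees with the paper's $P=-\overline{b}_w/(1-a\overline{b})^2$. The only difference is that you spell out details the paper leaves implicit (the argument that $b_{\overline{w}}\neq 0$ and the verification that $P_{\overline{w}}=0$), and these added details are correct.
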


\begin{proof}
(1) \ Equation (\ref{36.1}) follows from substituting, in Equation (\ref{40}), the values obtained in Equation (\ref{35}).  

(2)  It is immediate.

(3)  Explicitly the map $P$ is given by $P= -\frac{{\overline b}_w}{(1 - a \overline b)^2}$, which by the hypotheses it is immediately holomorphic. The other statements are obvious.
\end{proof}

Next we construct examples of conformal surfaces $h(M) \subset \mathbb H^3$ which satisfy the linked equation  (\ref{36.1}).

\begin{example} Let $F : B_{1}(0) \to \hyper$ be the parametrization of the hyperbolic space $\hyper$ given by 
$$F(x,y,z) = \frac{1}{1 - x^{2} - y^{2} - z^{2}}(1 + x^{2} + y^{2} + z^{2}, 2x, 2y, 2z),$$ 
where   
$$B_{1}(0) =\{(x,y,z) \in \mathbb R^{3} : x^{2} + y^{2} + z^{2} < 1\}.$$
We have that 
$$\lpr{F_{x}}{F_{x}} = \lpr{F_{y}}{F_{y}} = \lpr{F_{z}}{F_{z}} = \frac{4}{(1 - x^{2} - y^{2} - z^{2})^{2}},$$ 
and $\lpr{F_{x}}{F_{y}} = \lpr{F_{x}}{F_{z}} = \lpr{F_{y}}{F_{z}} = 0$. \ Therefore, for each conformal immersion 
$$\beta(w) = (x(w),y(w),z(w)) \in B_{1}(0) \; \; \mbox{ for } \; \; w \in M,$$ 
the pair $(M,F \circ \beta)$ is an immersion from $M$ in $\hyper$. 

If $(M,\beta)$ is a part of a non flat minimal immersion contained in the ball $B_{1}(0)$, then $a(F \circ \beta)$ and 
$b(F \circ \beta)$ satisfy Equation (\ref{36.1}), and they are not anti-holomorphic maps. 

\end{example}

\section{Geometric solutions associated to the Partial Riccati's Equation with Holomorphic Parameters}

In this section we focus for obtaining geometric solutions of Equation (\ref{111}), that means solutions $\varphi$ which are related to conformal spacelike surfaces in
$\mathbb H^3$ with lightlike mean curvature vector.

We start with the following lemma which gives us technique for construing solutions of Equation (\ref{111}).

 \begin{lemma}\label{54}
Let $x$ and $y$ be two distinct holomorphic solutions of the Riccati equation 
$$\frac{\varphi_{w}}{(a - \varphi)^{2}} = P, \ \ \ {with} \ \ \ a, P \in \mathcal H(M).$$ 
Then, taking 
$$\lambda = \lambda_{0} e^{2 \Re \int_{w_{0}}^{w} P(\xi)(a(\xi) - x(\xi)) d \xi} \; \; \mbox{ and } \; \; 
\rho = \rho_{0} e^{2 \Re \int_{w_{0}}^{w} P(\xi)(a(\xi) - y(\xi)) d \xi},$$ 
it follows that equations (\ref{30}) are satisfied.
\end{lemma}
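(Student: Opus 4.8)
The plan is to verify directly the two conditions that make up (\ref{30}): the holomorphicity statement $a(f)=a(g)\in\mathcal{H}(M)$, and the normalization $\lambda\rho\,(x-y)(\overline{x}-\overline{y})=1$. The first is essentially bookkeeping with the formula from Lemma \ref{14}, while the second is where the Riccati equation is actually used, so I would treat them in that order.

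First I would establish $a(f)=a(g)=a$. Since $x\in\mathcal{H}(M)$ and $P,a\in\mathcal{H}(M)$, the integrand $\phi:=P(a-x)$ is holomorphic, so $\phi_{\overline{w}}=0$ is real; by Proposition \ref{119} the form $\phi\,dw$ has no real periods and the real function $\Psi:=2\Re\int_{w_0}^{w}\phi\,d\xi$ satisfies $\Psi_w=\phi$. Hence the prescribed $\lambda=\lambda_0 e^{\Psi}$ is real valued and obeys $\frac{\lambda_w}{\lambda}=P(a-x)$. By Lemma \ref{14} item (1), the light-cone immersion $f=\lambda L(x)$ has $a(f)=x+\frac{\lambda x_w}{\lambda_w}$; feeding in the Riccati relation $x_w=P(a-x)^2$ gives $\frac{\lambda x_w}{\lambda_w}=\frac{x_w}{P(a-x)}=a-x$, so $a(f)=a\in\mathcal{H}(M)$. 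The identical argument applied to $y$ and $\rho$ yields $a(g)=a$, whence $a(f)=a(g)$.

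For the second condition I would show that $\Lambda:=\lambda\rho\,(x-y)(\overline{x}-\overline{y})$ is a positive constant, and then fix $\lambda_0,\rho_0$ so that this constant equals $1$. First note that $x$ and $y$, as holomorphic functions of $w$, solve the first-order holomorphic ODE $\varphi_w=P(a-\varphi)^2$; by uniqueness, distinct solutions cannot agree at any point, so $x-y$ is nowhere zero and $\Lambda>0$ throughout $M$. Since $\Lambda$ is real valued, it suffices to prove $\partial_w\log\Lambda=0$, for then $\partial_{\overline{w}}\log\Lambda=\overline{\partial_w\log\Lambda}=0$ and $\Lambda$ is constant on the connected set $M$. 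The logarithmic derivative splits as $\tfrac{\lambda_w}{\lambda}+\tfrac{\rho_w}{\rho}+\tfrac{x_w-y_w}{x-y}+\tfrac{(\overline{x}-\overline{y})_w}{\overline{x}-\overline{y}}$. The first two terms equal $P(a-x)$ and $P(a-y)$; the last vanishes because $\overline{x},\overline{y}$ are anti-holomorphic, so $(\overline{x}-\overline{y})_w=0$; and using $x_w=P(a-x)^2$, $y_w=P(a-y)^2$ the third becomes $\frac{P[(a-x)^2-(a-y)^2]}{x-y}=P(x+y-2a)$. These four contributions sum to $P\big[(a-x)+(a-y)+(x+y-2a)\big]=0$.

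The main obstacle, and really the content of the lemma, is this last cancellation: one must express both $x_w$ and $y_w$ through the single holomorphic function $P$ via the Riccati equation and recognize the telescoping identity $(a-x)+(a-y)+(x+y-2a)=0$; the anti-holomorphicity of $\overline{x}-\overline{y}$ is what removes the only term that would otherwise survive. By contrast, the first condition is routine once Proposition \ref{119} guarantees that $\lambda$ (and $\rho$) can be taken real valued and Lemma \ref{14} supplies the formula for $a(f)$.
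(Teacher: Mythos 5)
Your proof is correct and follows essentially the same route as the paper: you compute $a(f)=x+\lambda x_w/\lambda_w=a$ (and likewise $a(g)=a$) by feeding the Riccati relation into the formula from Lemma \ref{14}, and you show $\lambda\rho\,(x-y)(\overline{x}-\overline{y})$ is a positive constant by the same logarithmic-derivative cancellation $P[(a-x)+(a-y)+(x+y-2a)]=0$ used in the paper. Your extra remarks (explicitly killing the anti-holomorphic term, noting $x-y$ never vanishes, and fixing $\lambda_0,\rho_0$ to make the constant equal $1$) only make explicit details the paper leaves implicit.
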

\begin{proof}
Taking the immersions $f(w) = \lambda(w) L(x(w))$ and $g(w) = \rho(w) L(y(w))$ and computing the functions $a(f)$ and $a(g)$ 
we have that 
$$a(f) = x + \frac{\lambda x_{w}}{\lambda_{w}} = x + \frac{P(a - x)^{2}}{P(a - x)} = x + (a - x) = a. \; \; 
$$
Analogously  $a(g) = a$, and so, we have proved first equation of  (\ref{30}). 
 
Now we need to show that $\lambda \rho (x - y)(\overline{x} - \overline{y}) = r$ for some real number $r > 0$. In fact, taking logarithm derivative and using that $x$ and $y$ are solutions of Riccati equation, we have that 
$$\frac{r_{w}}{r} = \frac{\lambda_{w}}{\lambda} + \frac{\rho_{w}}{\rho} + \frac{x_{w} - y_{w}}{x - y} = 
2aP - P(y + x) + P\frac{(x - a)^{2} -(y - a)^{2}}{x - y} = 0.$$ 
\end{proof}
\vspace{0.3cm}

Next we find explicit parametrization of spacelike surfaces which are associated to solutions of Equation  (\ref{111}).

\begin{example}\label{46.2}
It takes $a = 1/2 + \log w$ and $P = \frac{4}{w}$ in Equation  (\ref{111}). Then the functions $x = \log w$,  and, 
$y = 1 + \log w$ where $ w \in \complex \setminus \{ u + i v : u \leq 0, \ v = 0 \}$, are explicit solutions of (\ref{111}). According to Lemma \ref{54}, we can take, for instance,  $\lambda = (w{\overline{w}})^{2}$,  $\rho = 1 / \lambda$, and we conclude that equations (\ref{30}) are satisfied. Hence we find that the conformal spacelike surface in $\mathbb H^3$ with lightlike mean curvature vector, which is the associated solution to Equation  (\ref{111}), is given by $h(w) = \frac{1}{2}(f(w) + g(w))$ where $f(w) = \lambda
L(x)$ and $g(w) = \rho L(y)$. Then  $h(w)$ has explicitly the following components
$$
h(w) = \frac{1}{2 |w|^4} (2 + 2 \Re(log (w)) + |log(w)|^2(|w|^8 + 1) + |w|^8,  2 \Re(log (w)) (|w|^8 + 1)+2, 
$$
$$ 2 \Im (log (w)) (|w|^8 + 1), 2 \Re(log (w)) + |log(w)|^2 (|w|^8 + 1) - |w|^8 ),
$$ 
which corresponds to $$\varphi = \frac{(|w|^8 -1) log(w) -1}{|w|^8 -1}.$$
\end{example}

\begin{example}\label{46.3}
It takes $a = \frac{2 w}{3}$ and $P = \frac{3}{w^2}$ in Equation  (\ref{111}). Then the functions $x = \frac{w}{3}$  and 
$y = \frac{4w}{3}$ are explicit solutions of (\ref{111}). According to Lemma \ref{54}, we can take, for instance,  $\lambda = w{\overline{w}}$,  $\rho = \frac{1}{(w \overline w)^2}$, and we conclude that equations (\ref{30}) are satisfied. Hence we find that the conformal spacelike surface in $\mathbb H^3$ with lightlike mean curvature vector, which is the associated solution to Equation  (\ref{111}), is given by $h(w) = \frac{1}{2}(f(w) + g(w))$ where $f(w) = \lambda L(x)$ and $g(w) = \rho L(y)$. Then  $h(w)$ has explicitly the following components
$$
h(w) = \frac{1}{2 |w|^4} (1 + \frac{16}{9} |w|^2 + |w|^6 + \frac{|w|^8}{9},  \frac{2}{3} \Re(w) (|w|^6 + 4),  \frac{2}{3} \Im (w) (|w|^6 + 4),  
$$
$$- 1 + \frac{16}{9} |w|^2 - |w|^6 + \frac{|w|^8}{9} ),
$$ 
which corresponds to $$\varphi = \frac{w (|w|^6 - 8)}{3 (|w|^6 - 2)}.$$
\end{example}
\vspace{0.3cm}

The following example is a surface which belongs to the family of the Catenoid Cousins. We reference the papers \cite{GG} and \cite{F}, where are studied the cousins surfaces and in particular the Catenoid cousins. Similarly we invite the reader to see the second example of \cite{B} on page 341 for details. 

\begin{example}\label{222}
We start taking 
$$x = - y = e^{w} \; \; \mbox{ and } \; \; \lambda = \frac{1}{2}e^{-u + v}, \; \; \rho = \frac{1}{2}e^{-u - v}.$$ 
Then since $2 \lambda = e^{-u + v}$ we have that $\ln 2 + \ln \lambda = -u + v$, and hence  $\frac{\lambda_{w}}{\lambda} = -(1 + i)/2.$
It follows that
$$a = x + \frac{\lambda x_{w}}{\lambda_{w}} = e^{w} - e^{w}\frac{2}{1 + i} = e^{w}\frac{-1 + i}{1 + i} = i e^{w},$$ 
$$P = \frac{x_{w}}{(a - x)^{2}} = \frac{e^{w}}{e^{2w}} \left(\frac{1 + i}{2}\right)^{2} = \frac{i}{2} e^{-w}.$$ 
Now we take $f(u,v) = \lambda(u,v)(1 + \vert x(u,v)\vert^{2}, 2 \Re(x), 2\Im(x), -1 + \vert x(u,v)\vert^{2})$, for 
$x = e^{u}(\cos v + i \sin v)$ and $\lambda = \frac{1}{2} e^{-u}e^{v}$, we obtain 
$$f(u,v) = e^{v}(\cosh u, \cos v, \sin v, \sinh u).$$ 
Analogous computation using $y$ and $\rho$, it gives us 
$$g(u,v) = e^{-v}(\cosh u, -\cos v, -\sin v, \sinh u).$$ 
Therefore we have that the conformal spacelike surface in $\mathbb H^3$ associated to solution of Equation  (\ref{111}), is given by 
$$h(u,v) = \frac{1}{2}(f(u,v) + g(u,v)) = (\cosh u \cosh v, \sinh v \cos v, \sinh v \sin v, \sinh u \cosh v),$$
which corresponds to $$\varphi = -i e^w \frac{\cosh v + i \sinh v}{\cosh v - i \sinh v}.$$

From a direct computation we can see that: 
$$ds^{2}(h) = \cosh^{2} v (du^{2} + dv^{2}) \; \; \mbox{ and } \; \; \lpr{h(u,v)}{h(u,v)} = -1.$$ 
Now, we know that the mean curvature vector $H(h)$ is a lightlike vector field. In fact, we write $h(u,v) = A(u,v) + B(u,v)$ where 
$$A(u,v) = \cosh v(\cosh u, 0, 0, \sinh u) \; \; \mbox{ and } \; \; B(u,v) = \sinh v (0, \cos v, \sin v, 0).$$ 
It follows that $A_{uu} + A_{vv} = 2 A$ and $B_{uu} + B_{vv} = 2 \cosh v(0, -\sin v, \cos v, 0)$, therefore 
$$\lpr{h_{uu} + h_{vv}}{h_{uu} + h_{vv}} = - 4 \cosh^{2} v + 4 \cosh^{2} v = 0.$$ 
The Gauss curvature of this surface is:  
$$K(u,v) = \frac{-1}{2g_{11}} \Delta \ln g_{11} = 
\frac{-1}{\cosh^{2} v} \frac{\partial}{\partial v}\left(\frac{\sinh v}{\cosh v}\right) = \frac{-1}{\cosh^{4} v}.$$

\vspace{0.1cm}
From these last facts we can see that the surface  $h(M)$ immersed in the
hyperbolic space is locally isometric to the minimal Helicoid (or
Catenoid) of the Euclidean space, which are, respectively,  parametrized by
$$H(u,v) = (0,\sinh v \cos u,\sinh v \sin u, u), \; \; \; \;
C(u,v) = (0,\cosh v \cos u,\cosh v \sin u, v).$$

\end{example}

\vspace{0.3cm}

Next we continue our study of Riccati equation. In fact, we will transform Riccati Equation (\ref{111}) into a  linear partial differential 
equation of second order, for which we will obtain general solutions. This process will give us a way to obtain local solutions of Equation (\ref{111}).

\vspace{0.1cm}
It takes $\phi = \varphi - a$, so Equation (\ref{111}) becomes into  
$$\phi_{w} = P \phi^{2} - a_{w}.$$
 Next, it takes  $\phi = \frac{- X_{w}}{X P}$, with
$X P \ne 0$ and $X$ being a not anti-holomorphic function, 
then we have the following linear partial differential 
equation of second order:
\begin{equation}\label{112}
P X_{ww} - P_{w} X_{w} - a_{w} P^{2} X = 0.
\end{equation}

Now we look for general solutions for Equation (\ref{112}), more generally, for a second order partial differential equation (PDE) 
\begin{equation}\label{46}
y_{ww} + a y_{w} + by = 0.
\end{equation}

We start with the second order ordinary differential equation (ODE) on complex domain, namely, 
$$y'' + a y' + by = 0, \ \ \ a,b \in \mathcal{H}(M),$$
where,  by definition  
$$y'(w) = \lim_{h \rightarrow 0} \frac{y(w + h) - y(w)}{h}.$$ 

Then it is well known that a general solution of ODE can be written as 
$$y(w) = c_{1} y_{1}(w) + c_{2} y_{2}(w) \; \; \mbox{ for } \; \; c_{1},c_{2} \in \complex,$$ 
where $y_{1}(w)$ and $y_{2}(w)$ are two linearly independent solution of the equation. Hence if we take two holomorphic solutions $\{y_{1},y_{2}\}$  of the ODE and assume that 
$c_{1},c_{2}$ are anti-holomorphic  maps, we have that a general solution of the associated 
PDE (\ref{46}),  is given by 
$$y(w) = c_{1}(w) y_{1}(w) + c_{2}(w) y_{2}(w), \; \; \mbox{with} \; \; c_{1},c_{2} \in \overline{\mathcal{H}(M)}.$$

\vspace{0.3cm}

We observe that every solution of ODE is, in fact, a solution of the associated PDE 
 (\ref{46}), but in other way, there exist solutions of Equation (\ref{46}) which are not solutions of ODE.

\vspace{0.3cm}
Before continuing the process of obtaining the solutions of (\ref{111}) , we note that Equation (\ref{112}) has already appeared  in other context when we assume particular values of $a$ and $P$, as follows.

\begin{example}
Let \ $a_{w} = - G(w)$ and $P(w) = 1$ for $w \in M$. Then Equation (\ref{112}) becomes  to $X_{ww} + G(w) X = 0$, well known as 
equation of the Complex Oscillatory Theory. 
\end{example}

\begin{example}
Let \ $a(w) = l w^{-k - 1}$ and $P(w) = w^{k}$ with $k, l$ constant complex numbers, then  Equation (\ref{112}) 
becomes to $X_{ww} - \frac{k}{w} X_{w} + \frac{l(k-1)}{w^{2}} X = 0$. An Euler's equation.
\end{example}
\begin{example}
Taking $a(w) = k w$ and $P(w) = 1$ for all $w \in M$, and for the constant complex number $k$, the equation (15) 
becomes $X_{ww} - k X = 0$. 
\end{example}

\begin{example}
From Example \ref{222}, we have that $a(w) = i e^w$ and $P(w)= \frac{i}{2} e^{-w}$, so Equation (\ref{112}) becomes to
$$
2X_{ww}  - 2X_w + X =0.$$
\end{example}

Then we retake the technique above to construct non anti-holomorphic solutions of Riccati Equation 
(\ref{111}) or of its equivalent equation $\phi_{w} = P \phi^{2} - a_{w},$  as follows.

\vspace{0.2cm}
In general form, let $P,Q$ \and $R$ be three holomorphic maps from $M$ into $\complex$ such that $P(w) \neq 0$ for each $w \in M$, and let 
\begin{equation}\label{47}
\frac{\partial \psi}{\partial w} = P \psi^{2} + Q \psi + R.
\end{equation}
the partial Riccati equation of holomorphic parameters.

\begin{prop}\label{46.1}
Let $x,y \in \mathcal{H}(M)$ two distinct holomorphic solution of the partial Riccati's equation 
of holomorphic parameters (\ref{47}). \ Then  a smooth map $\psi$ from $M$ into $\complex$ satisfy the partial Riccati's equation (\ref{47})  if, and only if,
\begin{equation}\label{48}
\frac{\psi - x}{\psi - y} = k \theta(w) e^{\int_{w_{0}}^{w} P(\xi)(x(\xi) -y(\xi)) d \xi} \; \mbox{ where } \; 
k \in \complex \; \; \ \mbox{and} \; \; \theta \in \overline{\mathcal{H}}(M).
\end{equation}
\end{prop}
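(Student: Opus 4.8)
The plan is to reduce the partial Riccati equation (\ref{47}) to a first-order linear condition on the cross-ratio-type quantity $u = \frac{\psi - x}{\psi - y}$, and then to integrate that condition using the simple connectedness of $M$. The whole argument rests on a single algebraic equivalence that settles both implications at once.

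First I would introduce $u = \frac{\psi - x}{\psi - y}$, working locally where $\psi \neq x$ and $\psi \neq y$ so that $u$ and $u^{-1}$ are finite and nonzero, and compute its logarithmic $w$-derivative
$$\frac{u_w}{u} = \frac{\psi_w - x_w}{\psi - x} - \frac{\psi_w - y_w}{\psi - y}.$$
Since $x$ and $y$ solve (\ref{47}), I have $x_w = P x^2 + Q x + R$ and $y_w = P y^2 + Q y + R$. Substituting these, clearing the denominators $(\psi - x)(\psi - y)$, and using $x \neq y$, a short computation shows that the identity $\frac{u_w}{u} = P(x - y)$ is \emph{equivalent} to $\psi_w = P\psi^2 + Q\psi + R$. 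When $\psi$ is already assumed to solve (\ref{47}), the same computation collapses at once, because $\psi_w - x_w = (\psi - x)[P(\psi + x) + Q]$ gives $\frac{\psi_w - x_w}{\psi - x} = P(\psi+x)+Q$ and likewise for $y$, whose difference is $P(x - y)$. Thus I will have replaced (\ref{47}) by the equivalent condition $\frac{u_w}{u} = P(x - y)$.

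Next I would integrate. Because $P$, $x$, $y$ are holomorphic and $M$ is simply connected, the holomorphic primitive $\Theta(w) = \int_{w_0}^w P(\xi)(x(\xi) - y(\xi))\, d\xi$ exists and satisfies $\Theta_w = P(x - y)$ and $\Theta_{\overline w} = 0$. The condition $\frac{u_w}{u} = P(x-y) = \Theta_w$ then reads $\left(u\, e^{-\Theta}\right)_w = 0$, so $u\, e^{-\Theta}$ is annihilated by $\partial_w$ and is therefore anti-holomorphic. Writing this anti-holomorphic factor as $k\,\theta(w)$ with $k \in \complex$ and $\theta \in \overline{\mathcal H}(M)$ yields exactly formula (\ref{48}). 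Conversely, if $\psi$ is given by (\ref{48}), then differentiating gives $\frac{u_w}{u} = \frac{(k\theta)_w}{k\theta} + \Theta_w = 0 + P(x-y)$, using that $k\theta$ is anti-holomorphic, and the equivalence of the previous step forces $\psi$ to satisfy (\ref{47}). This closes both directions simultaneously.

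The main point requiring care is not the integration but the algebraic equivalence in the first step: I must verify that, after substituting $x_w, y_w$ and clearing denominators, all terms involving $P(x+y)\psi$ and $Pxy$ cancel between the two sides, leaving precisely $\psi_w = P\psi^2 + Q\psi + R$, so that the reduction is genuinely an equivalence and not merely a one-way implication. A secondary technical point is to phrase the manipulations through $u_w/u$ and $\left(u\, e^{-\Theta}\right)_w$ rather than through $\ln u$ directly, so as to avoid branch ambiguities of the complex logarithm, and to record that the degenerate solutions $\psi \equiv x$ and $\psi \equiv y$ correspond to the limiting values $k\theta \equiv 0$ and $u \equiv \infty$.
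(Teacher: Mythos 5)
Your proposal is correct and follows essentially the same route as the paper: form the quotient $\frac{\psi-x}{\psi-y}$, take its logarithmic $w$-derivative, use that $x,y$ solve (\ref{47}) to reduce the Riccati equation to $\frac{u_w}{u}=P(x-y)$, and integrate against the holomorphic primitive to produce the anti-holomorphic factor $k\theta$. Your refinements (working with $(u\,e^{-\Theta})_w=0$ to avoid branch issues of $\Log$, and packaging both implications into one algebraic equivalence) are sound but do not constitute a different method.
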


\begin{proof}
Since $\psi$ and $x$ are solutions of (\ref{47}), it follows that $\psi_{w} - x_{w} = P(\psi - x)(\psi + x) + Q(\psi - x)$.  Hence since $y$ is also solution, we have that 
$$\frac{\psi_{w} - x_{w}}{\psi - x} - \frac{\psi_{w} - y_{w}}{\psi - y} = P(x - y)$$
and this last is equivalent to  
$$\frac{\partial}{\partial w} \left( \Log \left( \frac{\psi - x}{\psi - y} \right) - \int P(x - y) dw \right) =0.$$
Now, since $x,y,P$ are holomorphic maps from $M$ into $\complex$, it follows the direct statement.

For converse, we apply the logarithmic derivative to (\ref{48}) and replace the values of $x_{w}$ and  $y_{w}$ in the result, 
to obtain Equation (\ref{47}). 
\end{proof}

We finish this section showing that every conformal immersion $(M,F)$ of a spacelike surface in $\mathbb L^4$ in which the PDE (\ref{39.1}) is satisfied, it is congruent to a Bryant immersion $(M,h)$ in $\mathbb H^3$ whose integration factor $\mu$ and maps $a$ and $b$ are the same of $F$. For the proof we use Theorem \ref{16}, Lemma \ref{14}, Proposition  \ref{32}, Proposition \ref{392} and Lemma \ref{54}.

\begin{theor}\label{96} 
 Let $(M,F)$ be a conformal immersion such that the maps $a(F), b(F)$ satisfy the PDE equation (\ref{39.1}), with $a(F) \in \mathcal H(M)$. Then,
 
 (1) The mean curvature vector $H(F)$ is a lightlike vector fields along $S = F(M)$. 
 
 (2) There exists a Bryant immersion $(M, h)$ from $M$ in the hyperbolic space $\mathbb H^3$ such that
$$
\mu (h) = \mu (F), a(h) = a(F), \ \ \  and  \ \ \ b(h) = b(F).
$$
Moreover, $S$ and $h(M)$ are congruent each other by the translation vector $\vec v =
F(w_0) - h(w_0)$.
\end{theor}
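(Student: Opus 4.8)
The plan is to reduce the construction of the Bryant immersion to the machinery already developed for conformal immersions into the light cone, and then to package the two light-cone immersions into a single immersion of $\mathbb H^3$ using Proposition \ref{392}. First I would observe that statement (1) is immediate from the discussion opening Section 5: since $a(F)\in\mathcal H(M)$ and the PDE (\ref{39.1}) forces $b(F)$ to be neither holomorphic nor anti-holomorphic (the condition $b_w b_{\overline w}\neq 0$), Theorem \ref{1.11} gives $f_{w\overline w}\neq 0$ with exactly one of $a,b$ holomorphic, which is precisely the criterion stated there for $H(F)$ to be a nonzero lightlike vector field along $S$.

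For statement (2), the strategy is to manufacture two holomorphic solutions of the Riccati equation (\ref{111}) and feed them into Lemma \ref{54} and Proposition \ref{392}. Setting $\varphi=1/\overline{b(F)}$, the hypothesis says $\varphi$ solves the Riccati equation $\varphi_w/(\varphi-a)^2=P$ with $a=a(F)\in\mathcal H(M)$ and $P=-\overline{b(F)}_w/(1-a\overline{b(F)})^2\in\mathcal H(M)$. I would then pick two distinct \emph{holomorphic} solutions $x,y$ of this same Riccati equation (these exist because, by the linearization $\phi=-X_w/(XP)$ described after Example \ref{222}, holomorphic solutions correspond to solutions of the holomorphic linear ODE (\ref{112}), whose solution space is two-dimensional). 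With $x,y$ in hand, Lemma \ref{54} supplies real-valued factors $\lambda,\rho$ so that the light-cone immersions $f(w)=\lambda L(x)$ and $g(w)=\rho L(y)$ satisfy the compatibility equations (\ref{30}), namely $a(f)=a(g)=a\in\mathcal H(M)$ and $\lambda\rho(x-y)(\overline x-\overline y)=1$. Proposition \ref{392} then certifies that $h(w)=\tfrac12(f(w)+g(w))$ is a conformal immersion into $\mathbb H^3$ with lightlike mean curvature vector — i.e. a Bryant immersion.

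It remains to match the Weierstrass data of $h$ with that of $F$. By Proposition \ref{32} and Lemma \ref{14}, both $f$ and $g$ produce the common holomorphic function $a(f)=a(g)=a=a(F)$, and a direct computation of $b(h)$ from $h_w=\tfrac12(f_w+g_w)$ using the formulas of Lemma \ref{14} item (1) should recover $b(h)=1/\overline\varphi=b(F)$, since $\varphi$ was defined from the same Riccati data. For the integration factor, I would invoke Theorem \ref{16}(2): because $b(F)$ is not holomorphic, any two solutions of equation (\ref{2}) sharing the same $a,b$ differ by a nonzero real constant, so after rescaling $\lambda,\rho$ (which rescales $f,g$ and hence $h$) one arranges $\mu(h)=\mu(F)$. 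With $a,b,\mu$ all matched, $F_w$ and $h_w$ coincide, so $F-h$ has vanishing $w$-derivative and, being real, is the constant translation vector $\vec v=F(w_0)-h(w_0)$, giving the asserted congruence.

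The main obstacle I anticipate is not the existence of the holomorphic solutions $x,y$ (guaranteed by the ODE theory) but the bookkeeping that forces $b(h)=b(F)$ and $\mu(h)=\mu(F)$ \emph{simultaneously} after the rescaling freedom in $\lambda,\rho$ has been used. One must check that the single real-constant adjustment permitted by Theorem \ref{16}(2) is enough to normalize both $\mu$ and the light-cone-vector scaling consistently, and that the choice of $x,y$ (as opposed to any other pair of Riccati solutions) does not change $b(h)$ — this relies on the fact that $b(h)$ depends only on $\varphi$, which is determined by the original Riccati data rather than by the particular solution pair chosen.
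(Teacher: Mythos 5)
Your part (1) and the overall skeleton of part (2) (two holomorphic solutions $x,y$ of the Riccati equation (\ref{111}), Lemma \ref{54}, Proposition \ref{392} to produce $h=\tfrac12(f+g)$, then matching of the data and integration) coincide with the paper's. The genuine gap is in the matching step, exactly at the point you flagged and then dismissed. With $\lambda,\rho$ taken from Lemma \ref{54}, the function $b(h)$ is \emph{not} automatically $b(F)$: writing $1/\overline{b(h)}=\phi$ and computing from $h_w=\tfrac12\bigl(\overline x\lambda_w W(a,1/\overline x)+\overline y\rho_w W(a,1/\overline y)\bigr)$, one finds $\frac{\phi-x}{\phi-y}=-\frac{\rho_{\overline w}}{\lambda_{\overline w}}=-\frac{\rho}{\lambda}\cdot\frac{\overline a-\overline y}{\overline a-\overline x}$, so $\phi$ depends on the chosen pair $(x,y)$ and on the ratio $\rho/\lambda$. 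By Proposition \ref{46.1}, the solutions of (\ref{111}) with the same holomorphic data $(a,P)$ form a family parametrized by an arbitrary anti-holomorphic function, whereas varying the free real constants of Lemma \ref{54} moves $\phi$ only through a one-real-parameter subfamily; the given $\varphi=1/\overline{b(F)}$ will in general not lie in it. Hence ``$b(h)$ depends only on $\varphi$'' is false, and Theorem \ref{16}(2) cannot repair this: a common rescaling of $\lambda$ and $\rho$ changes $\lpr{h}{h}$ and takes $h$ off $\hyper$, and the single remaining ratio $\rho_0/\lambda_0$ (after imposing $\lambda\rho\vert x-y\vert^2=1$) cannot normalize $b(h)$ and $\mu(h)$ simultaneously.

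This is precisely where the paper does the real work: it forces $F_w=h_w$, solves the resulting linear system for $\eta=\overline x\lambda_w$ and $\xi=\overline y\rho_w$ in terms of $\mu(F)$ and $\varphi$, proves that $z=-\frac{\varphi-x}{\varphi-y}\cdot\frac{\overline x-\overline a}{\overline y-\overline a}$ is a positive real-valued function (formula (\ref{122})), and only then defines $\lambda=\frac{1}{\vert x-y\vert\sqrt z}$ and $\rho=\frac{\sqrt z}{\vert x-y\vert}$ (formula (\ref{123})). This choice depends on $\varphi$, i.e.\ on $b(F)$ itself, not merely on the Riccati parameters $(a,P)$; one then checks that it still satisfies conditions (\ref{30}) (so Proposition \ref{392} applies), that $\phi=\varphi$, hence $b(h)=b(F)$, and that $\eta+\xi=2\mu(F)$, hence $\mu(h)=\mu(F)$, after which the congruence by $\vec v=F(w_0)-h(w_0)$ follows as you say. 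Your proposal omits this $\varphi$-adapted construction of $\lambda,\rho$ (including the positivity of $z$, which uses that $\varphi$ arises from an actual conformal immersion $F$ satisfying the compatibility equations (\ref{1}) and (\ref{2}), not merely from a solution of the PDE), so as written the key identities $b(h)=b(F)$ and $\mu(h)=\mu(F)$ remain unproved.
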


\begin{proof}
(1) \ Since $a(F) \in \mathcal{H}(M)$ and $b \notin \mathcal{H}(M)$, it follows that the mean curvature vector $H(F)$ is a lightlike vector field.

\vspace{0.3cm}
(2) \  From Equation (\ref{39.1}) we have the PDE Riccati Equation 
$$P(w) = \frac{\varphi_{w}}{(a(F) - \varphi)^{2}} \; \; \mbox{ for } \; \; \varphi(w) = \frac{1}{\overline{b(F)(w)}}.$$ 
Now it takes two linearly independent holomorphic solutions $\{x,y\}$ of the Riccati Equation above. Then we find from Lemma \ref{54} that for 
$$\lambda = \lambda_{0} e^{2 \Re \int_{w_{0}}^{w} P(\xi)(a(\xi) - x(\xi)) d \xi} \; \; \mbox{ and } \; \; 
\rho = \rho_{0} e^{2 \Re \int_{w_{0}}^{w} P(\xi)(a(\xi) - y(\xi)) d \xi},$$ 
the conditions (\ref{30}) are satisfied. Hence we have that
$$h(w) = \frac{1}{2}(f(w) + g(w))$$
 with $f(w) = \lambda(w) L(x(w)), \ g(w) = \rho(w) L(y(w))$, defines a conformal immersion
from $M$ into the hyperbolic space $\mathbb H^3$, with lightlike mean curvature vector.   Initially, we note that $a(f) = a(g)$, and hence one gets that $a(F) = a(f) = a(g)$. From expression of
$a(h)$, it follows that $a(h) = a(f) = a(g)$ and hence we have $a(h) = a(F)$. Henceforward we denote it simply by $a$.  

\vspace{0.2cm}
 Next our goal is to choice conveniently $\lambda$ and $\rho$ such that  $\mu(h) = \mu(F)$ and $b(h) = b(F)$.
 
 \vspace{0.2cm}
If we denote $b(h) = \frac{1}{\overline{\phi}}$, then we already know that  
$$f_{w} = \eta W(a,1/\overline{x}), \; \; g_{w} = \xi W(a,1/ \overline{y}) \; \; \ \text {and} \ \ \ h_{w} = \mu(h) W(a,1/ \overline{\phi}),$$
hence  we will require that 
$$\mu(h) = \mu(F) \; \; \mbox{ and } \; \; \phi = \varphi.$$

 For this purpose, we go through to get explicit expressions of $\lambda$ and $\rho$ which satisfy the conditions.
 We force that  $F_w = h_w$, that means, 
$$2 \mu(F)  W(a, 1/{\overline{\varphi}}) = \eta W(a, 1/{\overline{x}}) + \xi W(a, 1/{\overline{y}}).$$ 
Then from this assumption we get the system 
\begin{equation}
\mu(F) = \frac{\eta + \xi}{2}  \; \;  \ \ \ \ \mbox{ and } \; \; \ \ \ \frac{\mu (F)}{\overline{\varphi}} = \frac{1}{2} \left(\frac{\eta}{\overline{x}} + 
\frac{\xi}{\overline{y}} \right),
\end{equation} 
which has solution given by
$$
 \frac{\xi}{\overline{y}} = 2 \frac{\mu(F)}{\overline{\varphi}} \;
\frac{\overline{\varphi} - \overline{x}}{\overline{y} - \overline{x}} \; \; \; \ \ \ 
\mbox{ and } \; \; \;  \ \ \ 
 \frac{\eta}{\overline{x}} = 2 \frac{\mu(F)}{\overline{\varphi}} \;
\frac{\overline{\varphi} - \overline{y}}{\overline{x} - \overline{y}}.
$$ 

Now  using that $\xi = \overline{y} \rho_{w}$ and 
$\eta = \overline{x} \lambda_{w}$, we get that 
$$ \rho_{w} =  2 \frac{\mu(F)}{\overline{\varphi}} \;
\frac{\overline{\varphi} - \overline{x}}{\overline{y} - \overline{x}} \ \ \ \ \ \text{and} \ \ \ \ \
\lambda_{w} =  2 \frac{\mu(F)}{\overline{\varphi}} \;
\frac{\overline{\varphi} - \overline{y}}{\overline{x} - \overline{y}},
$$
therefore, since $\lambda$ and $\rho$ are real valued functions it follows that
\begin{equation}
\frac{\rho_{\overline{w}}}{\lambda_{\overline{w}}} = - \left( \frac{\varphi - x}{\varphi - y} \right).
\end{equation}
Now, from Lemma \ref{14} item 1. and Proposition \ref{32}, one has that
$$\xi_{\overline{w}} = {\overline{a}} \frac{\rho_{w} \rho_{\overline{w}}}{\rho} 
\; \; \; \ \ \ \  \text{and} \; \; \; \ \ \ \eta_{\overline{w}} = 
{\overline{a}} \frac{{{\lambda_{w}}}{\lambda_{\overline{w}}}}{\lambda},$$
which implies that $\xi_{\overline{w}} / {\overline{a}}$ and $\eta_{\overline{w}} / {\overline{a}}$  
are positive real valued maps. Now we use the fact that 
$\frac{\rho_{w}}{\rho} = P(a - y)$ and $\frac{\lambda_{w}}{\lambda} = P(a - x)$, for getting 
$$\frac{\xi_{\overline{w}}}{\eta_{\overline{w}}} = \frac{\rho_{\overline{w}}}{\lambda_{\overline{w}}} 
\frac{\rho_{w}}{\rho} \frac{\lambda}{\lambda_{w}} = - \left( \frac{\varphi - x}{\varphi -y}\right)\left(\frac{y-a}{x-a}\right),$$
and therefore that  
$- \left(\frac{\varphi - x}{\varphi -y} \right)  \left(\frac{y-a}{x-a} \right)$ is positive real valued map. So, we have proved that  
\begin{equation}\label{122}
z = - \left( \frac{\varphi -x}{\varphi - y} \right)\left(\frac{\overline{x} - \overline{a}}{\overline{y} - \overline{a}} \right)
\end{equation}
is also positive real valued map. 

Finally then we take $\lambda(w)$ and $\rho(w)$ given by the formulas 
\begin{equation}\label{123}
\lambda(w) = \frac{1}{\vert x(w) - y(w) \vert \sqrt{z(w)}}, \ \ \ \ \ \ \ \rho(w) = \frac{\sqrt{z(w)}}{\vert x(w) - y(w) \vert}
\end{equation}
and we observe that these $\lambda$ and $\rho$ are wanted.

In fact, they satisfy the conditions 
$$\lambda \rho \vert x - y \vert^{2} = 1, \ \ \ \frac{\rho(w)}{\lambda(w)} = z(w),$$
$$\frac{\rho_{w}}{\rho} = P(a - y), \ \ \ \ \frac{\lambda_{w}}{\lambda} = P(a - x), \ \ \ \ \ x - y + (\frac{\lambda x_{w} }{ \lambda_{w}}) - (\frac{\rho y_{w}} { \rho_{w}}) = 0,$$

and so, in particular Formula (\ref{122}) and (\ref{123}) are consistent with Proposition \ref{392}.

\vspace{0.2cm}

Now from $h_{w} = (f_{w} +  g_{w}) / 2$, \ $f_{w} = \eta W(a,1/{\overline{x}})$ and $g_{w} = \xi W(a,1/{\overline{y}})$, it follows that
$$b(h) = \frac{1}{\overline{ \phi}} = \frac{\lambda_w + \rho_w}{\bar x \lambda_w + \bar y \rho_w}$$
and therefore, using Formula (25), we get that 
 
$$\phi = \frac{\lambda_{\overline{w}} x + \rho_{\overline{w}} y}{\lambda_{\overline{w}} + \rho_{\overline{w}}} = 
\frac{x + (\rho_{\overline{w}} / \lambda_{\overline{w}})y}{1 + (\rho_{\overline{w}} / \lambda_{\overline{w}})} = \varphi.$$ 

\vspace{0.3cm}

Since  $\eta = \bar x \lambda_w$ and $\xi = \bar y \rho_w$, it follows that 
$$\eta + \xi = \bar x \left( 2 \frac{\mu(F)}{\overline{\varphi}} \;
\frac{\overline{\varphi} - \overline{y}}{\overline{x} - \overline{y}}\right) + \bar y \left(2 \frac{\mu(F)}{\overline{\varphi}} \;
\frac{\overline{\varphi} - \overline{x}}{\overline{y} - \overline{x}}\right) = 2 \mu(F),$$
which implies that $\mu(h) = \mu(F)$.

\vspace{0.3cm}

Finally, it follows from Theorem \ref{16} that $S$ and $h(M)$ are congruent each other by the translation $\vec{v} = F(w_{0}) - h(w_{0})$ 
since 
$$F(w) = \left(h(w_{0}) + 2\Re \int_{w_{0}}^{w} \mu(h)(\xi) W(a(h)(\xi),b(h)(\xi)) d \xi\right) + \vec{v}.$$  
\end{proof}

\vspace{0.3cm}

{\bf Acknowledgments}

\vspace{0.3cm}
The first author's research was supported by Projeto Tem\'atico Fapesp n. 2016/23746-6. S\~ao Paulo. Brazil.


\begin{thebibliography}{99}
 \bibitem{AV} A. Asperti, J. Vilhena, \textit{Bjorling problem for spacelike, zero mean curvature surfaces in $\mathbb L^4$}, 
 J. Geom. Phys.  56  (2006) 196-213.  
 \bibitem{B} R. Bryant, \textit{Surfaces of Mean Curvature One in Hyperbolic Space}, Asterisque 154-155  (1987) 321-347.
 \bibitem{GG} C.C. G\'oes, M.Elisa, E.L. Galv\~ao, \textit{A Weierstrass type representation for surfaces in Hyperbolic space
 with mean curvature one}, Note di Matematica. 18 (1998) 43-61.
  \bibitem{DPM} M.P. Dussan, A.P. Franco Filho, M. Magid, \textit{The Bjorling problem for timelike minimal surfaces in
  $\mathbb R^4_1$}, Annali di Matematica Pura ed Applicata. 196 (2017) 1231-1249. 
  \bibitem{F} S. Fujimori, \textit{Minimal surfaces in Euclidean 3-space and their mean curvature 1 Cousins in Hyperbolic 3-space}, 
  Annals of the Brazilian Academy of Sciences. 75 (2003) 271-278. 
   \bibitem {H}  E. Hille, \textit {Ordinary Differential Equations in the Complex Domain}, Dover Publication, INC.  1976. 
  \bibitem{HO1} D.A. Hoffman, R. Osserman, \textit{The Geometry of the generalized Gauss map}, Mem. Amer. Math. 
  Soc. 236 (1980) 1-105. 
  \bibitem{HO2} D.A. Hoffman, R. Osserman, \textit{The Gauss map of surfaces in $\mathbb{R}^{n}$}, 
  J. Differential Geometry 18 (1983) 733-754.
 \bibitem {LR}  N. Levinson, R.M. Redheffer, \textit {Complex Variables}, Holden-Day Series in Mathematics.  1970.
\bibitem{L} H.L. Liu, \textit{Surfaces in lightlike cone}, J. Math. Anal. Appl. 325 (2007) 1171-1181.
\bibitem{LJ} H.L. Liu, S.D.  Jung, \textit{Hypersurfaces in lightlike cone}, J. Geom. Phys. 58 (2008) 913-922.
\bibitem{LUY} H.L. Liu, M. Umehara, K. Yamada, \textit{The duality of conformally flat manifolds}, Bull. Braz. Math. Soc. 
42 (2011) 131-152.
\bibitem{PRR} F.J. Palomo, F.J. Rodriguez, A. Romero, \textit{Compact spacelike surfaces in four-dimensional Lorentz-Minkowski
spacetime with a non-degenerate lightlike normal direction}, ArXiv: 1604.06315v1. 
\bibitem{PR} F.J. Palomo, A. Romero, \textit{On spacelike surfaces in four-dimensional Lorentz-Minkowski spacetime through
a light cone}, P. Roy. Soc. Edinb. A Mat. 143 (2013) 881- 892.
 \end{thebibliography}
\end{document}